\newtheorem{definition}{Definition}
\newtheorem{assum}{Assumption}
\newtheorem{thm}{Theorem}
\newtheorem{lem}{Lemma}
\newtheorem{prop}{Proposition}
\newcommand{\st}{\mathop{\textrm{\normalfont s.t.}}}
\newcommand{\red}{\color{black}}
\newcommand{\mR}{\mathbb{R}}
\newcommand{\mZ}{\mathbb{Z}}
\newcommand{\mP}{\mathcal{P}}
\newcommand{\mC}{\mathcal{C}}
\newcommand{\mL}{\mathcal{L}}
\newcommand{\mQ}{\mathcal{Q}}
\def\P{{\mathcal P}}
\def\0{{\boldsymbol 0}}
\def\ba{{\boldsymbol{a}}}
\def\N{{\mathcal N}}
\def\bp{{\boldsymbol{p}}}
\def\bq{{\boldsymbol{q}}}
\def\bv{{\boldsymbol{v}}}
\def\bx{{\boldsymbol{x}}}
\def\br{{\boldsymbol{r}}}
\def\bs{{\boldsymbol{s}}}
\def\bv{{\boldsymbol{v}}}
\def\bu{{\boldsymbol{u}}}
\def\bw{{\boldsymbol{w}}}
\def\bd{{\boldsymbol{d}}}
\def\bl{{\boldsymbol{l}}}
\def\be{{\boldsymbol{e}}}
\def\bz{{\boldsymbol{z}}}
\def\blambda{{\boldsymbol{\lambda}}}
\def\bzeta{{\boldsymbol{\zeta}}}
\def\bxi{{\boldsymbol{\xi}}}
\def\tlambda{{\boldsymbol{\lambda}}^\star}
\def\tx{{\boldsymbol{x}}^\star}
\def\tu{{\boldsymbol{u}}^\star}
\def\tz{{\boldsymbol{z}}^\star}
\def\tw{{\boldsymbol{w}}^\star}
\def\bblambda{\bar{\boldsymbol{\lambda}}}
\def\bbx{\bar{\boldsymbol{x}}}
\def\bbu{\bar{\boldsymbol{u}}}
\def\bbw{\bar{\boldsymbol{w}}}
\def\tp{{\boldsymbol{p}}^\star}
\def\tq{{\boldsymbol{q}}^\star}
\def\tzeta{{\boldsymbol{\zeta}}^\star}
\def\txi{{\boldsymbol{\xi}}^\star}
\def\tzetac{{\boldsymbol{\zeta}}^{c \star}}
\def\txic{{\boldsymbol{\xi}}^{c \star}}
\def\tpc{{\boldsymbol{p}}^{c \star}}
\def\tqc{{\boldsymbol{q}}^{c \star}}
\newcommand{\rbr}[1]{\left(#1\right)}
\newcommand{\cbr}[1]{\left\{#1\right\}}
\def\diag{{\text{diag}}}
\begin{document}
\title{On the Convergence of Overlapping Schwarz \\ Decomposition for Nonlinear Optimal Control}

\author{Sen Na, Sungho Shin, Mihai Anitescu, and Victor M. Zavala
\thanks{S. Na and S. Shin equally contributed to this work.}
\thanks{S. Na is with the Department of Statistics, University of Chicago, 5747 South Ellis Avenue, Chicago, IL 60637, USA (e-mail: senna@uchicago.edu)}
\thanks{S. Shin is with the Department of Chemical and Biological Engineering, University of Wisconsin-Madison, Madison, WI 53706 USA (e-mail: sungho.shin@wisc.edu)}
\thanks{M. Anitescu is with the Mathematics and Computer Science Division, Argonne National Laboratory, Lemont, IL 60439, USA, and also with the Department of Statistics, University of Chicago, Chicago, IL 60637, USA (e-mail: anitescu@mcs.anl.gov)}
\thanks{V. M. Zavala is with the Department of Chemical and Biological Engineering, University of Wisconsin-Madison, Madison, WI 53706 USA and also with the Mathematics and Computer Science Division, Argonne National Laboratory, Lemont, IL 60439, USA (e-mail: victor.zavala@wisc.edu)}}

\maketitle

\begin{abstract}

We study the convergence properties of an overlapping Schwarz decomposition algorithm for solving nonlinear optimal control problems (OCPs). The algorithm decomposes the~time domain into a set of overlapping subdomains, and~solves all~subproblems defined over subdomains in parallel. The~convergence is attained by updating primal-dual information~at~the boundaries of overlapping subdomains. We show that the algorithm exhibits local linear convergence, and that the convergence rate improves exponentially with the overlap size. We also establish global convergence results for a general quadratic programming, which enables the application of the Schwarz scheme inside~second-order optimization algorithms (e.g., sequential quadratic programming). {\red The theoretical foundation~of our convergence analysis} is a~sensitivity result of {\red nonlinear} OCPs, which we call ``exponential decay of sensitivity" (EDS). Intuitively, EDS states that the impact of perturbations at domain~boundaries (i.e. initial and terminal time) {\red on the solution} decays exponentially as one moves into~the domain. {\red Here, we expand a previous analysis available in the literature~by showing} that EDS holds for {\red both primal and dual solutions}~of nonlinear OCPs, under uniform second-order sufficient condition, controllability condition, and boundedness condition. We conduct experiments with a quadrotor motion planning problem and a PDE control problem {\red to validate our theory}; and show that~the approach is significantly more efficient than ADMM and as efficient as the centralized solver Ipopt. 
\end{abstract}

\begin{IEEEkeywords}
Optimal Control;  Nonlinear Programming; Decomposition Methods; Overlapping; Parallel algorithms
\end{IEEEkeywords}
\IEEEpeerreviewmaketitle

\section{Introduction}\label{sec:1}

We study the nonlinear optimal control problem (OCP):
\begin{subequations}\label{pro:1}
\begin{align}
\min_{\{\bx_k\},\{\bu_k\}}\  & \sum_{k=0}^{N-1} g_k(\bx_k, \bu_k) + g_N(\bx_N), \label{pro:1a}\\
\st\;\; & \bx_{k+1} = f_k(\bx_k,\bu_k)  \hskip0.43cm (\blambda_k), \label{pro:1b}\\
& \bx_0 = \bar{\bx}_0  \hskip1.8cm (\blambda_{-1}), \label{pro:1c}
\end{align}
\end{subequations}
where $\bx_k\in\mR^{n_x}$ are the state variables; $\bu_k\in\mR^{n_u}$ are the control variables; $\blambda_k\in\mR^{n_x}$ are the dual variables associated with the dynamics \eqref{pro:1b}; $\blambda_{-1}\in\mR^{n_x}$ are the dual variables associated with the initial conditions \eqref{pro:1c}; $g_k: \mR^{n_x}\times \mR^{n_u}\rightarrow \mR$ ($g_N:\mR^{n_x}\rightarrow\mR$) are the cost functions; $f_k:\mR^{n_x}\times \mR^{n_u}\rightarrow \mR^{n_x}$ are the dynamical constraint functions; $N$ is the horizon length; and $\bbx_0\in\mR^{n_x}$ is the given initial state. We assume that $f_k$, $g_k$ are twice continuously differentiable, nonlinear, and~possibly nonconvex; as such, \eqref{pro:1} is a nonconvex nonlinear program (NLP). The problem of interest has been studied extensively in the context of model predictive control \cite{Rawlings2017Model} with applications in chemical process control \cite{Qin2003survey}, energy systems \cite{Kumar2020Stochastic}, production planning\cite{Jackson2003Temporal}, autonomous vehicles \cite{Falcone2007Predictive}, power systems \cite{Shanechi2003General}, supply chains \cite{Dunbar2007Distributed}, and neural networks \cite{Huang2003Neural}.

In this work, we are interested in solving OCPs with a large number of stages $N$. Such problems arise in the settings with long horizons, fine time discretization resolutions, and multiple timescales \cite{Kumar2018Handling}. Temporal decomposition provides an approach to deal with such problems. In this approach, one partitions~the time domain $[0,N]$ into a set of subdomains $\{[m_i,m_{i+1}]\}_{i=0}^{T-1}$. One then solves {\red more tractable subproblems} over subdomains in parallel, and their solution trajectories are concatenated~by using a coordination mechanism. Traditional coordination~mechanisms include Lagrangian dual decomposition \cite{Beccuti2004Temporal}, alternating direction method of multipliers (ADMM) \cite{Boyd2010Distributed}, dual dynamic programming \cite{Kumar2018Stochastic}, and Jacobi/Gauss-Seidel~methods~\cite{Zavala2016New}. These decomposition approaches offer flexibility in that they can be implemented in different types of computing hardware that might have limitations on memory and processor speeds. This is critical because the performance of centralized nonlinear optimization solvers (e.g., Ipopt) degrades rapidly in resource-constrained computing environments \cite{Chiang2017Augmented}. Unfortunately, while Lagrangian dual decomposition, ADMM, and dual dynamic programming are guaranteed to converge under a variety of OCP settings, they often exhibit slow convergence \cite{Kozma2014Benchmarking}. This highlights {\red the existence of} a fundamental trade-off between the flexibility offered by distributed solvers and the efficiency offered by centralized solvers.
    
Direct decomposition approaches have also been studied for convex OCPs with long horizons. Specifically, such approaches have been used to decompose linear algebra systems inside interior-point solvers \cite{Nielsen2014O, Nielsen2015parallel, Laine2019Parallelizing, Wright1990Solution, Rao1998Application, Wan2019Parallel, Kang2015Nonlinear, Frison2013Efficient}. They also offer flexibility to enable limited-resource-hardware implementations and, since the methods are direct (as opposed to iterative), they do not suffer from convergence issues. However, direct approaches rely on  reduction procedures (they are block elimination techniques), and such procedures suffer from scalability issues. For instance, parallel cyclic reduction, Schur, and Riccati decompositions do not scale well with the number of states and/or control variables.  Moreover, we also highlight that iterative approaches such as ADMM and Lagrangian dual decomposition often offer more flexibility than direct decomposition methods in that the amount of communication needed is limited (thus preserving data privacy). 

A recent study \cite{Barrows2014Time} has empirically tested the effectiveness of a different decomposition paradigm for OCPs. Specifically, the authors performed numerical tests with a {\it temporal decomposition scheme with overlaps} (see Fig. \ref{fig:schematic}).  Here, overlapping subdomains $\{[n_i^1,n_i^2]\}_{i=0}^{T-1}$ are constructed by expanding the non-overlapping subdomains $\{[m_i,m_{i+1}]\}_{i=0}^{T-1}$ by $\tau$ stages on the left and right boundaries. Subproblems on the expanded subdomains are solved {\red in parallel}, and the resulting solution trajectories are concatenated by discarding the pieces of the trajectory in the overlapping regions. The authors observed~that, as \textit{the size of the overlap} increases, the approximation error of the concatenated solution trajectory drops rapidly. {\red However, no quantitative analysis was provided. Subsequent work \cite{Xu2018Exponentially} provided the first rigorous error analysis} of such overlapping decomposition scheme. The authors~proved~that,~for~{\red strongly~convex OCPs  with linear dynamics and positive-definite quadratic stage costs} that satisfy uniform controllability and boundedness conditions, the error of the concatenated trajectory decreases {\em exponentially} in $\tau$. This result requires a sensitivity property for convex OCPs that we call ``exponential decay of sensitivity" (EDS). This property says that the impact of parametric perturbations on the primal {\red solution} trajectory $\{(\tx_k,\tu_k)\}_k$ decays exponentially as one moves away from the perturbation stage. Unfortunately,~{\red the analysis in \cite{Xu2018Exponentially} does not apply for~the general nonlinear OCP \eqref{pro:1} and thus has limited applicability. 
Furthermore, we emphasize that the sensitivity on the dual solution (even for convex case) is not resolved in that work, and that the decomposition scheme analyzed there is only an approximation scheme (not a convergent algorithm).  

}

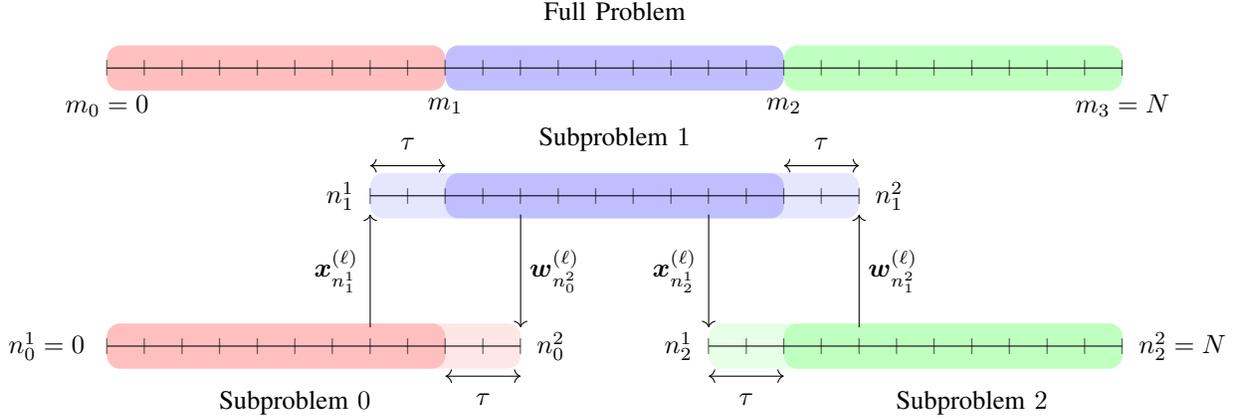
\begin{figure*}[t]
\centering
\begin{tikzpicture}
\def\shift{.8};
\fill[rounded corners=5pt,red!25!white,line width=3pt](0,.2-\shift) rectangle ++(4.5,.6);
\fill[rounded corners=5pt,blue!25!white,line width=3pt](4.5,.2-\shift) rectangle ++(4.5,.6);
\fill[rounded corners=5pt,green!25!white,line width=3pt](9,.2-\shift) rectangle ++(4.5,.6);
\foreach \x in {1,...,28}
\node[scale=.6]  at (\x/2-1/2,.5-\shift) {$|$};
\draw (0,.5-\shift)--(13.5,.5-\shift);
\node at (0,0-\shift) {$m_0=0$};
\node at (4.5,0-\shift) {$m_1$};
\node at (9,0-\shift) {$m_2$};
\node at (13.5,0-\shift) {$m_3=N$};
\node at (6.75,1.25-\shift) {Full Problem};

\def\spo{.8};
\def\spt{.4};
\fill[rounded corners=5pt,red!10!white,line width=3pt](0,-4.3) rectangle ++(5.5,.6);
\fill[rounded corners=5pt,red!25!white,line width=3pt](0,-4.3) rectangle ++(4.5,.6);
\foreach \x in {1,...,12}
\node[scale=.6]  at (\x/2-1/2,-4) {$|$};
\draw (0,-4)--(5.5,-4);
\node at (0-\spo,-4) {$n^1_0=0$};
\node at (5.5+\spt,-4) {$n^2_0$};
\node at (2.5,-4.75) {Subproblem $0$};
\draw[<->] (4.5,-4.4)--(5.5,-4.4);\node at (5,-4.7) {$\tau$};

\fill[rounded corners=5pt,blue!10!white,line width=3pt](3.5,-2.3) rectangle ++(6.5,.6);
\fill[rounded corners=5pt,blue!25!white,line width=3pt](4.5,-2.3) rectangle ++(4.5,.6);
\foreach \x in {1,...,14}
\node[scale=.6]  at (\x/2+6/2,-2) {$|$};
\draw (3.5,-2)--(10,-2);
\node at (3.5-\spt,-2) {$n_1^1$};
\node at (10+\spt,-2) {$n^2_1$};
\node at (6.75,-1.25) {Subproblem $1$};
\draw[<->] (3.5,-1.6)--(4.5,-1.6);\node at (4,-1.3) {$\tau$};
\draw[<->] (9,-1.6)--(10,-1.6);\node at (9.5,-1.3) {$\tau$};

\fill[rounded corners=5pt,green!10!white,line width=3pt](8,-4.3) rectangle ++(5.5,.6);
\fill[rounded corners=5pt,green!25!white,line width=3pt](9,-4.3) rectangle ++(4.5,.6);
\foreach \x in {1,...,12}
\node[scale=.6]  at (\x/2+15/2,-4) {$|$};
\draw (8,-4)--(13.5,-4);
\node at (8-\spt,-4) {$n_2^1$};
\node at (13.5+\spo,-4) {$n_2^2=N$};
\node at (11.5,-4.75) {Subproblem $2$};
\draw[<->] (8,-4.4)--(9,-4.4);\node at (8.5,-4.7) {$\tau$};

\def\mg{0.25};
\def\sh{0.45};
\draw[<-] (5.5,-4+\mg)--(5.5,-2-\mg);
\node at (5.5+\sh,-3) {$\bw^{(\ell)}_{n_0^2}$};
\draw[->] (3.5,-4+\mg)--(3.5,-2-\mg);
\node at (3.5-\sh,-3) {$\bx^{(\ell)}_{n^1_1}$};
\draw[->] (10,-4+\mg)--(10,-2-\mg);
\node at (10+\sh,-3) {$\bw^{(\ell)}_{n_1^2}$};
\draw[<-] (8,-4+\mg)--(8,-2-\mg);
\node at (8-\sh,-3) {$\bx^{(\ell)}_{n_2^1}$};
\end{tikzpicture}
\caption{Overlapping Schwarz decomposition scheme for OCPs. Here, $m_i$ denotes non-overlapping subdomains boundaries; $n_i^1, n_i^2$ denote left and right overlapping subdomains boundaries; $\bx_k^{(\ell)}$ is the state iterate at stage $k$ in the $\ell$-th iteration; $\bw_k^{(\ell)} = (\bx_k^{(\ell)}; \bu_k^{(\ell)}; \blambda_k^{(\ell)})$ is the primal-dual iterate at stage $k$ in the $\ell$-th iteration. Each subproblem depends on the initial state iterate coming from the previous subproblem, and the terminal primal-dual iterate coming from the next subproblem.}\label{fig:schematic}
\end{figure*}

Recent work {\red\cite{Shin2019Parallel} has applied the overlapping decomposition scheme for solving time-invariant nonlinear OCPs}. This relies on the observation that such a decomposition scheme can be interpreted as a {\em single~iteration} of an overlapping Schwarz decomposition scheme. {\red In particular, for solving nonlinear OCPs,} \cite{Shin2019Parallel} partitions the time domain~as in \cite{Barrows2014Time, Xu2018Exponentially}, but utilizes~both~primal~and~dual information from adjacent subdomains to perform {\red an iterative} coordination to achieve the convergence. 
The authors of \cite{Shin2019Parallel} {\red conjectured that the effect of perturbations at two ends (that is initial and terminal stages) on the {\em primal and dual} trajectory $\{(\tx_k,\tu_k,\tlambda_k)\}_k$ (not only for the primal trajectory as in~\cite{Xu2018Exponentially}) decays \textit{asymptotically}. Under this conjecture, they proved~that the overlapping Schwarz scheme converges locally.} The authors also provided empirical evidence with a nonlinear OCP that, {\red the perturbation effect decays not only asymptotically, but indeed exponentially. That is, EDS empirically holds for nonlinear OCPs just like for convex quadratic~OCPs as in \cite{Xu2018Exponentially}, although} a theoretical justification for such behavior {\red was not provided.} 

{\red The work in \cite{Na2020Exponential} investigated primal sensitivity for nonlinear OCPs. The authors showed that}, under uniform second-order sufficient condition, controllability condition, and boundedness condition, {\red EDS holds for primal solution of nonlinear OCPs. This result generalizes the convex setup~in \cite{Xu2018Exponentially} to a general nonconvex nonlinear setup under the same conditions. The generalization relies on~a~convexification~technique, which convexifies nonconvex problems to convex problems without altering primal solutions (cf. Algorithm \ref{alg:convex:proce}). However, the result in \cite{Na2020Exponential} is not sufficient for studying the convergence of Schwarz scheme in \cite{Shin2019Parallel} because (i) a terminal perturbation is missing and (ii) the dual sensitivity is not formally analyzed.
}

This paper extends the related literature \cite{Na2020Exponential, Barrows2014Time, Xu2018Exponentially, Shin2019Parallel} in the following aspects. (i) {\red We expand the results in \cite{Na2020Exponential} by enabling a terminal perturbation, and more importantly, complement \cite{Na2020Exponential} by showing that EDS also holds for the dual solution of nonlinear OCPs.~We emphasize that obtaining dual sensitivity from primal sensitivity is not straightforward; and we emphasize that the former has not been studied even in the context of convex OCPs. To address this knowledge gap, we delve deeper into the convexification technique in \cite{Na2020Exponential}; and show that, although the dual solution is altered by convexification (not preserved like primal solution), it is shifted only by an affine transformation of the primal solution (cf. Theorem \ref{thm:1}). With~this relation, we further provide a \textit{stagewise} closed form of the dual solution (cf. Theorem \ref{thm:2}) and establish dual sensitivity (cf. Theorem \ref{thm:dual:sensitivity}). (ii) By sensitivity analysis, we enhance the~existing overlapping decomposition and Schwarz schemes \cite{Barrows2014Time, Xu2018Exponentially, Shin2019Parallel} by providing a convergence analysis for} time-varying nonlinear OCPs, which cover a much wider range of applications {\red than convex OCPs in \cite{Barrows2014Time,Xu2018Exponentially} and time-invariant OCPs in \cite{Shin2019Parallel}. Furthermore, our primal-dual sensitivity analysis validates the conjecture~in~\cite{Shin2019Parallel}.}  (iii) We prove that the overlapping Schwarz scheme enjoys \textit{linear} convergence locally, provided the overlap size $\tau$ is sufficiently large. We also show that the linear rate is given by $C\rho^\tau$, where $C>0$, $\rho\in(0,1)$ are constants independent of horizon length $N$. In other words, the linear rate improves {\em exponentially} with the overlap size. {\red As a special case, we also show that the Schwarz scheme exhibits global convergence for a linear-quadratic OCP setting (but potentially with nonconvex objective)}. This result is of relevance, as it suggests that the Schwarz method can be used to solve quadratic programs and linear algebra systems inside second-order algorithms such as sequential quadratic programming and interior-point methods. {\red Such a special case is still more general than \cite{Xu2018Exponentially} and requires a fundamentally different proof technique.} Our theory explains favorable performance noticed in recent computational studies that use this approach \cite{Shin2020Graph}.

{\red It is worth mentioning that a recent work \cite{Na2020Superconvergence} made use of the established primal-dual sensitivity in this paper to study a real-time online model predictive control algorithm. Although this paper also solves nonlinear OCPs, there are significant differences in problem setup, techniques, and results with  \cite{Na2020Superconvergence}. First, \cite{Na2020Superconvergence} solved~\eqref{pro:1} in an {\it online} fashion, where a single Newton step is performed~to solve the subproblem {\it inexactly}, and then the system shifts to the next stage~with a \textit{new} subproblem to be targeted. Online algorithms are a special class of inexact methods for nonlinear predictive control problems, mostly used for systems that require a fast reaction to disturbances (for example, autonomous vehicles) \cite{Ohtsuka2004continuation/GMRES,Diehl2005Real,Zavala2009advanced,Zavala2010Real}. In contrast, our approach solves a long-horizon problem~\eqref{pro:1} in an {\it offline} fashion with a parallel environment, where problems do not shift but are solved {\it to the optimality}. Second, \cite{Na2020Superconvergence} relied on the sensitivity (of linear-quadratic OCPs) to show a decay structure of KKT matrix inverse, based on which \cite{Na2020Superconvergence} explored Newton's method and showed a linear-quadratic error recursion. In contrast, we rely on the sensitivity (of nonlinear OCPs) to have an increasingly more accurate boundary primal-dual iterates for subproblems and, hence, the subproblem solutions are increasingly closer to the \textit{truncated full-horizon solution}. Third, \cite{Na2020Superconvergence} only showed the real-time iterates stably track the solution (i.e. stay in a neighborhood), while we show the offline iterates converge to the solution linearly with a quantitative relation between the convergence rate and the overlap~size.
}

Our work focuses on the convergence properties of overlapping Schwarz scheme, which is a new and different paradigm for decomposing OCPs {\red compared to traditional approaches \cite{Beccuti2004Temporal, Boyd2010Distributed, Kumar2018Stochastic, Zavala2016New}}. This approach is interesting in that it spans a spectrum of algorithms that go from a fully centralized/sequential communication pattern (the overlap is the entire horizon) to a no-interaction communication pattern (no overlap). This iterative approach thus provides flexibility to enable different hardware implementations. The paper is motivated~by the great success observed in practice \cite{Barrows2014Time, Xu2018Exponentially, Shin2019Parallel, Shin2020Graph, Collet2020Non}. Prior to this work, the convergence of overlapping decomposition schemes has only been explored for {\red restrictive} linear-quadratic convex cases. Here, we show that the Schwarz decomposition exhibits linear convergence for general nonconvex OCPs (cf. Theorem \ref{thm:conv}).~This provides an advantage over the widely-used ADMM, whose convergence {\red is established mostly for restrictive setups that do not apply for \eqref{pro:1}. For instance, the standard result only deals with convex problems \cite{Boyd2010Distributed}, and the nonconvex results in  \cite[Equation (2.2)]{Hong2016Convergence} and \cite[Equation (1)]{Wang2018Global} do not allow nonlinear dynamical constraints as in \eqref{pro:1b}.} Furthermore, we numerically demonstrate that overlapping Schwarz has much faster convergence than ADMM and may be as efficient as Ipopt (a centralized solver). This observation is important because Ipopt is highly efficient but does not offer flexibility in hardware implementations. Establishing convergence theory for Schwarz scheme is also meaningful from a control practitioner's stand-point, as it explains the performance observed in many recent computational studies. Moreover, the established theory provides insights on how the scheme will behave when tuning the overlap size $\tau$. Our primal-dual EDS result also provides a foundation for analyzing the behavior of a variety of algorithms and approximations for predictive control \cite{Shin2020Diffusing, Gruene2020Efficient, Gruene2021Abstract}.

The remainder of the paper is organized as follows.  In Section \ref{sec:2} we establish primal-dual sensitivity results for \eqref{pro:1}. In Section \ref{sec:4} we describe the overlapping Schwarz scheme and its convergence analysis. Numerical results are shown in Section \ref{sec:6} and  conclusions are presented in Section \ref{sec:7}.

\section{Primal-Dual Exponential Decay of Sensitivity}\label{sec:2}

In this section, we {\red enhance the analysis in \cite{Na2020Exponential}} and establish~a primal-dual sensitivity result for nonlinear OCPs that we call exponential decay of sensitivity (EDS). We use the following notation: for $n, m\in\mZ_{>0}$, we let $[n, m]$, $[n, m)$, $(n, m]$, $(n, m)$ be the corresponding integer sets; also, $[n] = [0,n]$. Boldface symbols denote column vectors. For a set of vectors $\{\ba_i\}_{i=m}^n$, $\ba_{m:n} = (\ba_m; \ldots; \ba_n)$ represents a long vector that stacks~them together. For scalars $a, b$, $a\vee b = \max(a, b)$; $a\wedge b = \min(a, b)$. For a set of matrices $\{A_i\}_{i=m}^n$, $\prod_{i=m}^nA_i = A_nA_{n-1}\cdots A_m$ if $m\leq n$ and $I$ otherwise. Without specification, $\|\cdot\|$ denotes either $\ell_2$ norm for a vector or {\red spectral} norm for a matrix. For a function $f:\mR^n\rightarrow\mR^m$, $\nabla f\in\mR^{n\times m}$ is its Jacobian.

\subsection{{\red Sensitivity Analysis and} Primal EDS Results}

We begin by analyzing the sensitivity of the primal solution. Most of the results in this subsection are presented in \cite{Na2020Exponential}, but we revisit them for completeness and to lay the groundwork for the new dual sensitivity in Section \ref{sec:3}. We rewrite \eqref{pro:1} by explicitly expressing the dependence on external {\em data} (parameters) as
\begin{subequations}\label{pro:2}
\begin{align}
\min_{\substack{\{\bx_k\}\\ \{\bu_k\}}}\  & \sum_{k=0}^{N-1} g_k(\bx_k, \bu_k; \bd_k) + g_N(\bx_N; \bd_N), \label{pro:2a}\\
\text{s.t.\ \ } & \bx_{k+1} = f_k(\bx_k, \bu_k; \bd_k), \; k\in[N-1],\; (\blambda_k) \label{pro:2b}\\
& \bx_0 = \bar{\bx}_0,\;(\blambda_{-1}). \label{pro:2c}
\end{align}
\end{subequations}
Here $\bd_k\in\mR^{n_d}$ and $\bd_{-1} = \bbx_0$ are the external problem data. In what follows, we let $\bz_k = (\bx_k; \bu_k)$, $\bw_k=(\bz_k;\blambda_k)$ for $k\in[N-1]$, and $\bw_N = \bz_N = \bx_N$ and $\bw_{-1}=\blambda_{-1}$. $\bx = \bx_{0:N}$ (similar for $\bu, \bd, \blambda, \bz, \bw$) is the full vector with variables being ordered by stages. We also denote $\bz=(\bx,\bu)$, $\bw = (\bx, \bu,\blambda)$ for simplicity. We let $n_{\bx}$ (similar for $n_{\bu}, n_{\bd}, n_{\blambda}, n_{\bz}, n_{\bw}$) be the dimension of $\bx$.

The Lagrange function of \eqref{pro:2} is
\begin{align*} 
\mL(\bw; \bd) =& \sum_{k=0}^{N-1}\overbrace{g_k(\bz_k; \bd_k) + \blambda_{k-1}^T\bx_k - \blambda_k^Tf_k(\bz_k; \bd_k)}^{ \mL_k(\bz_k,\blambda_{k-1:k}; \bd_k)}\nonumber\\
& +\underbrace{ g_N(\bz_N; \bd_N) + \blambda_{N-1}^T\bx_N}_{\mL_N(\bz_N, \blambda_{N-1}; \bd_N)} - \blambda_{-1}^T\bd_{-1}.
\end{align*}
Suppose that $\tw(\bd) = (\tx(\bd), \tu(\bd), \tlambda(\bd))$ is a local~minimizer of \eqref{pro:2} with unperturbed data $\bd$. Sensitivity analysis~characterizes how the solution trajectory $\tw(\bd)$ varies with respect to perturbations on $\bd$. In particular, we let $\bl \in \mR^{n_{\bd}}$ be the perturbation direction of $\bd$ and let the corresponding parametric perturbation path be:
\begin{equation}\label{equ:perturb:path}
\bd(h, \bl) = \bd + h \bl + o(h). 
\end{equation}
Then we define directional derivatives of {\red solution trajectories}~as
\begin{subequations}\label{equ:direc:deriva}
\begin{align}
\tp_k = & \lim\limits_{h\searrow 0}\frac{\tx_k(\bd(h,\bl)) - \tx_k(\bd)}{h}, \text{\ }\forall k\in[N],\\ 
\tq_k = & \lim\limits_{h\searrow 0}\frac{\tu_k(\bd(h,\bl)) - \tu_k(\bd)}{h}, \text{\ } \forall k\in[N-1],\\
\tzeta_k = & \lim\limits_{h\searrow 0}\frac{\tlambda_k(\bd(h,\bl)) - \tlambda_k(\bd)}{h}, \text{\ } \forall k\in[-1, N-1].
\end{align}
\end{subequations}
Sensitivity analysis is equivalent to bounding the magnitude of directional derivatives. We are particularly interested in~bounding $\|\tp_k\|$, $\|\tq_k\|$, $\|\tzeta_k\|$ when only $\bd_i$ is perturbed. That is~we enforce $\bl = \be_i$, where $\be_i\in\mR^{n_{\bd}}$ for $i\in[-1, N]$ is any~unit vector with support within stage $i$.

\begin{definition}[Reduced Hessian]\label{def:1}

For $k\in[N-1]$, we let $A_k = \nabla_{\bx_k}^Tf_k(\bz_k; \bd_k)$, $B_k = \nabla_{\bu_k}^Tf_k(\bz_k; \bd_k)$, $C_k = \nabla_{\bd_k}^Tf_k(\bz_k; \bd_k)$, and Hessian matrices be
\begin{align*}
H_k(\bw_k; \bd_k) =& \begin{pmatrix}
Q_k & S_k^T\\
S_k & R_k
\end{pmatrix} = \begin{pmatrix}
\nabla_{\bx_k}^2\mL_k & \nabla_{\bx_k\bu_k}^2\mL_k\\
\nabla_{\bu_k\bx_k}^2\mL_k & \nabla_{\bu_k}^2\mL_k
\end{pmatrix},\\
D_k(\bw_k; \bd_k) = & \begin{pmatrix}
D_{k1} & D_{k2}
\end{pmatrix} = \begin{pmatrix}
\nabla_{\bd_k\bx_k}^2\mL_k & \nabla_{\bd_k\bu_k}^2\mL_k
\end{pmatrix},
\end{align*}
together with $H_N(\bz_N; \bd_N) = \nabla_{\bx_N}^2\mL_N(\bz_N,\blambda_{N-1}; \bd_N)$ and $D_N(\bz_N; \bd_N) = \nabla_{\bd_N\bx_N}^2\mL_N(\bz_N,\blambda_{N-1}; \bd_N)$. The evaluation point of $A_k, B_k, C_k$ is suppressed for conciseness. We also use $Q_N$ and $H_N$ interchangeably. In addition, we let $H(\bw; \bd) = \diag(H_0, \ldots, H_N)\in\mR^{n_{\bz}\times n_{\bz}}$ and let Jacobian matrix $G(\bz; \bd)\in\mR^{n_{\bx}\times n_{\bz}}$ (which has full row rank) be
\begin{equation*}
\left(\begin{smallmatrix}
I\\
-A_0 & -B_0 & I\\
&&-A_1 & -B_1 & I\\
&&&&\ddots & \ddots\\
&&&&&-A_{N-1} & -B_{N-1} & I
\end{smallmatrix}\right).
\end{equation*}
Let $Z(\bz; \bd)\in\mR^{n_{\bz}\times n_{\bu}}$ ($n_{\bu} = n_{\bz} - n_{\bx}$) be a full column rank matrix whose columns are orthonormal and span the null space of $G(\bz; \bd)$. Then the reduced Hessian is {\red defined as}
\begin{equation*}
ReH(\bw; \bd) = Z^THZ.
\end{equation*}
\end{definition}

We then introduce three assumptions to establish~sensitivity: uniform second-order sufficient condition (SOSC), controllability, and boundedness. Recall that $\bd$ is the unperturbed data with $\tw(\bd)$ being a local solution. We drop $\bd$ hereinafter from the notation and denote the solution as $\tw$.

\begin{assum}[Uniform SOSC]\label{ass:1}
At $(\tw;\bd)$, the reduced Hessian of \eqref{pro:2} satisfies $ReH(\tw; \bd)\succeq \gamma_H I$ for some uniform constant $\gamma_H>0$ independent of horizon $N$.
\end{assum}

{\red Assumption \ref{ass:1}} requires the Lagrangian Hessian to be positive definite in the null space of the linearized constraints (instead of in the whole space). The uniformity in Assumption \ref{ass:1} means the independence of $\gamma_H$ from $N$. 

\begin{definition}[Controllability Matrix]
For any $k\in[N-1]$ and evolution length $t\in[1, N-k]$, the controllability matrix is given by
\begin{align*}
\Xi_{k, t}(&\bz_{k:k+t-1}; \bd_{k:k+t-1}) \\
= & \bigl(\begin{smallmatrix}
B_{k+t-1} & A_{k+t-1}B_{k+t-2} & \ldots & \rbr{\prod_{l=1}^{t-1}A_{k+l}}B_k
\end{smallmatrix}\bigr)\in\mR^{n_x\times tn_u},
\end{align*}
where $\{A_i\}_{i=k+1}^{k+t-1}$, $\{B_i\}_{i=k}^{k+t-1}$ evaluates at $\{(\bz_i; \bd_i)\}_{i=k}^{k+t-1}$.
\end{definition}

\begin{assum}[Uniform Controllability]\label{ass:2}
At $(\tz; \bd)$, there exist {\red uniform} constants $\gamma_C, t>0$ independent of $N$ such that $\forall k\in[N-t]$, $\exists1\leq t_k\leq t$ and such that
\begin{equation*}
\Xi_{k, t_k}\Xi_{k, t_k}^T\succeq\gamma_C I ,
\end{equation*} 
where $\Xi_{k, t_k}$ evaluates at $(\tz_{k:k+t_k-1}; \bd_{k:k+t_k-1})$.
\end{assum}

The controllability condition is imposed on the constraint matrices. It captures the \textit{local geometry} of the null space,~which follows the notion of uniform complete controllability, introduced in \cite[Definition 3.1]{Keerthi1988Optimal} and used in sensitivity analysis in \cite[Definition 2.2]{Xu2018Exponentially}.

\begin{assum}[Uniform Boundedness]\label{ass:3}
At $(\tw;\bd)$, there~exists {\red a uniform} constant $\Upsilon_{\text{upper}}$ independent of $N$ such that $\|H_N\|\leq \Upsilon_{\text{upper}}$ and $\forall k\in [N-1]$:
\begin{equation*}
\|H_k\| \vee \|D_k\| \vee \|A_k\| \vee \|B_k\| \vee\|C_k\|\leq \Upsilon_{\text{upper}}.
\end{equation*}
\end{assum}

The following result shows that $\tp_k$, $\tq_k$, $\tzeta_k$ in \eqref{equ:direc:deriva} are the solution of a linear-quadratic OCP provided SOSC holds at~$\tw$. 

\begin{thm}[Sensitivity of Problem \eqref{pro:2}]\label{thm:sen:LQP}

Consider OCP \eqref{pro:2}, and suppose $\bd$ is perturbed along the path \eqref{equ:perturb:path}. If $\tw$ satisfies SOSC, the directional derivatives $(\tp_k, \tq_k, \tzeta_k)$ defined in \eqref{equ:direc:deriva} exists and is the primal-dual solution of the problem:
\begin{subequations}\label{pro:3}
\begin{align}
&\min_{\substack{\{\bp_k\}\\\{\bq_k\}}} \ \sum_{k=0}^{N-1}\biggl(\begin{smallmatrix}
\bp_k\\
\bq_k\\
\bl_k
\end{smallmatrix}\biggr)^T\biggl(\begin{smallmatrix}
Q_k &S_k^T & D_{k1}^T\\
S_k & R_k & D_{k2}^T\\
D_{k1} & D_{k2} & \0
\end{smallmatrix}\biggr)\biggl(\begin{smallmatrix}
\bp_k\\
\bq_k\\
\bl_k
\end{smallmatrix}\biggr) \nonumber\\
&\qquad\qquad \quad\quad\quad + \biggl(\begin{smallmatrix}
\bp_N\\
\bl_N
\end{smallmatrix}\biggr)^T\biggl(\begin{smallmatrix}
Q_N & D_N^T\\
D_N & \0
\end{smallmatrix}\biggr)\biggl(\begin{smallmatrix}
\bp_N\\
\bl_N
\end{smallmatrix}\biggr), \label{pro:3a}\\
&\st\ \bp_{k+1} = A_k\bp_k + B_k\bq_k + C_k\bl_k,\; (\bzeta_k)\label{pro:3b}\\
&\qquad\bp_0 = \bl_{-1},\; (\bzeta_{-1}). \label{pro:3c}
\end{align}
\end{subequations}
Here, $\bzeta_{-1:N-1}$ are dual variables associated to constraints. All matrices are evaluated at $(\tw;\bd)$.
\end{thm}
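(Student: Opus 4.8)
The plan is to obtain $(\tp_k,\tq_k,\tzeta_k)$ as the directional derivative of the primal-dual solution map $\bd\mapsto\tw(\bd)$ of \eqref{pro:2}, and then to recognize the linear system this derivative satisfies as the KKT system of the linear-quadratic problem \eqref{pro:3}. First I would assemble the first-order conditions of \eqref{pro:2} into a single map $F(\bw;\bd)=\0$ that stacks the stationarity residuals $\nabla_{\bz_k}\mL$ together with the feasibility residuals $\bx_{k+1}-f_k(\bz_k;\bd_k)$ and $\bx_0-\bd_{-1}$. Its Jacobian in $\bw$ at $(\tw;\bd)$ is the saddle-point matrix $K=\bigl(\begin{smallmatrix}H & G^T\\ G & \0\end{smallmatrix}\bigr)$ with $H$ and $G$ as in Definition \ref{def:1}.

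The enabling step is nonsingularity of $K$. Since $G$ has full row rank (Definition \ref{def:1}) and SOSC (Assumption \ref{ass:1}) gives $ReH=Z^THZ\succeq\gamma_H I\succ0$, a null-space argument settles this: if $K(\bu;\bv)=\0$ then $G\bu=\0$ forces $\bu=Z\bxi$, and left-multiplying $H\bu+G^T\bv=\0$ by $Z^T$ yields $Z^THZ\bxi=\0$, so $\bxi=\0$, $\bu=\0$, and then $G^T\bv=\0$ forces $\bv=\0$ by full column rank of $G^T$. With $K$ invertible, the implicit function theorem applied to $F(\tw(\bd);\bd)=\0$ shows $\tw(\cdot)$ is $C^1$ near $\bd$; hence the one-sided limits in \eqref{equ:direc:deriva} exist, are in fact two-sided, and are linear in $\bl$ via $\tfrac{d}{dh}\tw(\bd(h,\bl))\big|_{h=0}=-K^{-1}\nabla_{\bd}F\,\bl$.

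Next I would differentiate $F=\0$ along the path \eqref{equ:perturb:path} at $h=0$ and read the resulting system off stage by stage. Linearizing the dynamics and the initial condition gives $\tp_{k+1}=A_k\tp_k+B_k\tq_k+C_k\bl_k$ and $\tp_0=\bl_{-1}$, which are exactly the constraints \eqref{pro:3b}--\eqref{pro:3c}. Linearizing the stationarity block $\nabla_{\bz_k}\mL=\0$ produces the stagewise Hessian action of $H_k$ on $(\tp_k;\tq_k)$, the adjoint couplings $(\tzeta_{k-1};\0)$ and $-(A_k^T;B_k^T)\tzeta_k$ from the multipliers $\blambda_{k-1},\blambda_k$, and the data term $D_k^T\bl_k$, with the terminal stage built from $Q_N$ and $D_N$. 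I would then verify that this is precisely the stationarity system of the Lagrangian of \eqref{pro:3} in the variables $\{\bp_k,\bq_k\}$ with multipliers $\bzeta_{-1:N-1}$, the $\{\bl_k\}$ being frozen data.

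Finally I would invoke uniqueness to close the identification: because \eqref{pro:3} shares the constraint Jacobian $G$ and the Lagrangian Hessian $H$ of \eqref{pro:2} at $(\tw;\bd)$, its reduced Hessian is again $ReH\succ0$ and its KKT matrix is the same nonsingular $K$, so \eqref{pro:3} has a unique primal-dual solution, which must therefore coincide with the directional derivatives constructed above. I expect the main obstacle to be the careful matching of the two stationarity systems rather than any conceptual difficulty. The primal part is immediate, since the minimizer of \eqref{pro:3} is invariant under positive scaling of \eqref{pro:3a}; the delicate point is the dual part, where one must fix the normalization of the quadratic objective and the sign convention for $\bzeta_k$ so that the factor arising from differentiating a quadratic form $\bv^T M\bv$ is absorbed consistently and $\bzeta_k$ aligns exactly with $\tzeta_k$ rather than with a rescaling of it.
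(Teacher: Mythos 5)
Your proposal is correct, but it is a genuinely different presentation from the paper's: the paper proves Theorem~\ref{thm:sen:LQP} by citing the general perturbation result \cite[Theorem 5.61]{Bonnans2000Perturbation} and only verifying its hypotheses (LICQ from the banded structure of $G$, SOSC from Assumption~\ref{ass:1}), whereas you reconstruct the underlying argument directly: nonsingularity of the KKT matrix via the null-space lemma, the implicit function theorem applied to the stacked KKT residual $F(\bw;\bd)=\0$, differentiation of that system along the path \eqref{equ:perturb:path}, and identification of the linearized system with the KKT conditions of \eqref{pro:3}. This is the standard proof of the cited theorem in the nondegenerate (LICQ $+$ SOSC) case, and all of its ingredients are sound here since $f_k,g_k$ are $C^2$; what the citation buys the paper is brevity and coverage of directional (one-sided) differentiability in more degenerate settings, while your route buys a self-contained derivation and, as a bonus, the stronger conclusion that the derivative is two-sided and linear in $\bl$. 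The ``delicate point'' you flag at the end is real and worth making explicit: because \eqref{pro:3a} is written as $\bv^TM\bv$ rather than $\tfrac12\bv^TM\bv$, differentiating the stationarity conditions of \eqref{pro:2} yields a system with unit coefficients on the Hessian blocks, so the multiplier of \eqref{pro:3} as normalized in the paper equals \emph{twice} the directional derivative of $\blambda$; the paper silently adopts the convention that $\tzeta$ denotes the dual solution of \eqref{pro:3} itself (this is consistent with the KKT system \eqref{eq:KKTLQP} and the factors of $2$ appearing throughout Theorem~\ref{thm:2}), and the factor is harmless for all subsequent decay estimates since it is absorbed into the constants. Your proof would be complete once you fix that normalization one way or the other and note, via persistence of SOSC under small perturbations, that the implicit-function branch of KKT points coincides with the local minimizers $\tw(\bd(h,\bl))$ referenced in \eqref{equ:direc:deriva}.
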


\begin{proof}
See {\red \cite[Theorem 5.61]{Bonnans2000Perturbation}} for the proof. Observe from the structure of $G(\bz;\bd)$ in Definition \ref{def:1} that the linear independence constraint qualification (LICQ) holds with any $(\bz; \bd)$. Thus,~the results hold for any perturbation direction $\bl$. 
\end{proof}

Let $\bxi_{k}=(\bp_k;\bq_k;\bzeta_k)$ for $k\in[N-1]$, and $\bxi_{-1}=\bzeta_{-1}$ and $\bxi_N=\bp_N$. Further, $\bxi=(\bp, \bq,\bzeta)=\bxi_{-1:N}$ (similar for $\bp, \bq, \bzeta$) is the full vector with variables being ordered by stages. From SOSC (cf. Assumption \ref{ass:1}), LICQ, and \cite[Lemma 16.1]{Nocedal2006Numerical}, we know that $\txi=(\tp,\tq,\tzeta)$ is unique global solution of \eqref{pro:3}. However, the indefiniteness of the Hessians $H_k$ in Problem \eqref{pro:3} brings difficulty in analyzing the solution of \eqref{pro:3} obtained from the Riccati recursion. Thus, \cite{Na2020Exponential} relied on the convexification procedure proposed in \cite{Verschueren2017Sparsity}, which transfers \eqref{pro:3} into another linear-quadratic program whose new matrices $\tilde{H}_k$ are positive definite. The procedure is displayed in Algorithm \ref{alg:convex:proce}. One inputs quadratic matrices $\{H_k, D_k, A_k, B_k, C_k\}$ of Problem \eqref{pro:3}, and then obtains new matrices $\{\tilde{H}_k, \tilde{D}_k\}$. The constraint matrices $\{A_k, B_k, C_k\}$ need not be transformed. As shown in \cite{Na2020Exponential}, with~a proper set of $\beta>0$, Algorithm~\ref{alg:convex:proce} preserves the primal solution. We will show later that Algorithm \ref{alg:convex:proce} shifts the dual solution.
  

\begin{thm}[Primal EDS]\label{thm:primal:sensitivity}

Let Assumptions \ref{ass:1}, \ref{ass:2}, \ref{ass:3} hold at the solution $\tw$ of Problem \eqref{pro:2}. Then there exist constants $\Upsilon>0$, $\rho\in(0, 1)$, which only depend on constants in the assumptions and hence are independent of horizon length $N$, such that
\begin{enumerate}[label=(\alph*),topsep=-0.2cm]
\setlength\itemsep{0.2em}
\item if $\bl = \be_i$, $\forall i\in[N]$, then $\|\tp_k\| \vee \|\tq_k\| \leq \Upsilon\rho^{|k-i|}$ for $k\in[N-1]$ and $\|\tp_N\|\leq \Upsilon\rho^{N-i}$;
\item if $\bl = \be_{-1}$, then $\|\tp_k\| \vee \|\tq_k\| \leq \Upsilon\rho^{k}$ for $k\in[N-1]$ and $\|\tp_N\|\leq \Upsilon \rho^{N}$.
\end{enumerate}
\end{thm}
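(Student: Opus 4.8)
### Proof Proposal for Theorem \ref{thm:primal:sensitivity} (Primal EDS)

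The plan is to analyze the sensitivity system \eqref{pro:3} from Theorem \ref{thm:sen:LQP}, which already identifies $(\tp_k,\tq_k,\tzeta_k)$ as the solution of a linear-quadratic OCP whose only inhomogeneous term sits at a single stage $i$ (since $\bl = \be_i$ forces $\bl_k = \0$ for all $k \neq i$). The central idea is a \emph{decay-of-correlation} argument via the Riccati recursion: if I can show that the state-costate map propagates contractively across stages, then the effect of the localized forcing at stage $i$ must decay geometrically as $|k-i|$ grows. The main difficulty is that the Hessian blocks $H_k$ in \eqref{pro:3} are only \emph{reduced}-Hessian positive definite (Assumption \ref{ass:1}), not positive definite on the full space, so the bare Riccati recursion for \eqref{pro:3} need not produce positive-definite Riccati matrices and the usual contraction estimates break down. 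To get around this I would first invoke the convexification procedure (Algorithm \ref{alg:convex:proce}) from \cite{Na2020Exponential, Verschueren2017Sparsity}, which replaces $\{H_k, D_k\}$ by $\{\tilde H_k, \tilde D_k\}$ that are genuinely positive definite while \emph{preserving the primal solution} $(\tp_k,\tq_k)$ and leaving the constraint matrices $\{A_k,B_k,C_k\}$ untouched; thus it suffices to prove the claimed decay for the convexified problem, whose primal solution coincides with the quantities I must bound.

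With a uniformly positive-definite convexified Hessian in hand, the next step is to run the Riccati recursion on the convexified system and establish two \textbf{uniform} spectral facts, each independent of $N$: first, that the Riccati matrices $P_k$ are uniformly bounded above and below, $\underline{c}\, I \preceq P_k \preceq \bar c\, I$; and second, that the closed-loop transition matrices $\hat A_k = A_k - B_k K_k$ (where $K_k$ is the Riccati feedback gain) are \emph{uniformly contractive over blocks of length $t$}, i.e. the product over any $t$ consecutive stages has spectral norm bounded by some $\rho_0 \in (0,1)$. The boundedness comes from Assumption \ref{ass:3} combined with the positive-definite reduced Hessian, and the contraction is precisely where the uniform controllability Assumption \ref{ass:2} enters: controllability over windows of length at most $t$ guarantees that the costate dynamics are uniformly stabilized, so energy injected at one stage is dissipated at a geometric rate determined by $t$, $\gamma_C$, and $\gamma_H$. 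I would package these two estimates into a single statement that the homogeneous state-costate flow of \eqref{pro:3} decays like $\rho^{|k-k'|}$ for some $\rho\in(0,1)$ and constant $\Upsilon$ depending only on the assumption constants.

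The final step propagates the localized forcing. Because $\bl = \be_i$ makes \eqref{pro:3b}--\eqref{pro:3c} homogeneous except at stage $i$ (through the term $C_i \bl_i$ in the dynamics and the off-diagonal Hessian couplings $D_{i1},D_{i2}$ in the objective), the solution can be written as the homogeneous flow driven by a bounded impulse supported at stage $i$; the uniform boundedness of $C_i$, $D_i$ from Assumption \ref{ass:3} controls the impulse magnitude by a constant, and the decay estimate from the previous step then yields $\|\tp_k\| \vee \|\tq_k\| \leq \Upsilon \rho^{|k-i|}$ for $k\in[N-1]$, together with the endpoint bound $\|\tp_N\| \leq \Upsilon \rho^{N-i}$, which is part (a). Part (b), with $\bl = \be_{-1}$, is the same argument specialized to a forcing that enters only through the initial condition \eqref{pro:3c} (the boundary term $\bp_0 = \bl_{-1}$), whence the distance from the perturbation stage is simply $k$ rather than $|k-i|$, giving $\Upsilon\rho^{k}$ and $\Upsilon\rho^{N}$. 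I expect the hard part to be proving the \emph{uniform} (in $N$) two-sided bound on the Riccati matrices and the block-contraction of the closed-loop maps, since this is exactly where one must convert the local controllability and SOSC hypotheses into a horizon-independent geometric rate; the localization and impulse-bounding steps are comparatively routine once the uniform contraction is secured.
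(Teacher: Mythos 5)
Your proposal is correct and follows essentially the same route as the paper, which proves Theorem \ref{thm:primal:sensitivity} by deferring to \cite[Theorem 5.7]{Na2020Exponential}: there the argument is exactly convexification via Algorithm \ref{alg:convex:proce} to obtain positive-definite stage Hessians while preserving the primal solution, followed by uniform (in $N$) bounds on the Riccati matrices and geometric decay of products of the closed-loop matrices $E_k = A_k + B_kP_k$ derived from the uniform controllability and boundedness assumptions, and finally propagation of the stage-$i$ impulse through $C_i$, $D_{i1}$, $D_{i2}$ (or through the initial condition for $\bl=\be_{-1}$). Your identification of the uniform two-sided Riccati bounds and the block contraction as the technical core is accurate; only your notation swaps the roles of $K_k$ and $P_k$ relative to the paper's conventions.
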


This is \cite[Theorem 5.7]{Na2020Exponential}\footnote{
We note that Problem \eqref{pro:2} is slightly different from the one in \cite{Na2020Exponential}, in that \cite{Na2020Exponential} does not include the terminal data $\bd_N$. However, {\red with fairly slight adjustment, their Theorem 5.7 can be extended to the case $\bl = \be_N$.}} 
and indicates that the impact of a perturbation on $\bd_i$ on the primal solution $\bz^\star_k$ at stage $k$ decays exponentially fast as one moves away from stage $i$.


\subsection{Dual EDS Results}\label{sec:3}

We now present dual sensitivity for \eqref{pro:2} based on convexification procedure in Algorithm \ref{alg:convex:proce}. As shown in \cite{Na2020Exponential}, because~of the positive definiteness of $\tilde{H}_k$, the convexified problem (obtained by replacing $\{H_k, D_k\}$ in \eqref{pro:3} with outputs $\{\tilde{H}_k, \tilde{D}_k\}$) also has a unique global solution. Thus, we need to understand how~the dual solutions are affected by convexification. We will show from the Karush-Kuhn-Tucker (KKT) conditions (i.e.,~the first-order necessary conditions) that Algorithm \ref{alg:convex:proce} shifts the~dual solution by {\red an affine} transformation of the primal solution. In what follows, we use $\mL\mQ\mP$ to denote Problem \eqref{pro:3} defined with original matrices $\{H_k, D_k\}$, and $\mC\mL\mQ\mP$ to denote Problem \eqref{pro:3} defined with convexified matrices $\{\tilde{H}_k, \tilde{D}_k\}$. Furthermore, $\txic=(\tpc, \tqc, \tzetac)$ denotes the (global) primal-dual solution of $\mC\mL\mQ\mP$. Recall that, by Theorem \ref{thm:sen:LQP}, $(\tp, \tq, \tzeta)$ is the global solution of $\mL\mQ\mP$.

The following result establishes a relationship between the solutions {\red $(\tp, \tq, \tzeta)$ and $(\tpc, \tqc, \tzetac)$}.

\begin{algorithm}[t]\caption{Convexification Procedure}\label{alg:convex:proce}
\begin{algorithmic}[1]
\STATE \textbf{Input:} $\{H_k, D_k\}_{k=0}^N$, $\{A_k, B_k, C_k\}_{k = 0}^{N-1}$, $\beta>0$;
\STATE $\tilde{H}_N = \tilde{Q}_ N = \beta I$;
\STATE $\bar{Q}_N = Q_N - \tilde{Q}_N$;
\FOR {$k = N-1, \ldots, 0$}
\STATE \hskip-3cm$\substack{\biggl(\begin{smallmatrix}
\hat{Q}_k & \tilde{S}_k^T & \tilde{D}_{k1}^T\\
\tilde{S}_k & \tilde{R}_k & \tilde{D}_{k2}^T\\
\tilde{D}_{k1} & \tilde{D}_{k2} & *
\end{smallmatrix}\biggr) = \biggl(\begin{smallmatrix}
Q_k & S_k^T & D_{k1}^T\\
S_k & R_k & D_{k2}^T \\
D_{k1} & D_{k2} & \0
\end{smallmatrix}\biggr) \\ \qquad\qquad\qquad\qquad\qquad\qquad\qquad\qquad\qquad\qquad+ \biggl(\begin{smallmatrix}
A_k^T\\
B_k^T\\
C_k^T
\end{smallmatrix}\biggr)\bar{Q}_{k+1}(\begin{smallmatrix}
A_k & B_k & C_k
\end{smallmatrix})}$
\STATE $\tilde{Q}_k = \tilde{S}_k^T\tilde{R}_k^{-1}\tilde{S}_k + \beta I$
\STATE $\tilde{H}_k = \begin{pmatrix}
\tilde{Q}_k & \tilde{S}_k^T\\
\tilde{S}_k & \tilde{R}_k
\end{pmatrix}$
\STATE $\bar{Q}_k = \hat{Q}_k - \tilde{Q}_k$;
\ENDFOR
\STATE \textbf{Output:} $\{\tilde{H}_k\}_{k = 0}^N$, $\{\tilde{D}_k\}_{k = 0}^{N-1}$, $D_N (= \tilde{D}_N)$.
\end{algorithmic}
\end{algorithm}

\begin{thm}\label{thm:1}
Under Assumption \ref{ass:1}, we execute Algorithm \ref{alg:convex:proce} with $\beta\in(0, \gamma_H)$ for $\mL\mQ\mP$. We then have that
\begin{align}\label{equ:relation}
\tp = \tpc, \quad \tq = \tqc, \quad \tzeta = \tzetac - 2\bar{Q}\tp,
\end{align}
where $\bar{Q} = \diag(\bar{Q}_0,\ldots, \bar{Q}_N)$ with $\{\bar{Q}_k\}_{k=0}^N$ is defined in Algorithm \ref{alg:convex:proce} recursively. 

\end{thm}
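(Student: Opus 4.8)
The plan is to work entirely at the level of the first-order (KKT) stationarity conditions of the two linear-quadratic programs $\mL\mQ\mP$ and $\mC\mL\mQ\mP$. Both have a full-row-rank constraint Jacobian $G$ (LICQ, as noted after Theorem \ref{thm:sen:LQP}), so each has a \emph{unique} multiplier, namely $\tzeta$ and $\tzetac$; and by \cite{Na2020Exponential} the two problems share the same primal solution, so $\tp=\tpc$ and $\tq=\tqc$ may be taken as given. I would therefore substitute a common primal point into both stationarity systems and read off the difference $\tzeta-\tzetac$ stage by stage, so that the relation $\tzeta=\tzetac-2\bar{Q}\tp$ drops out of a backward recursion. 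No candidate needs to be guessed and verified: I work directly with the two genuine dual solutions.

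First I would write out the stationarity equations. Because the objective \eqref{pro:3a} carries no factor $1/2$, differentiating a stage-$k$ quadratic form produces a factor $2$; stationarity of $\mL\mQ\mP$ in $\bp_k$ for $1\le k\le N-1$ reads $2Q_k\bp_k + 2S_k^T\bq_k + 2D_{k1}^T\bl_k + \bzeta_{k-1} - A_k^T\bzeta_k = 0$, with the terminal equation $2Q_N\bp_N + 2D_N^T\bl_N + \bzeta_{N-1} = 0$ and the initial equation $2Q_0\bp_0 + 2S_0^T\bq_0 + 2D_{01}^T\bl_0 + \bzeta_{-1} - A_0^T\bzeta_0 = 0$. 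The system for $\mC\mL\mQ\mP$ is identical after replacing $(Q_k, S_k, D_{k1})$ by $(\tilde{Q}_k, \tilde{S}_k, \tilde{D}_{k1})$ and $\bzeta$ by $\tzetac$. From Algorithm \ref{alg:convex:proce} I would then record the exact matrix increments $Q_k - \tilde{Q}_k = \bar{Q}_k - A_k^T\bar{Q}_{k+1}A_k$, $S_k - \tilde{S}_k = -B_k^T\bar{Q}_{k+1}A_k$, and $D_{k1} - \tilde{D}_{k1} = -C_k^T\bar{Q}_{k+1}A_k$, together with the terminal data $\tilde{Q}_N = \beta I$ and $\bar{Q}_N = Q_N - \beta I$.

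Next I would prove $\tzeta_{k-1} = \tzetac_{k-1} - 2\bar{Q}_k\tp_k$ for all $k\in[0,N]$ by backward induction. The base case $k=N$ follows by subtracting the two terminal equations, which immediately gives $\tzeta_{N-1} - \tzetac_{N-1} = -2(Q_N - \beta I)\tp_N = -2\bar{Q}_N\tp_N$. For the inductive step I subtract the $\mC\mL\mQ\mP$ $\bp_k$-equation from the $\mL\mQ\mP$ one at the common primal point, insert the three increment identities above together with the induction hypothesis $\tzeta_k - \tzetac_k = -2\bar{Q}_{k+1}\tp_{k+1}$, and collect terms. The crucial cancellation is that the collected expression contains $-2A_k^T\bar{Q}_{k+1}(A_k\tp_k + B_k\tq_k + C_k\bl_k) + 2A_k^T\bar{Q}_{k+1}\tp_{k+1}$, which vanishes by primal feasibility \eqref{pro:3b}; what survives is exactly $2\bar{Q}_k\tp_k + (\tzeta_{k-1} - \tzetac_{k-1}) = 0$, i.e. the next identity. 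The initial equation at $k=0$ is handled by the same computation and yields $\tzeta_{-1} - \tzetac_{-1} = -2\bar{Q}_0\tp_0$. Stacking these componentwise identities over $k$, and noting that the multiplier $\bzeta_{k-1}$ is aligned with the state $\bp_k$, produces precisely $\tzeta = \tzetac - 2\bar{Q}\tp$ with $\bar{Q} = \diag(\bar{Q}_0,\ldots,\bar{Q}_N)$.

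The main obstacle here is organizational rather than conceptual: getting the index bookkeeping right (the one-stage shift that pairs $\bzeta_{k-1}$ with $\bar{Q}_k\tp_k$) and tracking the factor $2$ consistently, since any slip in either would corrupt the telescoping that drives the cancellation. Once the increment identities are read off Algorithm \ref{alg:convex:proce} and primal feasibility is used to annihilate the $A_k^T\bar{Q}_{k+1}(\cdot)$ block, the induction closes cleanly. I expect to need neither the $\bq_k$-stationarity conditions (which hold automatically at the genuine solutions and serve only as a redundant consistency check) nor the assumptions beyond $\beta\in(0,\gamma_H)$, whose sole role is to make Algorithm \ref{alg:convex:proce} well-defined and to guarantee $\tp=\tpc$, $\tq=\tqc$ via \cite{Na2020Exponential}.
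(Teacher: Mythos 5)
Your proposal is correct and follows essentially the same route as the paper: both arguments rest on the gradient increment identities read off Algorithm \ref{alg:convex:proce} (e.g.\ $\nabla_{\bp_k}\tilde{O}_k=\nabla_{\bp_k}O_k-2\bar{Q}_k\bp_k+2A_k^T\bar{Q}_{k+1}\bp_{k+1}$ after using primal feasibility \eqref{pro:3b}), the shared primal solution from \cite{Na2020Exponential}, and uniqueness of the multipliers under LICQ. The only difference is presentational --- the paper substitutes the identities into the KKT system of $\mL\mQ\mP$ to exhibit $(\tpc,\tqc,\tzeta+2\bar{Q}\tp)$ as the unique primal--dual solution of $\mC\mL\mQ\mP$, whereas you subtract the two stationarity systems and close a backward induction --- and your index bookkeeping (pairing $\bzeta_{k-1}$ with $\bar{Q}_k\tp_k$) and cancellation via feasibility check out.
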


\begin{proof}
See Appendix \ref{pf:thm:1}.
\end{proof}

Using \eqref{equ:relation}, we first focus on $\mC\mL\mQ\mP$ and establish the~exponential decay result for $\tzetac$. Then we use relation \eqref{equ:relation} to bound $\tzeta$. {\red The motivation is that some nice properties of $\{\tilde{H}_k, \tilde{D}_k\}$ does not hold for $\{H_k, D_k\}$, which brings difficulties to directly study $\mL\mQ\mP$.}

The following theorem provides the \textit{stagewise} closed form of the dual solution for linear-quadratic problems (either $\mL\mQ\mP$ or $\mC\mL\mQ\mP$). {\red Our notation is the same as \cite[Lemma 3.5]{Na2020Exponential}, which provides the stagewise closed form of the primal solution.}

\begin{thm}\label{thm:2} 
	
Consider~$\mL\mQ\mP$ under Assumption \ref{ass:1}. Suppose $(\tp, \tq)$ is the primal solution. Then the dual solution $\tzeta$ at each stage is:
\begin{align}\label{equ:dual:form}
\tzeta_k =& -2K_{k+1}\tp_{k+1} + 2\sum_{i=k+1}^{N}(M_i^{k+1})^T\bl_i \nonumber\\
&  + 2\sum_{i=k+1}^{N-1}(V_i^{k+1})^TC_i\bl_i, \quad \forall k\in[-1,N-1],
\end{align}
with $K_N = Q_N$, $D_{N1} = D_N$, $D_{N2} = \0$, and $\forall k\in[N-1]$,
\begin{align*}
W_k = &R_k + B_k^TK_{k+1}B_k,\\
K_k = & - (B_k^TK_{k+1}A_k + S_k)^T W_k^{-1}(B_k^TK_{k+1}A_k + S_k)\\
& +Q_k + A_k^TK_{k+1}A_k,\\
P_k = & -W_k^{-1}(B_k^TK_{k+1}A_k + S_k),\\
E_k =& A_k + B_kP_k,\\
V_i^k =& -K_{i+1}\prod_{j=k}^iE_j, \hskip1.7cm \forall i\in[N-1],\\
M_i^k =& -(D_{i1} + D_{i2}P_i)\prod_{j=k}^{i-1}E_j, \quad \forall i\in[N].
\end{align*}
We obtain a similar formula for $\tzetac$ of $\mC\mL\mQ\mP$, where one replaces $\{H_k, D_k\}$ in the above recursions by $\{\tilde{H}_k, \tilde{D}_k\}$.
\end{thm}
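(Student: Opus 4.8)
The plan is to obtain \eqref{equ:dual:form} directly from the first-order (KKT) stationarity conditions of the linear-quadratic program \eqref{pro:3} and to verify it by backward induction on the stage index. Since the quadratic objective \eqref{pro:3a} carries no factor $\tfrac12$, every differentiation yields an overall factor $2$, which accounts for the constants in \eqref{equ:dual:form}. Forming the Lagrangian of \eqref{pro:3} with multipliers $\{\bzeta_k\}$ on \eqref{pro:3b}--\eqref{pro:3c}, I would record three families of equations: the control equation $R_k\bq_k+S_k\bp_k+D_{k2}^T\bl_k=\tfrac12 B_k^T\bzeta_k$ for $k\in[N-1]$; the costate equation $\bzeta_{k-1}=A_k^T\bzeta_k-2Q_k\bp_k-2S_k^T\bq_k-2D_{k1}^T\bl_k$ for $k\in[N-1]$ (the instance $k=0$ producing $\bzeta_{-1}$); and the terminal condition $\bzeta_{N-1}=-2Q_N\bp_N-2D_N^T\bl_N$. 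The terminal condition is precisely \eqref{equ:dual:form} at $k=N-1$ because $K_N=Q_N$, the $V$-sum is empty, and $M_N^N=-D_N$ (using $D_{N2}=\0$ and the empty product $\prod_{j=N}^{N-1}E_j=I$); this is the base case of the induction.

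For the inductive step I would assume \eqref{equ:dual:form} at stage $k$ and prove it at $k-1$. First I would substitute the inductive expression for $\bzeta_k$ and the dynamics \eqref{pro:3b} into the control equation and solve for $\bq_k$; collecting the $\bq_k$ terms reproduces $W_k=R_k+B_k^TK_{k+1}B_k$ and the feedback gain $P_k=-W_k^{-1}(B_k^TK_{k+1}A_k+S_k)$, so that $\bq_k=P_k\bp_k+W_k^{-1}\big[-(D_{k2}^T+B_k^TK_{k+1}C_k)\bl_k+B_k^T(\cdots)\big]$. These matrices coincide with the primal recursion of \cite[Lemma 3.5]{Na2020Exponential}, so the feedback law and the closed-loop transition $E_k=A_k+B_kP_k$ may simply be imported; the genuinely new content is the costate formula. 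Substituting $\bq_k$, the dynamics, and the inductive $\bzeta_k$ into the costate equation, the coefficient of $\bp_k$ in $\bzeta_{k-1}$ equals $-2(Q_k+A_k^TK_{k+1}E_k+S_k^TP_k)$, and expanding $E_k$ and $P_k$ shows this is exactly $-2K_k$; this is the Riccati recursion of the theorem.

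The bulk of the work is the bookkeeping of the feedforward ($\bl$) terms. For each $i\ge k+1$ I would collect three contributions to the coefficient of $\bl_i$ in $\bzeta_{k-1}$: the direct term from $A_k^T\bzeta_k$, the term routed through $\bp_{k+1}$ via $-2K_{k+1}B_k$ and the feedforward part of $\bq_k$, and the coupling from $-2S_k^T\bq_k$. The key identity is that their sum factors as $\big(A_k^T-(A_k^TK_{k+1}B_k+S_k^T)W_k^{-1}B_k^T\big)=E_k^T$, so the stage-$(k{+}1)$ coefficients are right-multiplied by $E_k$; with $\prod_{j=k}^{i}E_j=(\prod_{j=k+1}^{i}E_j)E_k$ this gives exactly $M_i^k=M_i^{k+1}E_k$ and $V_i^k=V_i^{k+1}E_k$. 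Finally, assembling the genuinely new $\bl_k$ terms---coming from $-2D_{k1}^T\bl_k$, the $\bl_k$ part of $-2S_k^T\bq_k$, and the $\bl_k$ part of $A_k^T\bzeta_k$ propagated through the $\bl_k$-dependence of $\bp_{k+1}$---and simplifying with the same $E_k^T$ identity yields $2(M_k^k)^T\bl_k+2(V_k^k)^TC_k\bl_k$ with $M_k^k=-(D_{k1}+D_{k2}P_k)$ and $V_k^k=-K_{k+1}E_k$, completing the step.

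I expect this last reassembly of the scattered $\bl_k$ contributions to be the main obstacle, since it is where cross terms such as $A_k^TK_{k+1}B_kW_k^{-1}B_k^TK_{k+1}C_k$ must cancel against one another in precisely the right pattern. Two further remarks. The only role of Assumption \ref{ass:1} is to guarantee that the Riccati recursion does not break down, i.e.\ each $W_k$ is invertible (a standard consequence of SOSC for linear--quadratic problems), so that \eqref{equ:dual:form} is well defined. Finally, the statement for $\mC\mL\mQ\mP$ follows verbatim after replacing $\{H_k,D_k\}$ by $\{\tilde{H}_k,\tilde{D}_k\}$, because the derivation never exploits indefiniteness or definiteness of the Hessians---only the invertibility of $W_k$, which for $\mC\mL\mQ\mP$ is immediate from $\tilde{H}_k\succ0$.
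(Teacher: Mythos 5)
Your proposal is correct and follows essentially the same route as the paper's proof: reverse induction on the costate KKT equation $\tzeta_{k-1}=A_k^T\tzeta_k-\nabla_{\bp_k}O_k$, with the base case from the terminal stationarity condition, the primal feedback law $\tq_k$ imported from \cite[Lemma 3.5(ii)]{Na2020Exponential}, and the same algebraic regrouping of the $\bp_k$ coefficient into $-2K_k$ and of the feedforward coefficients through $E_k^T$. The paper likewise uses Assumption \ref{ass:1} only to guarantee invertibility of $W_k$ via \cite[Lemma 3.5(i)]{Na2020Exponential}, so there is no substantive difference.
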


\begin{proof}
See Appendix \ref{pf:thm:2}. 
\end{proof}

We now study the dual solution $\tzetac$ of $\mC\mL\mQ\mP$. To enable concise notation, we abuse the notation $K_k, M_i^k, V_i^k$, and so on to denote the matrices computed by $\{\tilde{H}_k, \tilde{D}_k\}$.  The following lemma establishes the exponential decay for $\tzetac$.

\begin{lem}\label{lem:dual}
Let Assumptions \ref{ass:1}, \ref{ass:2}, \ref{ass:3} hold at the unperturbed solution $\tw$ of Problem \eqref{pro:2}. We execute Algorithm \ref{alg:convex:proce} with $\beta\in(0, \gamma_H)$. Let $\tzetac$ be the optimal dual solution of $\mC\mL\mQ\mP$. Then there exist constants $\Upsilon'>0$, $\rho\in(0,1)$, independent of $N$, such that for any $k\in[-1, N-1]$,
\begin{enumerate}[label=(\alph*),topsep=-0.9cm]
\setlength\itemsep{0.2em}
\item if $\bl = \be_i$ for $i\in[N]$, then $\|\tzetac_k\| \leq \Upsilon'\rho^{|k+1-i|}$;
\item if $\bl = \be_{-1}$, then $\|\tzetac_k\| \leq \Upsilon'\rho^{k+1}$.
\end{enumerate}
\end{lem}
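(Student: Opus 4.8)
The plan is to prove Lemma \ref{lem:dual} by exploiting the explicit stagewise formula for $\tzetac$ furnished by Theorem \ref{thm:2} (applied to $\mC\mL\mQ\mP$, i.e.\ with all quantities $K_k, P_k, E_k, V_i^k, M_i^k$ computed from the convexified matrices $\{\tilde{H}_k, \tilde{D}_k\}$) and then reducing every term appearing there to a product of a bounded factor and a contraction factor $\prod_j E_j$. Concretely, the formula reads
\begin{equation*}
\tzetac_k = -2K_{k+1}\tpc_{k+1} + 2\sum_{i=k+1}^{N}(M_i^{k+1})^T\bl_i + 2\sum_{i=k+1}^{N-1}(V_i^{k+1})^TC_i\bl_i,
\end{equation*}
so my first task would be to bound each of the three groups of terms uniformly in $N$. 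Since the perturbation is concentrated at a single stage, $\bl=\be_i$, every sum collapses to a single summand (the one at the perturbed index $i$, when $i\ge k+1$), which is exactly what produces the clean geometric decay $\rho^{|k+1-i|}$.

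The backbone of the argument is a uniform contraction estimate on the transition matrices $E_j = A_j + B_j P_j$ of the convexified closed-loop system. First I would invoke the machinery already established in \cite{Na2020Exponential} for the primal EDS (Theorem \ref{thm:primal:sensitivity}): under Assumptions \ref{ass:1}--\ref{ass:3}, the convexified Riccati matrices $K_k$ are uniformly bounded (both $\|K_k\|$ and $\|W_k^{-1}\|$ bounded independently of $N$, using $\tilde H_k \succeq \beta I$ and the controllability/boundedness hypotheses), and the closed-loop products satisfy $\|\prod_{j=k}^{i}E_j\|\le \bar C\rho^{\,i-k+1}$ for the same $\rho\in(0,1)$ appearing in the primal result. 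This is precisely the estimate that drives the exponential decay of $\tpc_k$, so I would extract it as the key lemma rather than rederiving it. With that in hand, the factors $V_i^k = -K_{i+1}\prod_{j=k}^{i}E_j$ and $M_i^k = -(\tilde D_{i1}+\tilde D_{i2}P_i)\prod_{j=k}^{i-1}E_j$ are immediately bounded by $\Upsilon''\rho^{\,i-k}$ for a constant $\Upsilon''$ built from $\Upsilon_{\text{upper}}$, $\bar C$, and the uniform bounds on $K, P, \tilde D, C$.

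I would then assemble the final bound by cases. For part (a), $\bl=\be_i$ with $i\in[N]$: if $i\ge k+1$, the $K_{k+1}\tpc_{k+1}$ term is bounded via the primal EDS (Theorem \ref{thm:primal:sensitivity}) by $\|K_{k+1}\|\cdot\|\tpc_{k+1}\|\le \Upsilon\rho^{|k+1-i|}$ up to a constant, while the single surviving summand $(M_i^{k+1})^T\be_i$ (and, for $i\le N-1$, $(V_i^{k+1})^TC_i\be_i$) is bounded by $\Upsilon''\rho^{\,i-(k+1)}=\Upsilon''\rho^{|k+1-i|}$; collecting constants gives $\|\tzetac_k\|\le\Upsilon'\rho^{|k+1-i|}$. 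The case $i\le k$ is even simpler: then both sums are empty and only the $K_{k+1}\tpc_{k+1}$ term remains, which the primal result bounds by $\rho^{|k+1-i|}$. Part (b), $\bl=\be_{-1}$, is the boundary analogue: the inhomogeneous sums vanish (the index $-1$ never appears in them), so $\tzetac_k=-2K_{k+1}\tpc_{k+1}$ and the primal estimate for $\bl=\be_{-1}$ gives $\|\tpc_{k+1}\|\le\Upsilon\rho^{k+1}$, hence $\|\tzetac_k\|\le\Upsilon'\rho^{k+1}$.

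The main obstacle I anticipate is not the case analysis but establishing the uniform-in-$N$ boundedness of $\{K_k, W_k^{-1}\}$ and the uniform contraction $\|\prod E_j\|\le\bar C\rho^{\ell}$ for the \emph{convexified} system. Positive definiteness $\tilde H_k\succeq\beta I$ is what makes the convexified Riccati recursion well-posed and dissipative, but one still has to verify that the convexification of Algorithm \ref{alg:convex:proce} does not destroy the uniform controllability needed to get a gap bounded away from $1$ in $\rho$; since the constraint matrices $A_k,B_k,C_k$ are left untouched by the convexification, controllability is preserved verbatim, and I would lean on that observation to transport the primal decay constants directly to the dual bound. Everything else is a matter of chaining the triangle inequality with the geometric factor and absorbing finitely many uniform constants into $\Upsilon'$.
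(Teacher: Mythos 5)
Your proposal is correct and follows essentially the same route as the paper: it invokes the stagewise closed form of Theorem \ref{thm:2} for $\mC\mL\mQ\mP$, splits into the cases $i\le k$, $k+1\le i\le N-1$, $i=N$, and $\bl=\be_{-1}$, and bounds the $K_{k+1}\tpc_{k+1}$ term via the primal EDS while bounding $M_i^{k+1}$ and $V_i^{k+1}$ geometrically using the uniform boundedness and contraction estimates inherited from \cite{Na2020Exponential} (the paper simply cites those bounds directly rather than rederiving them from the products of $E_j$). No gaps.
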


\begin{proof}
See Appendix \ref{pf:lem:dual}.
\end{proof}

We note that the constant $\rho$ in this result is the same as the one used in Theorem \ref{thm:primal:sensitivity}. Combining Lemma \ref{lem:dual} with Theorem~\ref{thm:1}, we can finally bound the dual solution for $\mL\mQ\mP$.

\begin{thm}[Dual EDS]\label{thm:dual:sensitivity}

Let Assumptions \ref{ass:1}, \ref{ass:2}, \ref{ass:3} hold at the unperturbed solution $\tw$ of Problem \eqref{pro:2}. Then for any $k\in[-1, N-1]$, Lemma \ref{lem:dual} holds for $\tzeta_k$ with some constants $\Upsilon''>0$, $\rho\in(0,1)$ that are determined by constants in the assumptions and hence are independent of $N$.
\end{thm}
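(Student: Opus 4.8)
The plan is to leverage the affine relation between the dual solutions of $\mL\mQ\mP$ and $\mC\mL\mQ\mP$ recorded in Theorem \ref{thm:1}, so that the desired decay for $\tzeta$ follows by combining the already-established decay of $\tzetac$ (Lemma \ref{lem:dual}) with the primal decay (Theorem \ref{thm:primal:sensitivity}). The whole point of passing through $\mC\mL\mQ\mP$ is that its positive-definite Riccati structure makes $\tzetac$ tractable; Theorem \ref{thm:1} then lets us transfer back to $\mL\mQ\mP$ at the cost of a single correction term involving the convexification shift $\bar{Q}$.

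First I would read off the stagewise form of the identity \eqref{equ:relation}. Since $\bar{Q} = \diag(\bar{Q}_0, \ldots, \bar{Q}_N)$ acts on $\tp = \tp_{0:N}$ while the stacked dual is $\tzeta = \tzeta_{-1:N-1}$, aligning the blocks position-by-position shows that $\tzeta = \tzetac - 2\bar{Q}\tp$ reads, componentwise, $\tzeta_k = \tzetac_k - 2\bar{Q}_{k+1}\tp_{k+1}$ for every $k \in [-1, N-1]$. Taking norms and applying the triangle inequality gives $\|\tzeta_k\| \leq \|\tzetac_k\| + 2\|\bar{Q}_{k+1}\|\,\|\tp_{k+1}\|$, which reduces the problem to bounding the two summands.

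Next I would bound the two terms and, crucially, observe that they decay at the \emph{same} rate. For part (a) with $\bl = \be_i$, Lemma \ref{lem:dual} gives $\|\tzetac_k\| \leq \Upsilon'\rho^{|k+1-i|}$, and Theorem \ref{thm:primal:sensitivity} applied at stage $k+1$ gives $\|\tp_{k+1}\| \leq \Upsilon\rho^{|(k+1)-i|}$ (the terminal case $k+1 = N$ is covered since $\rho^{N-i} = \rho^{|k+1-i|}$ for $i \in [N]$). Both exponents coincide with $\rho^{|k+1-i|}$, which is exactly the form claimed for $\tzeta_k$; the same matching occurs in part (b) with $\be_{-1}$, where both carry $\rho^{k+1}$. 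Hence, provided $\|\bar{Q}_k\| \leq \bar{\Upsilon}$ for some $\bar{\Upsilon}$ independent of $N$, choosing $\Upsilon'' = \Upsilon' + 2\bar{\Upsilon}\,\Upsilon$ settles both cases, and since $\Upsilon', \Upsilon, \bar{\Upsilon}, \rho$ depend only on the assumption constants, so does $\Upsilon''$.

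The main obstacle is therefore the uniform (in $N$) boundedness of the shift matrices $\{\bar{Q}_k\}$ produced by the backward recursion in Algorithm \ref{alg:convex:proce}, since a priori this Riccati-type recursion could grow as one iterates backward from stage $N$. I would establish $\|\bar{Q}_k\| \leq \bar{\Upsilon}$ using the same conditioning guarantees that underlie the convexification: with $\beta \in (0, \gamma_H)$, Assumptions \ref{ass:1}--\ref{ass:3} ensure that the intermediate blocks $\tilde{R}_k = R_k + B_k^T\bar{Q}_{k+1}B_k$ are uniformly positive definite and that the closed-loop products $\prod_j E_j$ contract geometrically, so the iterates $\hat{Q}_k$ and $\tilde{Q}_k$—and hence $\bar{Q}_k = \hat{Q}_k - \tilde{Q}_k$—remain uniformly bounded. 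This uniform bound is in fact already part of the machinery developed in \cite{Na2020Exponential} and in the proof of Lemma \ref{lem:dual}; once it is invoked, the remaining combination is the routine triangle-inequality estimate above.
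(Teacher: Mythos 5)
Your proposal is correct and follows essentially the same route as the paper: apply the stagewise form of Theorem \ref{thm:1}, $\tzeta_k = \tzetac_k - 2\bar{Q}_{k+1}\tp_{k+1}$, use the triangle inequality, bound the two terms by Lemma \ref{lem:dual} and Theorem \ref{thm:primal:sensitivity} (whose rates indeed coincide at $\rho^{|k+1-i|}$), and invoke the uniform bound $\|\bar{Q}_{k+1}\|\leq \Upsilon_Q$ to set $\Upsilon''=\Upsilon'+2\Upsilon_Q\Upsilon$. The only cosmetic difference is that the paper simply cites \cite[Theorem 3.8, Claim 1]{Na2020Exponential} and \cite[Lemma 4.3]{Na2020Exponential} for the uniform boundedness of $\bar{Q}_k$, whereas you sketch the underlying Riccati-conditioning argument before acknowledging it is part of that cited machinery.
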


\begin{proof}
See Appendix \ref{pf:thm:dual:sensitivity}.
\end{proof}

Combining Theorems \ref{thm:primal:sensitivity} and \ref{thm:dual:sensitivity}, we get the desired primal-dual EDS result. The perturbation on the left and right boundaries $\{-1,N\}$ are of particular interest in the following sections. Redefining $\Upsilon\leftarrow {\red \sqrt{3}}\max(\Upsilon,\Upsilon''\rho^{-1})$ yields the following\footnote{\red This is because $\|\txi_k\| \leq \sqrt{3}(\|\tp_k\|\vee \|\tq_k\|\vee \|\tzeta_k\|)$. One can apply Theorem \ref{thm:primal:sensitivity} for bounding $\|\tp_k\|\vee \|\tq_k\|$ and Theorem \ref{thm:dual:sensitivity} for bounding $\|\tzeta_k\|$.}:
\begin{enumerate}[label=(\alph*),topsep=-0cm]
\setlength\itemsep{0.3em}
\item if $\bl = \be_N$, then $\|\txi_k\| \leq \Upsilon\rho^{N-k}$;
\item if $\bl = \be_{-1}$, then $\|\txi_k\| \leq \Upsilon\rho^{k}$,
\end{enumerate}
where $\txi_k = (\tp_k; \tq_k; \tzeta_k)$ is defined in \eqref{equ:direc:deriva}. The above exponential property plays a key role in the analysis of the algorithm.

\section{Overlapping Schwarz Decomposition}\label{sec:4}

In this section we introduce the overlapping Schwarz scheme and establish its convergence. 

\subsection{Setting}

The full horizon of Problem \eqref{pro:1} is $[N]$. Suppose $T$ is the number of short horizons and $\tau$ is the overlap size. Then we can~decompose $[N]$ into $T$ consecutive intervals as
\begin{equation*}
[N] = \bigcup_{i=0}^{T-1}[m_{i}, m_{i+1}],
\end{equation*}
where $m_0=0<m_1<\ldots<m_{T}=N$. Moreover, we define the \textit{expanded} (overlapping) boundaries: 
\begin{equation}\label{equ:n1in2i}
n^1_i = (m_i - \tau) \vee 0, \quad n_i^2 = (m_{i+1} + \tau) \wedge N.
\end{equation}
Then we have $[N]=\bigcup_{i=0}^{T-1}[n_i^1,n_i^2]$ and 
\begin{equation*}
[m_i, m_{i+1}] \subset [n_i^1, n_i^2], \quad\quad \forall i\in[T-1].
\end{equation*}
In the overlapping Schwarz scheme, the truncated approximation within the interval $[m_i, m_{i+1}]$ is obtained by first solving a subproblem over an expanded interval $[n_i^1,n_i^2]$, then {\em discarding} the piece of the solution associated with the stages acquired from the expansion \eqref{equ:n1in2i}. We now introduce the subproblem for the expanded short horizon $[n_i^1,n_i^2]$. For any $i\in[T-1]$, the subproblem $\P_i$ for the interval $[n_i^1, n_i^2]$ is defined as
\begin{subequations}\label{pro:4}
\begin{align}
\min_{\substack{\{\bx_k\},\{\bu_{k}\}}} \;\;&
\sum_{k=n_i^1}^{n_i^2-1} g_k(\bx_k, \bu_k) + \tilde{g}_{n^2_i}(\bx_{n_i^2};\bbw_{n^2_i}),  \label{pro:4a}\\
\st\ \ & \bx_{k+1} = f_k(\bx_k, \bu_k), \;  k\in[n_i^1, n_i^2-1],\;(\blambda_k) \label{pro:4b}\\
& \bx_{n_i^1} = \bbx_{n_i^1},\quad (\blambda_{n^1_i-1}). \label{pro:4c}
\end{align}
\end{subequations}
Here, $\tilde{g}_{n^2_i}(\bx_{n_i^2};\bbw_{n^2_i})$ is the terminal cost function adjusted from $g_{n_i^2}(\bx_{n_i^2}, \bu_{n_i^2})$. It is parameterized by $\bbw_{n^2_i}=(\bbx_{n_i^2};\bbu_{n_i^2};\bblambda_{n_i^2})$ and is formally defined as
\begin{align*}
\tilde{g}_{n^2_i}(\bx_{n_i^2};\bbw_{n^2_i})= 
\begin{cases}
g_{n_i^2}(\bx_{n_i^2}, \bbu_{n_i^2}) -\bblambda_{n_i^2}^Tf_{n_i^2}(\bx_{n_i^2}, \bbu_{n_i^2})\\
\qquad+ \frac{\mu}{2}\|\bx_{n_i^2} - \bbx_{n_i^2}\|^2,\hskip 0.5cm i\in[T-2],\\
g_{N}(\bx_{N}),\hskip 2.6cm i=T-1,
\end{cases}  
\end{align*}
where $\mu$ is a uniform penalty parameter that {\em does not depend~on $i$}. In other words, $\mu$ is set uniformly over all subproblems. When $i\neq T-1$, the terminal cost is adjusted by a dual penalty and a quadratic penalty on the state. Intuitively, the dual penalty reduces the KKT residuals, while the quadratic penalty ensures that SOSC holds for subproblems provided it holds for the full problem and $\mu$ is set large enough~(see~Lemma~\ref{lem:mu}).~{\red The~formulation \eqref{pro:4} is adopted from \cite{Na2020Superconvergence}, which differs from the one in \cite{Shin2019Parallel}. In particular, \cite{Shin2019Parallel} imposed assumptions on subproblems, while we impose (standard) assumptions directly on the full~problem, which is more reasonable. An alternative~subproblem~formulation can be found in \cite[(5)]{Diehl2005Nominal}, where the terminal adjustment on the cost is replaced by a terminal constraint.}

We note that Problem \eqref{pro:4} is a parametric subproblem with $\P_i = \P_i(\bbx_{n_i^1}, \bbw_{n_i^2})$. The parameter $(\bbx_{n_i^1}, \bbw_{n_i^2})$ consists of the primal-dual data on both ends of the horizon (i.e. domain boundaries). Each time we have to first specify the parameter and then solve the subproblem. For $i = T-1$, $\bbw_{n^2_i}$ is not necessary (see the definition of $\tilde{g}_{N}(\cdot)$). The formal justification of the formulation in \eqref{pro:4} will be given in Lemma \ref{lem:2.5}.

\begin{definition}\label{def:3}

We define the following quantities for subproblem $\P_i$ with $i\in[T-1]$:
\begin{enumerate}[label=(\alph*),topsep=-0.9cm]
\setlength\itemsep{0.3em}
\item we let $\bw_{[i]}$ be the primal-dual variable of $\P_i$, i.e. $\bw_{[i]} = (\blambda_{n^1_i-1};\bw_{n^1_i:n^2_{i}-1};\bx_{n^2_{i}})$.

\item we let $\bw_{(i)}$ be the primal-dual variable of $\P_i$ on the~\textbf{non-overlapping} subdomains, i.e. $\bw_{(i)} = \bw_{m_i:m_{i+1}-1}$ (for~the boundaries, $\bw_{(0)}$ and $\bw_{(T-1)}$, are adjusted by letting $\bw_{(0)} = \bw_{-1:m_{1}-1}$ and $\bw_{(T-1)} = \bw_{m_{T-1}:m_{T}}$).

\item we let $\bw_{[-i]}$ be the parameter variable of $\P_i$, i.e. $\bw_{[-i]} = (\bx_{n^1_i};\bw_{n^2_{i}})$ (the boundary $\bw_{[-(T-1)]}$ is adjusted by letting $\bw_{[-(T-1)]} = \bx_{n_{T-1}^1}$).

\item we let $n_{[i]}$, $n_{(i)}$, $n_{[-i]}$ be the corresponding dimensions of $\bw_{[i]}, \bw_{(i)}, \bw_{[-i]}$.

\item we let $\bw^\dag_{[i]}(\cdot):\mR^{n_{[-i]}}\rightarrow \mR^{n_{[i]}}$ be the solution mapping of $\P_i$, i.e. $\bw^\dag_{[i]}(\bw_{[-i]})$ is a local solution of $\P_i(\bw_{[-i]})$.

\item for $k \in [n_i^1-1,n_i^2]$, we use $T_{k}(\bw_{[i]})$ to extract the variable on stage $k$ of $\bw_{[i]}$; we also use $T_{(i)}(\bw_{[i]})$ to extract variables of $\bw_{[i]}$ that are on non-overlapping subdomains.
\end{enumerate}
\end{definition}

The solution of $\mathcal{P}_i(\cdot)$ may not be unique. The issues of the existence and uniqueness of the solution will be resolved in Theorem \ref{thm:sol-conti}. For now, we assume that the solution $\bw_{[i]}^\dag(\cdot)$ exists and consider this as one of the local solutions.

We now formally present the overlapping Schwarz scheme~in Algorithm \ref{alg:decom:procedure}. Here, we use the superscript $(\cdot)^{(\ell)}$ to denote its value at the $\ell$-th iteration. In addition, we suppose that~the problem information (e.g. $\{f_k\}_{k=0}^{N-1}$, $\{g_k\}_{k=0}^{N}$) and the decomposition information (e.g. $\{m_i\}_{i=0}^T$ and $\{[n^1_i,n^2_i]\}_{i=0}^{T-1}$) are already given to the algorithm. Thus, the algorithm is well defined using only the initial guess $\bw^{(0)}$ of the full primal-dual solution as an input. Note that $\bx^{(0)}_{0}$ should match the initial state $\bbx_0$ given to the original problem \eqref{pro:1}.

\begin{algorithm}[!b]
\caption{Overlapping Schwarz Decomposition}
\label{alg:decom:procedure}
\begin{algorithmic}[1]
\STATE \textbf{Input:} $\bw^{(0)}$
\FOR {$\ell = 0,1,\ldots $}
\FOR {(in parallel) $i = 0,1,\ldots, T-1$}
\STATE $\bw_{(i)}^{(\ell +1)}=T_{(i)}({\bw}^\dag_{[i]}(\bw^{(\ell)}_{[-i]}))$;
\ENDFOR
\ENDFOR
\STATE \textbf{Output:} $\bw^{(\ell)}$
\end{algorithmic}
\end{algorithm}

Starting with $\bw^{(0)}$, the procedure iteratively finds the primal-dual solution $\bw^{(\ell)}$ for \eqref{pro:1}. At each iteration $\ell=0,1,\ldots$, the subproblems $\{\P_i(\bw_{[-i]}^{(\ell)})\}_{i=0}^{T-1}$ are solved to obtain the short-horizon solutions $\bw^\dag_{[i]}(\bw_{[-i]}^{(\ell)})$ over the expanded subdomains $\{[n^1_i,n^2_i]\}_{i=0}^{T-1}$. Here, we note that the previous primal-dual iterate enters into the subproblems as boundary conditions $\bw_{[-i]}^{(\ell)}=(\bx_{n^1_i}^{(\ell)};\bw_{n^2_i}^{(\ell)})$. This step is illustrated in Fig. \ref{fig:schematic}. The solution is then {\it restricted} to the non-overlapping subdomains $\{[m_i,m_{i+1}]\}_{i=0}^{T-1}$ by applying the operator $T_{(i)}(\cdot)$ (cf. Definition \ref{def:3}(f)), which is illustrated in Fig. \ref{fig:restrict}. After that, one concatenates the short-horizon solutions by $\bw^{(\ell+1)}=(\bw^{(\ell+1)}_{(0)};\ldots;\bw^{(\ell+1)}_{(T-1)})$. We do not explicitly write this step in Algorithm \ref{alg:decom:procedure} since updating the subvectors $\bw^{(\ell)}_{(i)}$ of $\bw^{(\ell)}$ over $i\in[T-1]$ effectively concatenates the short-horizon solutions. In Proposition \ref{prop:criteria} we provide stopping criteria for the scheme.

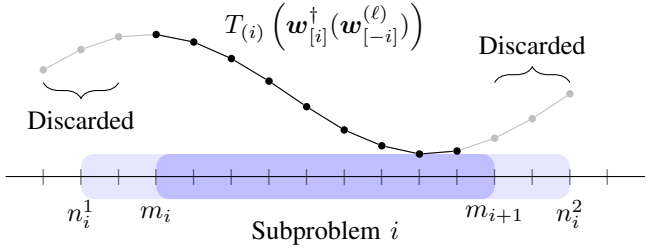
\begin{figure}[t]
  \centering
  \begin{tikzpicture}[scale=1, every node/.style={transform shape}]
    \fill[rounded corners=5pt,blue!10!white,line width=3pt](3.5,.2) rectangle ++(6.5,.6);
    \fill[rounded corners=5pt,blue!25!white,line width=3pt](4.5,.2) rectangle ++(4.5,.6);
    \foreach \x in {7,...,22}
    \node[scale=.6]  at (\x/2-1/2,.5) {$|$};
    \def\sh{0.6};
    \node[fill,circle,scale=.3,lightgray] (n1) at (3,1.32+\sh) {};
    \node[fill,circle,scale=.3,lightgray] (n2) at (3.5,1.59+\sh) {};
    \node[fill,circle,scale=.3,lightgray] (n3) at (4,1.76+\sh) {};
    \node[fill,circle,scale=.3] (n4) at (4.5,1.79+\sh) {};
    \node[fill,circle,scale=.3] (n5) at (5,1.69+\sh) {};
    \node[fill,circle,scale=.3] (n6) at (5.5,1.47+\sh) {};
    \node[fill,circle,scale=.3] (n7) at (6,1.17+\sh) {};
    \node[fill,circle,scale=.3] (n8) at (6.5,0.83+\sh) {};
    \node[fill,circle,scale=.3] (n9) at (7,0.52+\sh) {};
    \node[fill,circle,scale=.3] (n10) at (7.5,0.31+\sh) {};
    \node[fill,circle,scale=.3] (n11) at (8,.20+\sh) {};
    \node[fill,circle,scale=.3] (n12) at (8.5,.24+\sh) {};
    \node[fill,circle,scale=.3,lightgray] (n13) at (9,.41+\sh) {};
    \node[fill,circle,scale=.3,lightgray] (n14) at (9.5,.67+\sh) {};
    \node[fill,circle,scale=.3,lightgray] (n15) at (10,1+\sh) {};

    \draw[lightgray] (n1)--(n2)--(n3)--(n4)--(n5)--(n6)--(n7)--(n8)--(n9)--(n10)--(n11)--(n12)--(n13)--(n14)--(n15);
    \draw[black] (n4)--(n5)--(n6)--(n7)--(n8)--(n9)--(n10)--(n11)--(n12);
    \draw[-] (2.5,.5)--(11,.5);
    \node at (4.5,0) {$m_{i}$};
    \node at (9,0) {$m_{i+1}$};
    \node at (3.5,0) {$n^1_{i}$};
    \node at (10,0) {$n^2_{i}$};

    \draw [decorate,decoration={brace,amplitude=6pt}] (4,1.75)-- (3,1.75);
    \node at (3.5,1.25) {Discarded};
    \draw [decorate,decoration={brace,amplitude=6pt}] (9,1.75) -- (10,1.75);
    \node at (9.5,2.25) {Discarded};
    
    \node at (6.75,2.5) {$T_{(i)}\rbr{\bw^\dag_{[i]}(\bw^{(\ell)}_{[-i]})}$};
    \node at (6.75,-0.25) {Subproblem $i$};
  \end{tikzpicture}
  \caption{Schematic of restriction operation.}\label{fig:restrict}
\end{figure}

Observe that, unless $\tau=0$, the boundary conditions $\bw_{[-i]}^{(\ell)}$ of subproblem $i$ are set by the output of other subproblems (in particular, adjacent ones if $\tau$ does not exceed the horizon lengths of the adjacent problems). Thus, the procedure aims to achieve coordination across the subproblems by exchanging the primal-dual solution information. Furthermore, one can observe that $n^1_i,n^2_i$ are at least $\tau$ stages apart from $[m_{i}, m_{i+1})$. This guarantees all iterates improve (as shown in the next section) because the adverse effect of misspecification of boundary conditions has enough stages to be damped. We can hence anticipate that having larger $\tau$ makes the convergence faster at the cost of having moderately larger subproblems.

We emphasize that Algorithm \ref{alg:decom:procedure} can be implemented in parallel, although the subproblems are coupled. This is because subproblems are parameterized by boundary variables $\{\bw_{[-i]} = (\bx_{n^1_i};\bw_{n^2_{i}})\}_i$ (as defined in Problem~\eqref{pro:4}) and, in each iteration, once $\{\bw_{[-i]}^{(\ell)}\}_i$ are specified all subproblems can be solved independently. To ensure convergence, we require information exchange between subproblems after each iteration. This is similar in spirit to traditional Jacobi and Gauss-Seidel schemes.

\subsection{Convergence Analysis}

We now establish convergence for Algorithm \ref{alg:decom:procedure}. A sketch~of convergence analysis is as follows. (i) We extend Assumptions \ref{ass:1}, \ref{ass:2}, \ref{ass:3} to a neighborhood of a local solution $\bw^\star$ of Problem \eqref{pro:1}. (ii) We show that $\bw^\dag_{[i]}(\tw_{[-i]}) = \tw_{[i]}$. (iii) We bound the difference of solutions $\bw^\dag_{[i]}(\bar{\bw}_{[-i]})$ and $\bw^\dag_{[i]}(\tw_{[-i]})$ by the difference in boundary conditions $\|\bar{\bw}_{[-i]} - \tw_{[-i]}\|$ using primal-dual EDS results in Section \ref{sec:2}. (iv) We combine (i)-(iii) and derive an explicit estimate of the local convergence rate.

\begin{definition}\label{def:2}
	
We define $\|(\cdot)\|_{\bw}$ as the stagewise max $\ell_2$-norm of the variable $(\cdot)$. For example,
\begin{align*}
\|\bz_k\|_{\bw} & =  \|\bx_k\| \vee \|\bu_k\|, \hskip1.2cm \|\bw_k\|_{\bw} = \|\bz_k\|_{\bw} \vee \|\blambda_k\|,\\
\|\bw \|_{\bw} & = \max_{k\in[-1,N]} \|\bw_k \|_{\bw}, \quad \; \|\bw_{[-i]} \|_{\bw} =  \|\bx_{n^1_i} \|\vee \|\bw_{n^2_i}\|_{\bw},\\
\|\bw_{[i]}\|_{\bw} & =  \|\blambda_{n^1_i-1}\|\vee\max_{k\in[n^1_i,n^2_i)} \|\bw_{k}\|_{\bw}\vee\|\bx_{n^2_i}\|.
\end{align*}
Further, we define $\N_{\varepsilon}(\cdot)$ as the (closed) $\varepsilon$-neighborhood of the variable $(\cdot)$ based on the norm $\|(\cdot)\|_{\bw}$. For example,
\begin{align*}
\N_{\varepsilon}(\tz_{k}) &= \{\bz_{k}\in\mR^{n_{\bz_k}}: \Vert \bz_{k}-\tz_{k}\Vert_{\bw} \leq \varepsilon \},\\
\N_{\varepsilon}(\tw_{k}) &= \{\bw_{k}\in\mR^{n_{\bw_k}}: \Vert \bw_{k}-\tw_k\Vert_{\bw} \leq \varepsilon \}.
\end{align*}
Similarly we have $\N_{\varepsilon}(\tw)$, $\N_{\varepsilon}(\tw_{[-i]})$, $\N_{\varepsilon}(\tw_{[i]})$.
\end{definition}

The norm $\|(\cdot)\|_{\bw}$ in Definition \ref{def:2} takes the maximum of the $\ell_2$-norms of stagewise state, control, and dual variables over the corresponding horizon. One can verify that it is indeed a vector norm (triangle inequality, absolute homogeneity, and positive definiteness hold). With Definition \ref{def:2}, here we extend assumptions in Section \ref{sec:2}, which are stated for a local solution point, to the {\it neighborhood} of such a local solution. In what follows, we inherit the notation in Definition \ref{def:1} but drop the reference variable $\bd$. We denote $A_k, B_k$ to be the Jacobian of $f_k(\bx_k, \bu_k)$ with respect to $\bx_k$ and $\bu_k$, respectively. $H_k$ is the Hessian of the Lagrange function with respect to $(\bx_k, \bu_k)$.

\begin{assum}\label{ass:4}

For a local primal-dual solution $\bw^\star$ of Problem \eqref{pro:1}, we assume that there exists $\varepsilon>0$ such that:
\begin{enumerate}[label=(\alph*),topsep=-0.9cm]
\setlength\itemsep{0.2em}
\item There exists {\red a uniform} constant $\gamma_H>0$ such that 
\begin{equation*}
ReH(\bw) \succeq \gamma_H I,
\end{equation*}
for any $\bw=\bw_{-1:N}$ with $\bw_k \in \N_\varepsilon(\tw_k)$ for $k\in [n_i^1, n_i^2]$ for some $i\in[T-1]$, and $\bw_k=\tw_k$ otherwise.

\item There exists {\red a uniform} constant $\Upsilon_{\text{upper}}$ such that $\forall k\in[N]$:
\begin{equation*}
\|H_k(\bw_k)\| \vee \|A_k(\bz_k)\|\vee \|B_k(\bz_k)\|\leq \Upsilon_{\text{upper}},
\end{equation*}
for any $\bw_k\in\N_\varepsilon(\tw_k)$.	
	
\item There exist {\red uniform} constants $\gamma_C$, $t>0$ such that $\forall k\in[N-t]$ and some $t_k\in[1, t]$:
\begin{equation*}
\Xi_{k, t_k}(\bz_{k:k+t_k-1})\Xi_{k, t_k}(\bz_{k:k+t_k-1})^T\succeq  \gamma_C I,
\end{equation*}
for any $\bz_{k:k+t_k-1}\in\N_\varepsilon(\tz_{k:k+t_k-1})$.
\end{enumerate}
\end{assum}

We now show that the subproblem's {\red solution} recover the {\red truncated} full-horizon solution if perfect boundary conditions are given. We first state the following lemma, which {\red adapts \cite[Lemma 1 and Theorem 1]{Na2020Superconvergence} and suggests that the subproblem formulation \eqref{pro:4}} inherits uniform SOSC from the full problem, provided $\mu$ is sufficiently large.

\begin{lem}\label{lem:mu} 

Let Assumption \ref{ass:4} hold for the local solution~$\bw^\star$ of Problem \eqref{pro:1} and $\varepsilon>0$. There exists $\bar{\mu}$ depending on $(\Upsilon_{\text{upper}}, \gamma_C, t)$ only such that if $\mu\geq \bar{\mu}$, $\bw_{[i]}\in\N_{\varepsilon}(\bw^\star_{[i]})$,~and $\bbw_{[-i]}\in\N_{\varepsilon}(\bw^\star_{[-i]})$, then the reduced Hessian of subproblem $\mP_i(\bbw_{[-i]})$ at $\bw_{[i]}$ is lower bounded by $\gamma_H$ as well. That is,
\begin{equation*}
ReH^i(\bw_{[i]};\bbw_{[-i]})\succeq \gamma_H I,
\end{equation*}
where $ReH^i(\bw_{[i]};\bbw_{[-i]})$ denotes the reduced Hessian of $\mP_i(\bbw_{[-i]})$ evaluated at $\bw_{[i]}$,  defined similarly as in Definition~\ref{def:1}.

\end{lem}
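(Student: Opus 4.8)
The plan is to compare the reduced Hessian of the subproblem $\mP_i(\bbw_{[-i]})$ with that of the full problem and show that the only difference, coming from the modified terminal cost $\tg_{n_i^2}$, contributes a term that is controlled by the penalty $\mu$. First I would write down explicitly the reduced Hessian $ReH^i(\bw_{[i]};\bbw_{[-i]}) = (Z^i)^T H^i Z^i$, where $H^i = \diag(H_{n_i^1},\ldots,H_{n_i^2-1}, H^i_{n_i^2})$ collects the stagewise Hessian blocks of the subproblem's Lagrangian and $Z^i$ spans the null space of the subproblem's constraint Jacobian $G^i$. The key observation is that, for the interior stages $k\in[n_i^1, n_i^2-1]$, the Hessian blocks $H_k$ and the constraint blocks $A_k, B_k$ of $\mP_i$ coincide \emph{exactly} with those of the full problem \eqref{pro:1} evaluated at the same point, because the running cost $g_k$ and the dynamics $f_k$ are identical; the dual penalty term $-\bblambda_{n_i^2}^T f_{n_i^2}$ in $\tg_{n_i^2}$ is linear in $\bx_{n_i^2}$ and thus does not alter the terminal Hessian, whereas the quadratic penalty $\frac{\mu}{2}\|\bx_{n_i^2}-\bbx_{n_i^2}\|^2$ adds exactly $\mu I$ to the terminal block. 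Hence $H^i_{n_i^2} = Q^{\mathrm{full}}_{n_i^2} + \mu I$ for some bounded $Q^{\mathrm{full}}_{n_i^2}$ (bounded by $\Upsilon_{\text{upper}}$ via Assumption~\ref{ass:4}(b)).

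Next I would invoke a Riccati/controllability argument analogous to the one establishing uniform SOSC for the full problem. The point is that the full problem satisfies $ReH \succeq \gamma_H I$ under Assumption~\ref{ass:4}(a), but the subproblem is only a \emph{truncation} of the full horizon, so its reduced Hessian is \emph{not} automatically bounded below: the terminal block of the subproblem is the free terminal cost $Q^{\mathrm{full}}_{n_i^2}$ rather than the ``correct'' value function $K_{n_i^2}$ that the full problem would propagate backward. The role of $\mu$ is precisely to compensate for this missing tail. I would run the backward Riccati recursion (as in Theorem~\ref{thm:2}, with $K_{n_i^2} = Q^{\mathrm{full}}_{n_i^2}+\mu I$ as the terminal seed) and show that uniform controllability (Assumption~\ref{ass:4}(c)) guarantees the recursion stays well-conditioned: the $t$-step controllability bound $\Xi_{k,t_k}\Xi_{k,t_k}^T\succeq\gamma_C I$ ensures that each Riccati update, composed over the uniform window of length $t$, is strictly positive definite once the terminal seed is large enough. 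Equivalently, using the standard equivalence between $ReH^i\succeq\gamma_H I$ and positive definiteness of the reduced KKT system, I would show that for any feasible direction $\bp$ of the subproblem (i.e. $\bp_{k+1}=A_k\bp_k+B_k\bq_k$, $\bp_{n_i^1}=0$) the quadratic form satisfies
\begin{equation*}
\sum_{k=n_i^1}^{n_i^2-1}\!\biggl(\begin{smallmatrix}\bp_k\\\bq_k\end{smallmatrix}\biggr)^{\!T}\!\! H_k \biggl(\begin{smallmatrix}\bp_k\\\bq_k\end{smallmatrix}\biggr) + \bp_{n_i^2}^T(Q^{\mathrm{full}}_{n_i^2}+\mu I)\bp_{n_i^2} \;\geq\; \gamma_H\!\!\sum_{k}\bigl(\|\bp_k\|^2+\|\bq_k\|^2\bigr),
\end{equation*}
by splitting the horizon into the last $t$ stages (where the $\mu\|\bp_{n_i^2}\|^2$ term plus controllability dominates any bounded indefinite contribution) and the remaining prefix (where the full-problem SOSC bound applies once the tail is made nonnegative).

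The main obstacle I expect is making the threshold $\bar\mu$ \emph{uniform}, i.e. dependent only on $(\Upsilon_{\text{upper}},\gamma_C,t)$ and \emph{not} on $i$, on $N$, or on the subdomain length $n_i^2-n_i^1$. The delicate point is that one cannot simply require $\mu$ to dominate the entire accumulated indefiniteness over a possibly long subhorizon; instead one must show that controllability lets the penalty ``take effect'' within a \emph{fixed} number $t$ of stages near the terminal boundary, after which the full-problem SOSC carries the rest of the horizon. Concretely, I would quantify how the quadratic form over the last window of length $t_k$ is lower bounded using $\Xi_{k,t_k}\Xi_{k,t_k}^T\succeq\gamma_C I$ to convert the terminal $\mu\|\bp_{n_i^2}\|^2$ penalty into a uniform lower bound on $\|\bq_{n_i^2-t_k}\|^2+\cdots$, thereby dominating the at-most-$t\,\Upsilon_{\text{upper}}$ indefinite mass from those stages, and then appeal to the full-problem reduced-Hessian bound on the prefix direction. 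Since both $t$ and $\Upsilon_{\text{upper}}$ are uniform, the resulting $\bar\mu$ is uniform. This decoupling—using controllability to localize the effect of $\mu$ to a bounded terminal window—is the crux of the argument and is what \cite[Lemma 1 and Theorem 1]{Na2020Superconvergence} supply, so I would adapt their estimate rather than re-derive it from scratch.
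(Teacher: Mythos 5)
The paper does not actually prove Lemma~\ref{lem:mu}: it states the result as an adaptation of \cite[Lemma 1 and Theorem 1]{Na2020Superconvergence} and simply quotes the explicit threshold \eqref{equ:mu} from that reference, so your decision to adapt the same source rather than re-derive the estimate lands you exactly where the authors are. Your sketch of the underlying mechanism is also broadly the right one (the penalty $\mu$ compensates for the missing tail of the value function, and controllability confines its job to a window of fixed length $t$, which is what makes $\bar\mu$ independent of $i$ and $N$), but two points in it need repair. First, the dual penalty $-\bblambda_{n_i^2}^T f_{n_i^2}(\bx_{n_i^2},\bbu_{n_i^2})$ is \emph{not} linear in $\bx_{n_i^2}$ when $f_{n_i^2}$ is nonlinear; its Hessian $-\nabla^2_{\bx\bx}(\bblambda_{n_i^2}^Tf_{n_i^2})$ is precisely the term that turns $\nabla^2 g_{n_i^2}$ into the full problem's Lagrangian block $Q_{n_i^2}$, so your conclusion $H^i_{n_i^2}=Q_{n_i^2}+\mu I$ is correct but for the wrong reason. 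Second, the ``prefix'' direction $(\bp_k,\bq_k)_{k\le n_i^2-t}$ to which you propose to apply the full-problem SOSC is not a feasible direction of the full problem (its terminal state dangles), so Assumption~\ref{ass:4}(a) cannot be invoked on it directly. The cleaner route---and the one underlying the cited reference---is to extend the \emph{entire} subproblem null direction to a full-horizon one: pad with zeros before $n_i^1$ (possible since $\bp_{n_i^1}=0$) and append a controllability-based steering tail of length at most $t$ driving $\bp_{n_i^2}$ back to zero with controls of size $O(\|\bp_{n_i^2}\|)$ via the pseudo-inverse of $\Xi_{n_i^2,t_{n_i^2}}$. Comparing the two quadratic forms then shows the subproblem form exceeds $\gamma_H$ times the squared norm minus a term of order $\Upsilon_{\text{upper}}^{O(t)}\gamma_C^{-2}\|\bp_{n_i^2}\|^2$, which $\mu\|\bp_{n_i^2}\|^2$ absorbs once $\mu\ge\bar\mu$ as in \eqref{equ:mu}.
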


{\red More precisely, we see from \cite[(15)]{Na2020Superconvergence} that
\begin{equation}\label{equ:mu}
\bar{\mu} = \bar{\mu}(\Upsilon_{\text{upper}}, \gamma_C, t) \coloneqq \frac{16\Upsilon_{\text{upper}}(\Upsilon_{\text{upper}}^{6t} - \Upsilon_{\text{upper}}^{4t})}{\gamma_C^2}.
\end{equation}
However, the above expression is only a conservative bound for $\mu$. In our experiments, we will see that $\mu = 1$ works well for different nonlinear OCPs.}

\begin{lem}\label{lem:2.5}
	
Let Assumption \ref{ass:4} hold for the local solution $\bw^\star$ of Problem \eqref{pro:1} and $\varepsilon>0$. We choose $\mu\geq \bar{\mu}$ defined in Lemma \ref{lem:mu}. Then, for any $i\in[T-1]$, $\bw^\star_{[i]}$ is a local solution of $\P_i(\tw_{[-i]})$.

\end{lem}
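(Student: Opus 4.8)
The plan is to show that $\bw^\star_{[i]}$ meets the standard second-order sufficient conditions for the subproblem $\P_i(\tw_{[-i]})$---primal feasibility, first-order (KKT) stationarity, LICQ, and a positive-definite reduced Hessian on the critical cone---so that the second-order sufficiency theorem (e.g. \cite[Theorem 12.6]{Nocedal2006Numerical}) delivers the conclusion directly. Throughout I write $\tw_{[-i]} = \bw^\star_{[-i]} = (\bx^\star_{n_i^1}; \bw^\star_{n_i^2})$ for the boundary data read off the full-horizon solution, and I compare the KKT system of $\P_i$ stage by stage against that of the full problem \eqref{pro:1} at $\bw^\star$.

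Feasibility is immediate: the full-horizon dynamics $\bx^\star_{k+1} = f_k(\bz^\star_k)$ hold for all $k$, in particular on $[n_i^1, n_i^2-1]$, and the initial condition of $\P_i$ reads $\bx_{n_i^1} = \bbx_{n_i^1} = \bx^\star_{n_i^1}$ by the choice of boundary data. For stationarity, I would write out the subproblem Lagrangian with the same sign convention as $\mL$ and observe that the stationarity equations for the interior states $\bx_k$ ($k\in[n_i^1+1, n_i^2-1]$) and for all controls $\bu_k$ ($k\in[n_i^1, n_i^2-1]$) are \emph{identical} to their full-horizon counterparts, hence hold at $\bw^\star_{[i]}$. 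The equation for the clamped state $\bx_{n_i^1}$ coincides with the full-horizon equation at that stage and merely pins down $\blambda_{n_i^1-1} = \blambda^\star_{n_i^1-1}$.

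The only nontrivial equation is the terminal one, and this is where the adjusted cost earns its keep. In $\P_i$ the terminal coupling enters solely through $\tilde{g}_{n_i^2}$, whose $\bx_{n_i^2}$-gradient is $\nabla_{\bx_{n_i^2}} g_{n_i^2}(\bx_{n_i^2}, \bbu_{n_i^2}) - \nabla_{\bx_{n_i^2}} f_{n_i^2}(\bx_{n_i^2}, \bbu_{n_i^2})\bblambda_{n_i^2} + \mu(\bx_{n_i^2} - \bbx_{n_i^2})$. Evaluated at $\bx_{n_i^2} = \bx^\star_{n_i^2}$ with $\bbu_{n_i^2} = \bu^\star_{n_i^2}$, $\bblambda_{n_i^2} = \blambda^\star_{n_i^2}$, and $\bbx_{n_i^2} = \bx^\star_{n_i^2}$, the quadratic penalty vanishes and the remaining terms reproduce exactly the adjoint coupling $\nabla_{\bx_{n_i^2}} g_{n_i^2}(\bz^\star_{n_i^2}) - \nabla_{\bx_{n_i^2}} f_{n_i^2}(\bz^\star_{n_i^2})\blambda^\star_{n_i^2}$ that appears in the full-horizon stationarity at the interior stage $\bx_{n_i^2}$. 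Thus the subproblem terminal condition collapses to the full-horizon equation and holds upon setting $\blambda_{n_i^2-1} = \blambda^\star_{n_i^2-1}$; the dual penalty $-\bblambda^T_{n_i^2} f_{n_i^2}$ is precisely what recovers the next-stage coupling that a naive truncation would discard. The boundary case $i = T-1$ is simpler, since $n_{T-1}^2 = N$ and the terminal cost is the genuine $g_N$, so the terminal stationarity is literally the full-horizon one.

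Finally, SOSC is handed to us by Lemma \ref{lem:mu}: with $\mu \ge \bar{\mu}$, taking $\bw_{[i]} = \bw^\star_{[i]}$ and $\bbw_{[-i]} = \tw_{[-i]}$ (both trivially inside $\N_\varepsilon(\bw^\star_{[i]})$ and $\N_\varepsilon(\bw^\star_{[-i]})$), the reduced Hessian obeys $ReH^i(\bw^\star_{[i]}; \tw_{[-i]}) \succeq \gamma_H I \succ 0$, while LICQ follows because the subproblem constraint Jacobian inherits the banded, full-row-rank structure of $G$ in Definition \ref{def:1}. Assembling feasibility, KKT stationarity, LICQ, and the positive-definite reduced Hessian then certifies $\bw^\star_{[i]}$ as a strict local solution of $\P_i(\tw_{[-i]})$. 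I expect the delicate point to be the terminal-stationarity bookkeeping: one must check that the quadratic penalty is inert at first order (so it does not break stationarity) yet is exactly the term Lemma \ref{lem:mu} exploits to guarantee SOSC, and that the dual penalty reproduces the discarded adjoint coupling to exact equality rather than merely approximately.
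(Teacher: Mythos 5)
Your proposal is correct and follows essentially the same route as the paper: invoke Lemma \ref{lem:mu} to transfer the reduced-Hessian bound to the subproblem, then verify that $\bw^\star_{[i]}$ satisfies the KKT system of $\P_i(\tw_{[-i]})$ stage by stage, with the only nontrivial check being that the gradient of the adjusted terminal cost $\tilde{g}_{n_i^2}$ collapses to the full-horizon adjoint equation when the boundary data equal the true solution. Your explicit verification of the terminal stationarity (the quadratic penalty vanishing and the dual penalty reproducing the $-A_{n_i^2}^T\blambda^\star_{n_i^2}$ coupling) is exactly the step the paper leaves to inspection of \eqref{eqn:kkt-sub}.
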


\begin{proof}

By Lemma \ref{lem:mu}, we know that the reduced Hessian of $\mP_i(\tw_{[-i]})$ evaluated at $\bw^\star_{[i]}$ is lower bounded by $\gamma_H$. Thus, it suffices to show that $\bw^\star_{[i]}$ satisfies the KKT conditions for $\P_i(\tw_{[-i]})$. First, we write the KKT systems for Problem \eqref{pro:1}:
\begin{equation}\label{eqn:kkt-full}
\0 = \left\{\begin{alignedat}{3}
& \nabla_{\bx_k}g_k(\bz_k) + \blambda_{k-1} - A_k^T(\bz_k)\blambda_k, && \forall k\in[N-1],\\
& \nabla_{\bu_k}g_k(\bz_k)  - B_k^T(\bz_k)\blambda_k,  \hskip1.5cm  && \forall k\in[N-1],\\
&\nabla_{\bx_{N}}g_{N}(\bx_{N}) + \blambda_{N-1}, \\
& \bx_{k+1} - f_k(\bz_k), && \forall k\in[N-1],\\
& \bx_{0} - \bbx_{0}.
\end{alignedat}\right.
\end{equation}
Analogously, the KKT system of $\P_i(\bar{\bw}_{[-i]})$ is
\begin{equation}\label{eqn:kkt-sub}
\0 = \left\{\begin{alignedat}{3}
& \nabla_{\bx_k}g_k(\bz_k) + \blambda_{k-1} - A_k^T(\bz_k)\blambda_k, && \forall k\in[n_i^1,n_i^2),\\
& \nabla_{\bu_k}g_k(\bz_k)  - B_k^T(\bz_k)\blambda_k,  \hskip1.5cm  && \forall k\in[n_i^1,n_i^2),\\
&\nabla_{\bx_{n_i^2}}\tilde{g}_{n_i^2}(\bx_{n_i^2}; \bbw_{n_i^2}) + \blambda_{n_i^2-1}, \\
& \bx_{k+1} - f_k(\bz_k), && \forall k\in[n_i^1,n_i^2),\\
& \bx_{n_i^1} - \bbx_{n_i^1},
\end{alignedat}\right.
\end{equation}
where $\nabla_{\bx_{n_i^2}}\tilde{g}_{n_i^2}(\bx_{n_i^2}; \bbw_{n_i^2})$ is
\begin{align*}
&=
\begin{cases}
\nabla_{\bx_{n_i^2}}{g}_{n_i^2}(\bx_{n_i^2}, \bbu_{n_i^2})  - A_{n_i^2}^T(\bx_{n_i^2}, \bbu_{n_i^2})\bblambda_{n_i^2} \\ \quad\quad \quad+ \mu(\bx_{n_i^2} - \bbx_{n_i^2}),\;\;i\in[T-2],\\
\nabla_{\bx_{N}}{g}_{N}(\bx_{N}),\hskip1.6cm  i=T-1.
\end{cases}
\end{align*}
One can see from the satisfaction of KKT system \eqref{eqn:kkt-full} for the full problem \eqref{pro:1} with $\tw$ that \eqref{eqn:kkt-sub} is satisfied with $\tw_{[i]}$ when $\bar{\bw}_{[-i]}=\tw_{[-i]}$. This completes the proof.
\end{proof}

We now estimate errors for the short-horizon solutions. The next theorem characterizes the existence and uniqueness of the local mapping $\bw^\dag_{[i]}(\cdot)$ from the boundary variable $\bw_{[-i]}$ to the local solution of $\P_i(\bw_{[-i]})$.

\begin{thm}\label{thm:sol-conti} 
	
Let Assumption \ref{ass:4} hold for the local solution $\bw^\star$ of Problem \eqref{pro:1} and $\varepsilon>0$. We choose $\mu\geq \bar{\mu}$ defined in Lemma~\ref{lem:mu}. Then, for any $i\in[T-1]$, there exist $\delta>0$, $\varepsilon'\in(0,\varepsilon)$ and a continuously differentiable function $\bw^\dag_{[i]}:\N_{\delta}(\tw_{[-i]})\rightarrow \N_{\varepsilon'}(\tw_{[i]})$ such that, if boundary variable $\bbw_{[-i]} \in \N_{\delta}(\tw_{[-i]})$, then $\bw^\dag_{[i]}(\bbw_{[-i]})$ is a unique local solution of $\mP_i(\bbw_{[-i]})$ in the neighborhood $\N_{\varepsilon'}(\tw_{[i]})$.

\end{thm}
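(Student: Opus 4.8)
The plan is to recognize Theorem \ref{thm:sol-conti} as a parametric sensitivity statement and prove it via the implicit function theorem (IFT) applied to the KKT system of the subproblem. First I would collect the KKT conditions \eqref{eqn:kkt-sub} into a single map $F(\bw_{[i]};\bbw_{[-i]})=\0$, where $\bw_{[i]}$ is the primal-dual variable solved for and $\bbw_{[-i]}$ is the parameter; note that the parameter enters smoothly, through the initial condition $\bx_{n_i^1}=\bbx_{n_i^1}$ and through the terminal adjustment $\tilde{g}_{n_i^2}(\cdot;\bbw_{n_i^2})$, which depends on $\bbx_{n_i^2},\bbu_{n_i^2},\bblambda_{n_i^2}$. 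Since $f_k,g_k$ are twice continuously differentiable, $F$ is $C^1$ jointly in $(\bw_{[i]},\bbw_{[-i]})$, and by Lemma \ref{lem:2.5} we have $F(\tw_{[i]};\tw_{[-i]})=\0$. This supplies the anchor point for the IFT.

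The crux is to show that the partial Jacobian $\nabla_{\bw_{[i]}}F(\tw_{[i]};\tw_{[-i]})$, i.e. the subproblem's KKT matrix, is nonsingular. I would write it as $M=\bigl(\begin{smallmatrix} W & J^T\\ J & \0\end{smallmatrix}\bigr)$, where $J$ is the subproblem constraint Jacobian and $W$ is the primal Lagrangian Hessian (the stagewise $H_k$ together with the $\mu I$ contribution of the terminal penalty). LICQ holds because $J$ inherits the block-bidiagonal structure of $G$ in Definition \ref{def:1} and hence always has full row rank. For nonsingularity, suppose $M(\bu;\bv)=\0$; then $J\bu=\0$ and $W\bu+J^T\bv=\0$, so $\bu^TW\bu=-\bu^TJ^T\bv=-(J\bu)^T\bv=0$. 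Since $\bu\in\ker J$ and, by Lemma \ref{lem:mu} with $\mu\ge\bar\mu$, the reduced Hessian $ReH^i(\tw_{[i]};\tw_{[-i]})=Z^TWZ\succeq\gamma_H I$ with the orthonormal columns of $Z$ spanning $\ker J$, we obtain $\bu^TW\bu\ge\gamma_H\|\bu\|^2$, forcing $\bu=\0$; then $J^T\bv=\0$ with $J$ of full row rank gives $\bv=\0$. Hence $M$ is invertible.

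With invertibility established, the IFT yields $\delta>0$, $\varepsilon'>0$, and a $C^1$ map $\bw^\dag_{[i]}:\N_\delta(\tw_{[-i]})\to\N_{\varepsilon'}(\tw_{[i]})$ with $F(\bw^\dag_{[i]}(\bbw_{[-i]});\bbw_{[-i]})=\0$ and $\bw^\dag_{[i]}(\tw_{[-i]})=\tw_{[i]}$, where $\bw^\dag_{[i]}(\bbw_{[-i]})$ is the unique zero of $F(\cdot;\bbw_{[-i]})$ in $\N_{\varepsilon'}(\tw_{[i]})$; shrinking the radii I would enforce $\varepsilon'<\varepsilon$ and $\delta\le\varepsilon$. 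The remaining step is to promote this KKT point to a genuine local minimizer: since $\N_{\varepsilon'}(\tw_{[i]})\subset\N_\varepsilon(\bw^\star_{[i]})$ and $\N_\delta(\tw_{[-i]})\subset\N_\varepsilon(\bw^\star_{[-i]})$, Lemma \ref{lem:mu} guarantees $ReH^i(\bw^\dag_{[i]}(\bbw_{[-i]});\bbw_{[-i]})\succeq\gamma_H I$, so SOSC holds at $\bw^\dag_{[i]}(\bbw_{[-i]})$ and it is a strict local solution of $\mP_i(\bbw_{[-i]})$; conversely, any local solution in $\N_{\varepsilon'}(\tw_{[i]})$ is a KKT point by LICQ and hence coincides with $\bw^\dag_{[i]}(\bbw_{[-i]})$, giving the claimed uniqueness.

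I expect the main obstacle to be the clean verification of KKT-matrix nonsingularity, in particular correctly recognizing that the $\mu$-penalty terminal adjustment is precisely what is folded into $W$, so that the reduced-Hessian bound of Lemma \ref{lem:mu} applies verbatim, together with the careful nesting of neighborhoods ($\varepsilon'<\varepsilon$, $\delta\le\varepsilon$) needed to invoke Lemma \ref{lem:mu} at the perturbed solution and thereby upgrade the IFT's KKT point to a local minimizer.
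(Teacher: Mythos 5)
Your proposal is correct. The paper itself gives no proof of Theorem \ref{thm:sol-conti}: it simply states that the result is a specialization of the classical perturbation theorem of Robinson (\cite[Theorem 2.1]{Robinson1974Perturbed}), which asserts exactly that under LICQ and SOSC at a KKT point, nearby parameter values admit a locally unique, continuously differentiable KKT trajectory consisting of strict local minimizers. What you have written is, in effect, a self-contained reconstruction of the proof of that cited result: you set up the KKT map $F$ from \eqref{eqn:kkt-sub}, anchor it at $(\tw_{[i]};\tw_{[-i]})$ via Lemma \ref{lem:2.5}, establish nonsingularity of the KKT matrix from full row rank of the constraint Jacobian together with the reduced-Hessian bound of Lemma \ref{lem:mu} (the standard $\bu^TW\bu=0$ with $\bu\in\ker J$ argument, cf. \cite[Lemma 16.1]{Nocedal2006Numerical}), invoke the implicit function theorem, and then upgrade the resulting KKT point to a strict local minimizer by applying Lemma \ref{lem:mu} again on the shrunken neighborhoods. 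All of these steps are sound, and your attention to the nesting $\varepsilon'<\varepsilon$, $\delta\le\varepsilon$ (so that Lemma \ref{lem:mu} applies at the perturbed point) and to the converse uniqueness direction (any local solution in $\N_{\varepsilon'}(\tw_{[i]})$ is a KKT point by LICQ, hence the IFT zero) is exactly what is needed. The only difference from the paper is presentational: the authors outsource the argument to a citation, whereas you prove it; your version buys self-containedness at the cost of a page, and nothing is lost or gained mathematically.
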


Theorem \ref{thm:sol-conti} is a specialization of the classical result of \cite[Theorem 2.1]{Robinson1974Perturbed}. Since $\bw_{[i]}^\dag$ is differentiable, we analogize the directional derivatives definitions in \eqref{equ:direc:deriva} and, for any point ${\bbw}_{[-i]}$ and perturbation direction $\bl$, define
\begin{equation*}
\bxi^\dag_{[i]}({\bbw}_{[-i]}, \bl) 
=\lim\limits_{h\searrow 0} \frac{\bw^\dag_{[i]}(\bar{\bw}_{[-i]}+h \bl + o(h)) - \bw^\dag_{[i]}(\bar{\bw}_{[-i]})}{h}
\end{equation*}
as the directional derivatives of $\bw^\dag_{[i]}(\bar{\bw}_{[-i]})$. Here, we disregard the perturbation for stages $[n^1_i,n^2_i)$ since in the formulation of subproblems \eqref{pro:4}, only stages $n^1_i-1$ and $n^2_i$ are perturbed. Implementing the exact computation of $\bw^\dag_{[i]}(\bbw_{[-i]})$ is challenging. In practice, one resorts to solving $\mP_i(\bbw_{[-i]})$ using a generic NLP solver, and the solver may return a local solution outside of the neighborhood $\N_{\varepsilon'}(\tw_{[i]})$. Strictly preventing~this is difficult in general. Fortunately, by warm-starting the solver with the previous iterate, one may reduce the chance that the solver returns a solution that is far from the previous iterate. Thus, in our numerical implementation, we implement Algorithm \ref{alg:decom:procedure} by using the warm-start strategy.

The next result characterizes the stagewise difference between $\bw_{[i]}^\dag(\bar{\bw}_{[-i]})$ and $\bw_{[i]}^\dag(\bw^\star_{[-i]})$. 

\begin{thm}\label{thm:eds}

Let Assumption \ref{ass:4} hold for the local solution $\bw^\star$ of Problem \eqref{pro:1} and $\varepsilon>0$. We choose $\mu\geq \bar{\mu}$ defined in Lemma~\ref{lem:mu}, and $\delta>0$ defined in Theorem \ref{thm:sol-conti}. For $i\in[T-1]$, if boundary variable $\bar{\bw}_{[-i]} \in \N_{\delta}(\tw_{[-i]})$, then there exist constants $\Upsilon>0$ and $\rho\in(0,1)$ independent from $N$ and $i$, such that 
\begin{align}\label{eqn:thm:eds}
&\|T_{k}\{\bw^\dag_{[i]}(\bar{\bw}_{[-i]}) - \tw_{[i]}\}\|_{\bw}
\leq  \Upsilon\big\{\rho^{k-n^1_i}\|\bar{\bx}_{n^1_i} - \tx_{n^1_i}\| \nonumber\\ 
& \quad +\rho^{n^2_i-k}\|\bar{\bw}_{n^2_i} - \tw_{n^2_i}\|_{\bw} \big\}, \;\quad\quad  \forall k \in [n^1_i-1,n^2_i].
\end{align}
\end{thm}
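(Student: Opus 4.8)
The plan is to recognize that $\bw^\dag_{[i]}(\bar{\bw}_{[-i]}) - \tw_{[i]}$ is nothing but the sensitivity of subproblem $\P_i$'s solution to its boundary data, and then to run the primal-dual EDS machinery of Section \ref{sec:2} on $\P_i$ itself. First I would apply Lemma \ref{lem:2.5} to write $\tw_{[i]} = \bw^\dag_{[i]}(\tw_{[-i]})$, so that the left-hand side of \eqref{eqn:thm:eds} becomes $T_k\{\bw^\dag_{[i]}(\bar{\bw}_{[-i]}) - \bw^\dag_{[i]}(\tw_{[-i]})\}$. By Theorem \ref{thm:sol-conti}, $\bw^\dag_{[i]}$ is continuously differentiable on the convex ball $\N_\delta(\tw_{[-i]})$; hence, parameterizing the segment $\bw_{[-i]}(s) = \tw_{[-i]} + s(\bar{\bw}_{[-i]} - \tw_{[-i]})$ for $s\in[0,1]$ (which remains in $\N_\delta(\tw_{[-i]})$), I would write the difference as $\int_0^1 \bxi^\dag_{[i]}(\bw_{[-i]}(s), \bar{\bw}_{[-i]} - \tw_{[-i]})\,ds$, an integral of directional derivatives of the subproblem solution map.

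Next I would confirm that along the whole segment the subproblem $\P_i(\bw_{[-i]}(s))$ genuinely satisfies the hypotheses of the sensitivity theorems with constants independent of $i$, $N$, and $s$: uniform SOSC comes from Lemma \ref{lem:mu}; uniform controllability from restricting Assumption \ref{ass:4}(c) to $[n^1_i, n^2_i]$; and uniform boundedness from Assumption \ref{ass:4}(b) together with the observation that the adjusted terminal cost $\tilde{g}_{n^2_i}$ has bounded Hessian (its state Hessian is $\nabla^2 g_{n^2_i} - \nabla^2(\bar{\blambda}_{n^2_i}^T f_{n^2_i}) + \mu I$, and its cross-Hessian with the boundary data $\bar{\bw}_{n^2_i}$ is likewise bounded) because $\mu$ is fixed and $\bar{\bw}_{n^2_i}\in\N_\delta(\tw_{n^2_i})$ stays bounded. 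Since all assumption constants $\gamma_H,\gamma_C,t,\Upsilon_{\text{upper}}$ are thereby identical across subproblems and independent of $N$, the EDS constants $\Upsilon,\rho$ furnished by Theorems \ref{thm:primal:sensitivity} and \ref{thm:dual:sensitivity} (equivalently the combined primal-dual bound at the end of Section \ref{sec:2}) are uniform.

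The directional derivative $\bxi^\dag_{[i]}(\bw_{[-i]}(s),\cdot)$ solves the linear-quadratic sensitivity OCP of Theorem \ref{thm:sen:LQP} and is therefore linear in its direction, so I would split $\bar{\bw}_{[-i]} - \tw_{[-i]} = (\bar{\bx}_{n^1_i}-\tx_{n^1_i}, \bo) + (\bo, \bar{\bw}_{n^2_i}-\tw_{n^2_i})$ into a left piece supported at the subproblem's initial stage (the $\be_{-1}$ direction) and a right piece supported at its terminal stage (the $\be_N$ direction). Applying part (b) ($\bl = \be_{-1}$) of the combined EDS bound to the left piece and part (a) ($\bl = \be_N$) to the right piece—after the relabeling that identifies subproblem internal index $k-n^1_i$ with full-problem stage $k$, so that $\rho^{k-n^1_i}$ and $\rho^{n^2_i-k}$ appear—gives, for each $s$,
\begin{align*}
\|T_k \bxi^\dag_{[i]}(\bw_{[-i]}(s),\bar{\bw}_{[-i]}-\tw_{[-i]})\|_{\bw}
&\leq \Upsilon\rho^{k-n^1_i}\|\bar{\bx}_{n^1_i}-\tx_{n^1_i}\| \\
&\quad + \Upsilon\rho^{n^2_i-k}\|\bar{\bw}_{n^2_i}-\tw_{n^2_i}\|_{\bw},
\end{align*}
where the passage between the $\ell_2$ stack norm used in Section \ref{sec:2} and the stagewise norm $\|\cdot\|_{\bw}$ only rescales $\Upsilon$ by a dimensional constant. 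Integrating this $s$-independent bound over $[0,1]$ and invoking the triangle inequality yields \eqref{eqn:thm:eds}.

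The hard part will be the bookkeeping that places the subproblem squarely inside the EDS framework with truly uniform constants: one must verify that the structured terminal data $\bar{\bw}_{n^2_i}$ plays exactly the role of the generic terminal parameter $\bd_N$ in \eqref{pro:2}, bound the modified terminal Hessian and its cross-derivative with the boundary dual $\bar{\blambda}_{n^2_i}$, and, most delicately, ensure that every solution $\bw^\dag_{[i]}(\bw_{[-i]}(s))$ along the segment stays in the neighborhood where Assumption \ref{ass:4} holds—so that $\gamma_H,\gamma_C,t,\Upsilon_{\text{upper}}$, and hence $\Upsilon,\rho$, remain genuinely independent of $N$ and $i$.
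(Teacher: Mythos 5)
Your proposal is correct and follows essentially the same route as the paper: reduce to $\bw^\dag_{[i]}(\bar{\bw}_{[-i]})-\bw^\dag_{[i]}(\tw_{[-i]})$ via Lemma \ref{lem:2.5}, verify Assumptions \ref{ass:1}--\ref{ass:3} along the perturbation path so the EDS constants are uniform, and integrate the directional-derivative bounds from Theorems \ref{thm:primal:sensitivity} and \ref{thm:dual:sensitivity}. The only cosmetic difference is that the paper splits the perturbation into two sequential paths (initial-stage then terminal-stage), each with a fixed unit direction, whereas you use a single straight-line path and split the direction inside the integrand by linearity of the sensitivity map; both are valid and yield the same bound.
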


\begin{proof}

We define $\Delta\bx_{n^1_i}=\tx_{n^1_i}-\bbx_{n^1_i}$, $\Delta\bw_{n^2_i} = \tw_{n^2_i} - \bbw_{n^2_i}$, and an intermediate boundary variable $\tilde{\bw}_{[-i]} = (\tx_{n^1_i}, \bbw_{n^2_i})$. Then $\tilde{\bw}_{[-i]} \in \N_{\delta}(\tw_{[-i]})$ and thus $\bw^\dag_{[i]}(\tilde{\bw}_{[-i]})$ exist. We have
\begin{align}\label{equ:1}
&\bw^\dag_{[i]}(\bar{\bw}_{[-i]}) - \bw^\dag_{[i]}(\tw_{[-i]})\\
& = \{\bw^\dag_{[i]}(\bar{\bw}_{[-i]}) - \bw^\dag_{[i]}(\tilde{\bw}_{[-i]})\} + \{\bw^\dag_{[i]}(\tilde{\bw}_{[-i]}) - \bw^\dag_{[i]}(\tw_{[-i]})\}.\nonumber
\end{align}
The first term corresponds to the perturbation of the initial stage, while the second term corresponds to the perturbation of the terminal stage. Let us define two directions $\bl_1$ and $\bl_2$~as:
\vskip-0.5cm
\begin{align*}
\bl_1= &
\begin{cases}
\0 & \text{ if }\|\Delta\bx_{n^1_i}\| = 0,\\
\frac{\Delta\bx_{n^1_i}}{\|\Delta\bx_{n^1_i}\|} & \text{otherwise},
\end{cases}\\
\bl_2= &
\begin{cases}
\0 & \text{ if }\|\Delta\bw_{n^2_i}\| = 0,\\
\frac{\Delta\bw_{n^2_i}}{\|\Delta\bw_{n^2_i}\|} & \text{otherwise},
\end{cases}
\end{align*}
and accordingly the perturbation paths $P_1= \{\bbw_{[-i]} + s\bl_1: s\in[0, \|\Delta\bx_{n^1_i}\|]\}$ and $P_2= \{\tilde{\bw}_{[-i]} + s\bl_2: s\in[0,\|\Delta\bw_{n^2_i}\|]\}$. Along $P_1$ the boundary variable changes from $\bbw_{[-i]}$ to $\tilde{\bw}_{[-i]}$; and along $P_2$ the boundary variable changes from $\tilde{\bw}_{[-i]}$ to $\tw_{[-i]}$. We can easily verify from Definition \ref{def:2} that any points on $P_1\cup P_2$ are in the neighborhood $\N_{\delta}(\bw^\star)$. Thus, by Theorem~\ref{thm:sol-conti}, $\bw^\dag_{[i]}(P_1\cup P_2)$ exists and lies in $\N_{\varepsilon'}(\tw_{[i]})$.

The perturbations along the path $P_1\cup P_2$ will be analyzed~using Theorems \ref{thm:primal:sensitivity} and \ref{thm:dual:sensitivity}. We first check that Assumptions \ref{ass:1}, \ref{ass:2}, \ref{ass:3} hold at $\bw^\dag_{[i]}(\bw_{[-i]})$ over $\bw_{[-i]}\in P_1\cup P_2$. Assumption \ref{ass:1} holds at each $\bw^\dag_{[i]}(\bw_{[-i]})$ by Assumption \ref{ass:4}(a) and Lemma \ref{lem:mu}. Assumption \ref{ass:2} holds at each $\bw^\dag_{[i]}(\bw_{[-i]})$ by Assumption \ref{ass:4}(c). For Assumption \ref{ass:3}, we know $H_k$, $A_k$, $B_k$ for $k\in[n_i^1, n_i^2)$ are upper bounded by Assumption \ref{ass:4}(b); further, one can verify that $H_{n^2_i}$, $C_{n^1_i}$, and $D_{n^2_i}$ are also uniformly bounded by inspecting $\mP_i(\cdot)$ and noting that $\mu$ is a parameter independent of $N$ and $i$. Thus, Assumptions \ref{ass:1}, \ref{ass:2}, \ref{ass:3} hold at each $\bw^\dag_{[i]}(\bw_{[-i]})$ over $\bw_{[-i]}\in P_1\cup P_2$.

By Lemma \ref{lem:2.5}, for any $i\in[T-1]$ and $k\in[n^1_i,n^2_{i}]$, we have
\begin{align}\label{eqn:breakdown}
\|T_{k}\{\bw_{[i]}^\dag&(\bar{\bw}_{[-i]})  -  \bw_{[i]}^\star\}\|_{\bw} \nonumber\\
& =\|T_{k}\{\bw_{[i]}^\dag(\bar{\bw}_{[-i]}) -\bw_{[i]}^\dag(\tw_{[-i]})\}\|_{\bw} \nonumber \\
&\stackrel{\eqref{equ:1}}{\leq} \|T_{k}\{\bw_{[i]}^\dag(\bar{\bw}_{[-i]}) - \bw_{[i]}^\dag(\tilde{\bw}_{[-i]}) \}\|_{\bw} \nonumber\\
&\quad + \| T_{k}\{ \bw_{[i]}^\dag(\tilde{\bw}_{[-i]}) - \bw_{[i]}^\dag(\tw_{[-i]}) \}\|_{\bw}.
\end{align}
Rewriting the first term of the right-hand side of the inequality in \eqref{eqn:breakdown} and using the integral of line derivative yields
\begin{align*}
\|T_{k}&\{\bw_{[i]}^\dag(\bar{\bw}_{[-i]}) - \bw_{[i]}^\dag(\tilde{\bw}_{[-i]}) \}\|_{\bw}\\
&= \left\|\int_{0}^{\|\Delta\bx_{n^1_i}\|} T_{k}\{\bxi_{[i]}^\dag(\bbw_{[-i]} + s\bl_1;\bl_1) \}ds \right\|_{\bw}\\
&\leq\int_{0}^{\|\Delta\bx_{n^1_i}\|}\| T_{k}\{\bxi_{[i]}^\dag(\bbw_{[-i]} + s\bl_1;\bl_1) \}\Vert_{\bw} ds\\
&\leq \|\Delta\bx_{n^1_i}\|\cdot \Upsilon\rho^{k - n_i^1} ,
\end{align*}
where the second inequality follows from triangle inequality of integrals and the third inequality follows from Theorems \ref{thm:primal:sensitivity} and \ref{thm:dual:sensitivity}. Similarly, the second term in \eqref{eqn:breakdown} can be bounded by $\Upsilon\rho^{n^2_i-k}\Vert \Delta\bw_{n^2_i}\Vert\leq \sqrt{3}\Upsilon\rho^{n^2_i-k}\Vert \Delta\bw_{n^2_i}\Vert_{\bw}$ {\red where the inequality is due to
\begin{multline*}
\|\Delta\bw_{n^2_i}\| \leq \sqrt{3}(\|\Delta\bx_{n_i^2}\|\vee\|\Delta\bu_{n_i^2}\|\vee \|\Delta\blambda_{n_i^2}\|)\\ = \sqrt{3}\Vert \Delta\bw_{n^2_i}\Vert_{\bw}.
\end{multline*}}
\hskip-3pt Thus, combining these results with \eqref{eqn:breakdown} and letting $\Upsilon\leftarrow \sqrt{3}\Upsilon$, we obtain \eqref{eqn:thm:eds}. This completes the proof.
\end{proof}

Theorem \ref{thm:eds} provides a proof for the conjecture made in \cite[Property 1]{Shin2019Parallel}. It shows that the effect of the perturbation of the boundary variable $\bw_{[-i]}$ of $\P_i$ on the solution $\bw_{[i]}^\dag(\bw_{[-i]})$ decays exponentially as moving away from two boundary ends. Here, the unperturbed data is $\tw_{[-i]}$, and by Lemma \ref{lem:2.5}, the truncated full-horizon solution is the unperturbed subproblem solution, i.e. $\bw_{[i]}^\dag(\tw_{[-i]}) = \tw_{[i]}$. However, since $\tw_{[-i]}$ is unknown, the algorithm uses the previous iterate to specify the boundary variable, which results in a perturbation of $\tw_{[-i]}$. Suppose we perturb $\tw_{[-i]}$ to $\bbw_{[-i]}$, which corresponds to the perturbation of both initial and terminal stages. Theorem \ref{thm:eds} makes use of the EDS property in Theorems \ref{thm:primal:sensitivity} and \ref{thm:dual:sensitivity} and shows that the stagewise error brought by the perturbation decays exponentially. In particular, for $k \in[m_{i-1}, m_{i}]$, we note that $k - n_i^1 \wedge n_i^2-k \geq \tau$, which implies the stagewise error within $[m_{i-1}, m_{i}]$ has been improved by at least a factor $2\Upsilon\rho^\tau$. Thus, if $\tau \geq \log(2\Upsilon)/\log(1/\rho)$, we can observe a clear improvement for middle stages $[m_i, m_{i+1}]$. This justifies discarding iterates for the subdomain overlaps and concatenating iterates of the non-overlapping subdomains.

We are now in a position to establish our main convergence result. Based on Lemma \ref{lem:2.5} and Theorem \ref{thm:eds}, we establish the local convergence of Algorithm \ref{alg:decom:procedure}.

\begin{thm}\label{thm:conv}

Let Assumption \ref{ass:4} hold for the local solution $\bw^\star$ of Problem \eqref{pro:1} and $\varepsilon>0$. There exist parameters $\bar{\mu}$, $\bar{\tau}>0$, and a constant $\delta>0$ such that if $\mu\geq \bar{\mu}$, $\tau\geq \bar{\tau}$, and $\bw^{(0)} \in \N_{\delta}(\tw)$, the following holds for $\ell=0,1,\cdots$:
\begin{align}\label{eqn:conv}
\Vert \bw^{(\ell)}-\bw^\star\Vert_{\bw}\leq \alpha^\ell \Vert \bw^{(0)}-\bw^\star\Vert_{\bw},
\end{align}
where $\alpha = 2\Upsilon \rho^\tau$ is independent of $N$.
\end{thm}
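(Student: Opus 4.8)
The plan is to prove \eqref{eqn:conv} by induction on $\ell$, establishing simultaneously that every iterate remains in a fixed neighborhood $\N_\delta(\tw)$ on which all subproblem solution maps are well defined, and that the error contracts by $\alpha = 2\Upsilon\rho^\tau$ per sweep. The conceptual backbone is that one pass of Algorithm \ref{alg:decom:procedure} is the composition of the restriction operators $T_{(i)}$ with the subproblem solution maps $\bw^\dag_{[i]}$, and that $\bw^\star$ is a fixed point of this composition: by Lemma \ref{lem:2.5} together with the uniqueness in Theorem \ref{thm:sol-conti}, $\bw^\dag_{[i]}(\tw_{[-i]}) = \tw_{[i]}$, so feeding in the exact boundary data reproduces the truncated full-horizon solution, i.e. $T_{(i)}(\bw^\dag_{[i]}(\tw_{[-i]})) = \bw^\star_{(i)}$. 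Hence the iteration error is driven entirely by the mismatch between the boundary data $\bw^{(\ell)}_{[-i]}$ handed over from the previous iterate and the exact data $\tw_{[-i]}$, which is precisely what Theorem \ref{thm:eds} quantifies.

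For the contraction step, I would fix $\ell$, assume $\bw^{(\ell)} \in \N_\delta(\tw)$, and note that then $\bw^{(\ell)}_{[-i]} \in \N_\delta(\tw_{[-i]})$ for every $i$, so Theorem \ref{thm:sol-conti} makes $\bw^\dag_{[i]}(\bw^{(\ell)}_{[-i]})$ well defined and Theorem \ref{thm:eds} applicable. For a non-overlapping stage $k \in [m_i, m_{i+1}]$ (including $k=-1$ when $i=0$ and $k=N$ when $i=T-1$), the update rule and $\bw^\star_{(i)} = T_{(i)}(\tw_{[i]})$ give
\[
\|T_k(\bw^{(\ell+1)} - \bw^\star)\|_{\bw} = \|T_k(\bw^\dag_{[i]}(\bw^{(\ell)}_{[-i]}) - \tw_{[i]})\|_{\bw},
\]
to which I apply \eqref{eqn:thm:eds}. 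Since $n^1_i = (m_i - \tau)\vee 0$ and $n^2_i = (m_{i+1} + \tau)\wedge N$, either a boundary is unclipped, whence $k - n^1_i \ge \tau$ and $n^2_i - k \ge \tau$ and both factors $\rho^{k-n^1_i}, \rho^{n^2_i - k}$ are at most $\rho^\tau$; or it is clipped onto a fixed problem datum (stage $0$ carrying the true initial condition $\bar{\bx}_0$, or stage $N$ where no terminal perturbation is present), in which case the matching difference $\|\bar{\bx}_{n^1_i} - \tx_{n^1_i}\|$ or $\|\bar{\bw}_{n^2_i} - \tw_{n^2_i}\|_{\bw}$ is zero. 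Bounding the surviving boundary differences by $\|\bw^{(\ell)} - \bw^\star\|_{\bw}$ and taking the maximum over $k$ and over the blocks $i$, which tile $[-1,N]$, yields $\|\bw^{(\ell+1)} - \bw^\star\|_{\bw} \le 2\Upsilon\rho^\tau\,\|\bw^{(\ell)} - \bw^\star\|_{\bw} = \alpha\,\|\bw^{(\ell)} - \bw^\star\|_{\bw}$.

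To close the induction I would take $\bar{\mu}$ from Lemma \ref{lem:mu} and $\delta$ as the radius from Theorem \ref{thm:sol-conti}, chosen uniformly in $i$, and then fix $\bar{\tau}$ large enough that $\tau \ge \bar{\tau}$ forces $\alpha = 2\Upsilon\rho^\tau < 1$, i.e. $\bar{\tau} > \log(2\Upsilon)/\log(1/\rho)$. Starting from $\bw^{(0)} \in \N_\delta(\tw)$, the one-step bound gives $\|\bw^{(1)} - \bw^\star\|_{\bw} \le \alpha\delta < \delta$, so $\bw^{(1)} \in \N_\delta(\tw)$ and the hypothesis propagates; iterating the contraction $\ell$ times delivers \eqref{eqn:conv}. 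The $N$-independence of $\alpha$ is inherited from the $N$- and $i$-independence of $\Upsilon$ and $\rho$ in Theorem \ref{thm:eds}.

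The routine algebra here is light, so the hard part will be the uniformity and boundary bookkeeping rather than the contraction estimate. First, one must ensure that the neighborhood radius $\delta$ of Theorem \ref{thm:sol-conti} and the EDS constants $\Upsilon, \rho$ can be taken uniformly across all $T$ subproblems and independently of $N$, so that $\min_i \delta_i$ does not degrade as $T$ grows; this relies on the uniform bounds in Assumption \ref{ass:4} and on $\mu$ being set uniformly. Second, the claim that every non-overlapping stage sits at distance at least $\tau$ from its subproblem's perturbed boundary must be reconciled with the clipping operations $(\cdot)\vee 0$, $(\cdot)\wedge N$ and with the special forms of subproblems $0$ and $T-1$; the clean resolution is to recognize a clipped boundary as an unperturbed problem datum contributing no error, so that the factor $\rho^\tau$ is genuinely attained at every stage. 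Verifying these two points, together with checking that the non-overlapping blocks exactly cover the stages $-1$ through $N$, is where I expect the real care to be needed.
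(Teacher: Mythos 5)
Your proposal is correct and follows essentially the same route as the paper's proof: induction on $\ell$ to keep iterates in $\N_\delta(\tw)$, the fixed-point identity $\bw^\dag_{[i]}(\tw_{[-i]})=\tw_{[i]}$ from Lemma \ref{lem:2.5}, and the per-sweep contraction obtained by applying Theorem \ref{thm:eds} with $k-n^1_i\wedge n^2_i-k\geq\tau$ on the non-overlapping stages. Your explicit treatment of the clipped boundaries ($n^1_i=0$ or $n^2_i=N$, where the corresponding boundary mismatch vanishes because the datum is exact) is a small but welcome refinement of a point the paper's proof passes over silently.
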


\begin{proof}

We choose $\bar{\mu}$ in Lemma \ref{lem:mu}, $\bar{\tau}=\lceil\log(2\Upsilon)/\log(1/\rho)\rceil+1$, and $\delta$ defined in Theorem \ref{thm:sol-conti}. Then $\alpha< 1$. We first show ${\bw}^{(\ell)}\in\N_{\delta}(\tw)$ for $\ell=0,1,\ldots$ using mathematical induction. The claim trivially holds for $\ell=0$ from the assumption. Assume that the claim holds for $\ell$; thus ${\bw}_{[-i]}^{(\ell)}\in\N_{\delta}(\tw_{[-i]})$. From Theorem \ref{thm:eds} and noting that for any $i\in[T-1]$ and any $\forall k\in[m_i, m_{i+1})$, $k - n_i^1\wedge n_i^2 - k \geq \tau$, we obtain
\begin{equation*}
\|\bw^{(\ell+1)}_k - \tw_k\|_{\bw}
\leq \alpha \|{\bw}^{(\ell)}_{[-i]} - \tw_{[-i]}\|_{\bw}\leq \alpha \|{\bw}^{(\ell)} - \tw\|_{\bw}.
\end{equation*}
Taking maximum over $k$ on the left hand side,
\begin{equation}\label{eqn:final}
\|\bw^{(\ell+1)} - \tw\|_{\bw}
\leq \alpha \|{\bw}^{(\ell)} - \tw\|_{\bw}.
\end{equation} 
From $\alpha<1$, we have $\bw^{(\ell+1)}\in\N_\delta(\tw)$ and the induction is complete. Recursively using \eqref{eqn:final}, we obtain \eqref{eqn:conv}.
\end{proof}

Theorem \ref{thm:conv} establishes local linear  convergence of Algorithm \ref{alg:decom:procedure}. In summary, if SOSC, controllability condition, and boundedness condition are satisfied around the neighborhood of the local primal-dual solution of interest, the algorithm converges to the solution at a linear rate  (provided that the overlap size $\tau$ is sufficiently large). Furthermore, the convergence rate $\alpha$ decays exponentially in $\tau$. One may observe that the overlap size may reach the maximum (i.e., $[n^1_i,n^2_i]=[0,N]$ for $i\in[T-1]$) before $\alpha<1$ is achieved. In that case, the algorithm converges in one iteration (it becomes a centralized algorithm). However, since $\Upsilon$ and $\rho$ are parameters independent of $N$, when a problem with a sufficiently long horizon is considered, one can always obtain the exponential improvement of the convergence rate before reaching the maximum overlap. {\red The argument states the existence of $\bar{\mu}$, $\bar{\tau}$ and $\delta$. The expression of $\bar{\mu}$ is provided in \eqref{equ:mu}; $\bar{\tau}$ is selected by enforcing $\alpha = 2\Upsilon\rho^{\bar\tau}<1$ so that $\bar\tau =\\ \lceil\log(2\Upsilon)/\log(1/\rho)\rceil +1$. Here, $\Upsilon, \rho$ are constants from Theorem \ref{thm:eds}, originating from the sensitivity analysis presented in the end of Section \ref{sec:2}. Their expressions in terms of the constants in Assumption \ref{ass:4} can be found in \cite{Na2020Exponential}, but we do not present them here observing that they are often conservative in practice. As typical for local convergence analysis for NLP, the local radius $\delta$ is not accessible due to the intrinsic nonlinearity of the problem (e.g., see Theorems 11.2 and 18.4 in the textbook \cite{Nocedal2006Numerical} for comparison), while it is also not required for performing our algorithm.
}


{\red As stated in Theorem \ref{thm:conv}, Algorithm \ref{alg:decom:procedure} has two tuning~parameters $\mu$ and $\tau$,} where the former is the penalty parameter for the subproblem (cf. \eqref{pro:4}) and the latter is the overlap size. We note that the quadratic penalty is only added to the terminal state variable. Thus a very large $\mu$ 
is equivalent to fixing the terminal state $\bx_{n^2_{i}}$ at $ \bbx_{n_i^2}$. We can let $\mu$ be large to ensure the condition $\mu\geq \bar{\mu}$ to be satisfied, though our experiments show that a moderate $\mu$ also works well in practice. On the other hand, a larger $\tau$ implies faster convergence, but also results in longer subproblems. In practice, we may tune $\tau$ to balance the fast convergence rate and the increased subproblem complexity. Moreover, one benefit from our analysis is that tuning $\mu$ and $\tau$ is independent from horizon length $N$. Thus, we only target the subproblems with fixed, short horizons to tune $\mu$ and $\tau$, and the same parameters work even when $N$ is extremely large.


The convergence of Algorithm \ref{alg:decom:procedure} can be monitored by checking the KKT residuals of \eqref{pro:1}. However, a more convenient surrogate of the full KKT residuals can be derived as follows. 

\begin{prop}\label{prop:criteria} 

Let $\{\bw^{(\ell)}\}_{\ell=0}^\infty$ be the sequence generated by Algorithm \ref{alg:decom:procedure} with $\tau\geq 1$. Any limit point of the sequence satisfies the KKT conditions \eqref{eqn:kkt-full} for the full problem \eqref{pro:1} if the following is satisfied for $i\in(0,T)$ as $\ell\rightarrow\infty$:
\begin{align*}    
\begin{cases}
T_{m_i}(\bx^\dag_{[i-1]}(\bw^{(\ell)}_{[-(i-1)]})) - \bx^{(\ell+1)}_{m_i}  \rightarrow 0,\\
T_{m_i-1}(\blambda^\dag_{[i]}(\bw^{(\ell)}_{[-i]})) - \blambda^{(\ell+1)}_{m_i-1}  \rightarrow 0.
\end{cases}
\end{align*}
\end{prop}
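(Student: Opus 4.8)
Throughout, abbreviate $\bw^\dag_{[i]}\equiv\bw^\dag_{[i]}(\bw^{(\ell)}_{[-i]})$ for the solution of the $\ell$-th subproblem $\P_i$, which satisfies the subproblem KKT system \eqref{eqn:kkt-sub} exactly. The plan is to evaluate the full KKT residual \eqref{eqn:kkt-full} at a concatenated iterate $\bw^{(\ell+1)}$ and compare it, equation by equation, against these subproblem systems. The key observation is that the restriction $T_{(i)}$ glues pieces from different subproblems: for a stage $k$ strictly interior to a single non-overlapping block $(m_i,m_{i+1})$, all variables entering a given KKT equation ($\bz_k,\blambda_{k-1},\blambda_k$) are supplied by the same subproblem $i$, and since the costs and dynamics of $\P_i$ agree with those of \eqref{pro:1} away from the discarded terminal stage $n^2_i$, the corresponding full residual vanishes identically. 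I would first clear these interior equations together with the two genuine endpoints: the initial condition holds because $\bx_0$ is pinned to $\bbx_0$ for every $\ell$ (subproblem $0$ fixes $\bx_{n^1_0}=\bx_0^{(\ell)}$ at $n^1_0=0$, so $\bx_0^{(\ell)}=\bbx_0$ by induction), and the terminal stationarity holds because $\bx_N$ and $\blambda_{N-1}$ are both produced by $\P_{T-1}$, whose terminal cost is the unmodified $g_N$.

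Second, I would isolate the only equations that can carry a nonzero residual, namely those straddling an interface $m_i$, $i\in(0,T)$: dynamic feasibility at $k=m_i-1$ and $\bx_{m_i}$-stationarity at $k=m_i$. Here the incoming quantities split across adjacent subproblems — $\bx_{m_i},\bz_{m_i},\blambda_{m_i}$ come from $\P_i$, while $\bz_{m_i-1},\blambda_{m_i-1}$ come from $\P_{i-1}$; the hypothesis $\tau\ge1$ ensures $m_i$ and $m_i-1$ lie strictly inside both horizons $[n^1_{i-1},n^2_{i-1}]$ and $[n^1_i,n^2_i]$, so the regular (non-terminal, non-initial) lines of \eqref{eqn:kkt-sub} apply and the discarded values $T_{m_i}(\bw^\dag_{[i-1]})$, $T_{m_i-1}(\bw^\dag_{[i]})$ are well defined. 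Subtracting subproblem $(i-1)$'s own dynamics equation at stage $m_i-1$ recasts the dynamic residual as $\bx^{(\ell+1)}_{m_i}-T_{m_i}(\bx^\dag_{[i-1]})$, and subtracting subproblem $i$'s own $\bx_{m_i}$-stationarity recasts the stationarity residual as $\blambda^{(\ell+1)}_{m_i-1}-T_{m_i-1}(\blambda^\dag_{[i]})$ — precisely the negatives of the two quantities assumed to tend to $\0$.

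Finally, I would assemble: every component of the full KKT residual at $\bw^{(\ell+1)}$ is either identically zero (interior stages and both true endpoints) or equals one of the two interface mismatches, so the hypothesis forces the whole residual map $F(\bw^{(\ell)})\to\0$ as $\ell\to\infty$. Because $f_k,g_k\in C^2$, the residual map $F$ is continuous, so along any subsequence $\bw^{(\ell_j)}\to\bw^\infty$ we obtain $F(\bw^\infty)=\lim_j F(\bw^{(\ell_j)})=\0$; hence every limit point solves \eqref{eqn:kkt-full}. The main obstacle is not analytic but a careful stagewise bookkeeping: tracking which subproblem supplies each variable at each interface, keeping the index ranges $[n^1_i,n^2_i]$ straight so that the correct line of \eqref{eqn:kkt-sub} is invoked at $m_i$ and $m_i-1$, and confirming that no other equation — in particular every $\bu_k$-stationarity and every interior $\bx_k$-stationarity — is contaminated by the gluing.
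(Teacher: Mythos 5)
Your proposal is correct and follows essentially the same route as the paper's proof: both identify that the concatenated iterate satisfies every full-problem KKT equation exactly except the $\bx_{m_i}$-stationarity and the dynamics equation at $m_i-1$ straddling each interface, and both recognize those residuals as exactly the two mismatch quantities in the hypothesis, concluding by continuity at limit points. Your version simply spells out the stagewise bookkeeping (interior stages, the two true endpoints, and the role of $\tau\geq 1$ in making the discarded values well defined) that the paper leaves implicit.
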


\begin{proof}

Recalling that each $\bw^\dag_{[i]}(\bw^{(\ell)}_{[-i]})$ satisfies the KKT conditions of $\mP(\bw^{(\ell)}_{[-i]})$,  one can observe that the KKT conditions \eqref{eqn:kkt-full} for the full problem \eqref{pro:1} are violated only in the first equation of \eqref{eqn:kkt-full} over $k\in\{m_i\}_{i=1}^{T-1}$ and in the fourth equation of \eqref{eqn:kkt-full} over $k\in\{m_i-1\}_{i=1}^{T-1}$; and the residuals at iteration $\ell+1$ are $T_{m_i-1}(\blambda^\dag_{[i]}(\bw^{(\ell)}_{[-i]})) - \blambda^{(\ell+1)}_{m_i-1} $ and $T_{m_{i}}(\bx^\dag_{[i-1]}(\bw^{(\ell)}_{[-(i-1)]})) - \bx^{(\ell+1)}_{m_{i}}$, respectively. Therefore, by the given condition, we have that \eqref{eqn:kkt-full} is satisfied for any limit points of the sequence.
\end{proof}

Accordingly, we define the monitoring metrics by
\begin{align*}
\epsilon_{\text{pr}}^{(\ell)}&=\max_{i\in(0,T)}\Vert T_{m_i}(\bx^\dag_{[i-1]}(\bw^{(\ell)}_{[-(i-1)]})) - \bx^{(\ell+1)}_{m_i}\Vert, \\
\epsilon_{\text{du}}^{(\ell)}&=\max_{i\in(0,T)}\Vert T_{m_i-1}(\blambda^\dag_{[i]}(\bw^{(\ell)}_{[-i]})) - \blambda^{(\ell+1)}_{m_{i}-1}\Vert,
\end{align*}
and then set the convergence criteria by
\begin{equation*}
\text{stop if: }\epsilon_{\text{pr}}^{(\ell)}\leq \epsilon_{\text{pr}}^{\text{tol}}\text{ and }\epsilon_{\text{du}}^{(\ell)}\leq \epsilon_{\text{du}}^{\text{tol}} ,
\end{equation*}
for the given tolerance values $\epsilon_{\text{pr}}^{\text{tol}}$, $\epsilon_{\text{du}}^{\text{tol}}$.

Before showing numerical results, we discuss the global behavior of Schwarz decomposition. In general, there is no guarantee for the scheme to converge globally for nonlinear OCPs. As shown in Theorem \ref{thm:sol-conti}, the solution mapping $\bw_{[i]}^\dag$ exists only in a neighborhood of $\tw_{[-i]}$ and our main rate of convergence result, Theorem \ref{thm:conv}, is strictly local in nature. Outside of the neighborhood, $\P_i(\bw_{[-i]})$ may have infinite solutions or may have no solution due to nonlinearity. In~general, we would need a merit function and a line search to ensure local convergence to a stationary point globally \cite{Nocedal2006Numerical}. However, when we have more structure in the problem, the scheme can converge globally. For example, we show in the next theorem that Algorithm \ref{alg:decom:procedure} converges globally for linear-quadratic OCPs. More general results using merit functions will be investigated in future research.

\begin{thm}\label{thm:LQR}
	
Let us consider linear-quadratic problems with
\begin{align*}
g_k(\bx_k, \bu_k) &=  \begin{pmatrix}
\bx_k\\
\bu_k
\end{pmatrix}^T\underbrace{\begin{pmatrix}
	Q_k & S_k^T\\
	S_k & R_k
	\end{pmatrix}}_{H_k}\begin{pmatrix}
\bx_k\\
\bu_k\\
\end{pmatrix}
  +
  \begin{pmatrix}
    \br_k\\
    \bs_k\\
  \end{pmatrix}^T
  \begin{pmatrix}
    \bx_k\\
    \bu_k\\
  \end{pmatrix}
  \\
g_N(\bx_N) &= \bx_N^T Q_N \bx_N + \br^T_N \bx_N
\end{align*}
and $f_k(\bx_k, \bu_k) = A_k\bx_k + B_k\bu_k + \bv_k$. Suppose Assumption \ref{ass:4} holds for $\{H_k, A_k, B_k\}$. Then there exist parameters $\bar{\mu}, \bar{\tau}>0$ such that if $\mu\geq \bar{\mu}$, $\tau\geq \bar{\tau}$, the linear convergence to the unique global solution $\tw$ of Problem \eqref{pro:1}, shown in \eqref{eqn:conv}, holds for any initial point $\bw^{(0)}$.

\end{thm}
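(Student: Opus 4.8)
The plan is to show that the entire local machinery behind Theorem \ref{thm:conv} becomes global once the data is linear-quadratic, because the only genuine source of locality in the nonlinear case — the restriction of the solution map $\bw_{[i]}^\dag$ to a neighborhood $\N_\delta(\tw_{[-i]})$ in Theorem \ref{thm:sol-conti} — disappears. Concretely, in the LQ setting the matrices $H_k, A_k, B_k$ (hence $Q_k, S_k, R_k$) are constant, so each subproblem $\P_i(\bw_{[-i]})$ is itself a quadratic program whose KKT system is linear in $\bw_{[i]}$ and affine in the boundary parameter $\bw_{[-i]}$. First I would fix $\bar\mu$ from Lemma \ref{lem:mu}; since Assumption \ref{ass:4} is imposed directly on the constant data $\{H_k, A_k, B_k\}$, the bounds $ReH \succeq \gamma_H I$, $\|H_k\|\vee\|A_k\|\vee\|B_k\|\leq \Upsilon_{\text{upper}}$, and $\Xi_{k,t_k}\Xi_{k,t_k}^T\succeq\gamma_C I$ hold at \emph{every} point rather than only in a neighborhood. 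Thus for every $\mu\geq\bar\mu$ each subproblem is strongly convex on its feasible affine subspace, and since LICQ holds by the structure of $G(\bz;\bd)$ in Definition \ref{def:1}, its KKT system has a unique solution — the unique global minimizer (e.g.\ \cite[Lemma 16.1]{Nocedal2006Numerical}) — giving a globally defined, \emph{affine} solution map $\bw_{[i]}^\dag(\cdot)$ that replaces the locally-defined map of Theorem \ref{thm:sol-conti}. The same reduced-Hessian argument applied to the full problem \eqref{pro:1} shows it too has a unique global minimizer $\tw$, so the convergence target is unambiguous.

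Next I would globalize the sensitivity estimate. Because the data matrices are constant, the sensitivity LQP \eqref{pro:3} and the convexification recursions of Theorem \ref{thm:2} and Algorithm \ref{alg:convex:proce} are independent of the base point at which they are instantiated; consequently the EDS constants $\Upsilon,\rho$ produced by Theorems \ref{thm:primal:sensitivity} and \ref{thm:dual:sensitivity} are uniform over all linearization points. This lets me rerun the proof of Theorem \ref{thm:eds} essentially verbatim, but now allowing the line segments $P_1, P_2$ to be arbitrary rather than contained in $\N_\delta(\tw_{[-i]})$: since $\bw_{[i]}^\dag$ is affine, its directional derivative $\bxi_{[i]}^\dag$ is constant along each segment and is bounded stagewise by $\Upsilon\rho^{k-n_i^1}$ and $\Upsilon\rho^{n_i^2-k}$ exactly as before, so integrating the line derivative along $P_1\cup P_2$ yields the bound \eqref{eqn:thm:eds} for every boundary value $\bar{\bw}_{[-i]}$, with no neighborhood restriction.

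Finally I would choose $\bar\tau = \lceil\log(2\Upsilon)/\log(1/\rho)\rceil+1$ so that $\alpha = 2\Upsilon\rho^\tau<1$ for $\tau\geq\bar\tau$, and repeat the contraction step: for $k\in[m_i,m_{i+1})$ one has $(k-n_i^1)\wedge(n_i^2-k)\geq\tau$, so the now-global Theorem \ref{thm:eds} gives $\|\bw^{(\ell+1)}-\tw\|_{\bw}\leq\alpha\|\bw^{(\ell)}-\tw\|_{\bw}$ for \emph{every} iterate, without the induction $\bw^{(\ell)}\in\N_\delta(\tw)$ that was needed in Theorem \ref{thm:conv}. Recursing yields \eqref{eqn:conv} from an arbitrary $\bw^{(0)}$. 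The main obstacle I anticipate is the careful verification that the sensitivity constants are genuinely base-point independent — in particular that Lemma \ref{lem:mu} and the convexification of Algorithm \ref{alg:convex:proce} behave uniformly when the Hessians $H_k$ are indefinite while the reduced Hessian is uniformly positive definite — together with the (standard but load-bearing) argument that a possibly nonconvex equality-constrained quadratic has a \emph{unique global} minimizer purely from positive definiteness of its reduced Hessian on the feasible subspace.
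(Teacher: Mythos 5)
Your proposal is correct and follows essentially the same route as the paper's proof: globalize the subproblem solution map via the everywhere-valid reduced-Hessian bound from Lemma \ref{lem:mu} together with LICQ and \cite[Lemma 16.1]{Nocedal2006Numerical} (so Theorem \ref{thm:sol-conti} holds with the neighborhoods being the entire space), then observe that the error recursion \eqref{eqn:thm:eds} and the contraction \eqref{eqn:final} carry over verbatim since the data matrices are constant. Your additional remarks on the affinity of $\bw_{[i]}^\dag$ and the base-point independence of the EDS constants are exactly the (implicit) justifications the paper relies on when it states that the proof of Theorem \ref{thm:eds} applies ``directly.''
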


We note that $\{H_k, A_k, B_k\}$ do not depend on $\bw_k$, and linear terms $\{\br_k,\bs_k,\bv_k\}$ do not affect the Lagrangian Hessian and constraint Jacobian. Thus, Assumption \ref{ass:4} holds in the whole space.

\begin{proof}

First, each subproblem \eqref{pro:4} is still a linear quadratic problem with any boundary variable $\bw_{[-i]}$. By Lemma \ref{lem:mu}, there exists $\bar{\mu}$ such that $\mu\geq \bar{\mu}$ implies $ReH^i \succeq \gamma_H I$. Moreover, by LICQ of \eqref{pro:4} (i.e. the Jacobian $G$ has full row rank, see Definition \ref{def:1}), we know from \cite[Lemma 16.1]{Nocedal2006Numerical} that each subproblem has a unique, global solution for any $\bw_{[-i]}$, denoted by $\bw_{[i]}^\dag(\bw_{[-i]})$. Thus, Theorem \ref{thm:sol-conti} is applicable with the stated neighborhoods being the entire space. 

Second, from Lemma \ref{lem:2.5}, we know $\bw_{[i]}^\dag(\tw_{[-i]}) = \tw_{[i]}$.~Moreover, the one-step error recursion in \eqref{eqn:thm:eds} holds directly~following the same proof as in Theorem \ref{thm:eds}, and finally \eqref{eqn:final} holds as well. This shows \eqref{eqn:conv} holds. 
\end{proof}

{\red Although Theorem \ref{thm:LQR} is limited to the linear-quadratic~settings, we allow for the possibility of negative curvature in the objective. Specifically, note that Assumption \ref{ass:4}(a) only requires $ReH\succeq \gamma_H I$, while existing results  \cite{Xu2018Exponentially} require the strong convexity $H\succeq \gamma_H I$.} This benefit comes from the convexification procedure in Algorithm \ref{alg:convex:proce} that our EDS results are based on.

It is worthwhile to mention that computation of the Newton step (search direction) for nonlinear OCPs effectively reduces to solving a linear-quadratic OCP. {\red Accordingly, the overlapping Schwarz scheme can be used as a method to compute the search directions within second-order methods (such as interior-point approaches). Theorem \ref{thm:LQR} provides the global convergence proof for overlapping Schwarz-based step computations.} We acknowledge that there is a wide range of decomposition methods for linear-quadratic OCPs (that include both iterative and direct approaches). Studying and comparing the scalability of these methods with that of the Schwarz scheme is an interesting direction of future work. 

\section{Numerical Experiments}\label{sec:6}

We apply the overlapping Schwarz scheme to a quadrotor control problem (governed by equations of motions) and to a thin plate temperature control problem (governed by nonlinear PDEs). Here, we aim to illustrate the convergence behavior of the Schwarz scheme and also to compare performance against state-of-the-art approaches such as ADMM and a centralized interior-point solver (Ipopt). We will demonstrate that Schwarz provides flexibility (as ADMM) and efficiency (as Ipopt).~Our results also illustrate how EDS property arises in applications.

\subsection{Quadrotor Control}
We consider a quadrotor model studied in \cite{Hehn2011flying, Deng2019parallel}:
\begin{align*}
\frac{d^2X}{dt^2} &= a(\cos\gamma\sin\beta\cos\alpha+\sin\gamma\sin\alpha)\\
\frac{d^2Y}{dt^2} &= a(\cos\gamma\sin\beta\sin\alpha-\sin\gamma\cos\alpha)\\
\frac{d^2Z}{dt^2} &= a\cos\gamma\cos\beta-\overline{g}\\
\frac{d\gamma}{dt}&= (\omega_X\cos\gamma + \omega_Y\sin\gamma)/\cos\beta\\
\frac{d\beta}{dt}&= -\omega_X\sin\gamma + \omega_Y\cos\gamma\\
\frac{d\alpha}{dt}&= \omega_X\cos\gamma\tan\beta + \omega_Y\sin\gamma\tan\beta\ + \omega_Z.
\end{align*}
Here, $(X,Y,Z)$ are the positions, $(\gamma,\beta,\alpha)$ are the angles,~and $\overline{g}=9.8$ is the gravitational acceleration. We regard $\bx=(X,\dot{X},Y,\dot{Y},Z,\dot{Z},\gamma,\beta,\alpha)$ as the state variables and $\bu=(a,\omega_X,\omega_Y,\omega_Z)$ as the control variables. The dynamics are discretized with an explicit Euler scheme with time step $\Delta t=0.005$  to obtain an OCP of the form of interest.  Furthermore, the stage cost function is $g_k(\bx_k,\bu_k)=\frac{1}{2}\Vert \bx_k-\bx^{\text{ref}}_k\Vert_Q^2+\frac{1}{2}\Vert \bu_k\Vert_R^2$; $g_N(\bx_N)= \frac{1}{2\Delta t}\Vert \bx_N-\bx^{\text{ref}}_N\Vert_Q^2$; $Q=\diag(1,0,1,0,1,0,1,1,1)$; $R=\diag(1/10,1/10,1/10,1/10)$; $\{\bx^{\text{ref}}_k\}_{k=1}^N$ is generated from a sinusoidal function; $N=24,000$ (full problem; corresponds to 60 secs horizon); $\mu=1$; and $\bar{\bx}_0=(0;0;0;0;0;0;0;0;0)$.

\subsection{Thin Plate Temperature Control}
We consider a thin plate temperature control problem \cite{mathworks2020nonlinear}:
\begin{subequations}\label{pro:5}
\begin{align}
\min_{x,u}\;\;& \int_0^T\int_{w\in \Omega}\frac{1}{2}(x(w,t)-d(w,t))^2 + \frac{1}{2}ru(w,t)^2\; dwdt\\
\st\;\; & \frac{\partial x(w,t)}{\partial t} =-\Delta x(w,t) + \frac{2h_c}{\kappa t_z} (x(w,t)-\overline{T}) \nonumber\\
&\quad\quad + \frac{2\epsilon\sigma}{\kappa t_z} (x(w,t)^4-\overline{T}^4)- \frac{1}{\kappa t_z}u(w,t),\nonumber\\
&\quad\quad w\in \Omega,t\in[0,T] \label{pro:5b}\\
&x(w,t) = \overline{T},\; \quad w\in \partial\Omega,
\end{align}
\end{subequations}
where $\Omega=[0,1]\times[0,1]\subseteq\mathbb{R}^2$ is the 2-dimensional domain of interest; $x:\Omega\times [0,T]\rightarrow \mathbb{R}$ is the temperature; $u:\Omega\times [0,T]\rightarrow \mathbb{R}$ is the control; $\Delta$ is the Laplacian operator; $\partial\Omega$ is the boundary of $\Omega$; $d:\Omega\times [0,T]\rightarrow\mathbb{R}$ is the desired temperature; $r=0.1$, $\kappa=400$, $t_z=0.01$, $h_c=1$, $\epsilon=0.5$, $\sigma=5.67\times 10^{-8}$, and $\overline{T}=300$ are the problem parameters (see \cite{mathworks2020nonlinear}). The desired temperature data are generated from a sinusoidal function. The PDE in \eqref{pro:5b} is governed by the heat equation which consists of convection, radiation, and forcing terms, and the Dirichlet boundary condition is enforced. We consider a discretized version of the problem: we discretize $\Omega$ by a $10\times 10$ mesh and $T=24$ hour prediction horizon with $\Delta t = 10$ secs.

\subsection{Methods and Results}

We first present a numerical verification of primal-dual EDS (Theorem \ref{thm:eds}) using the quadrotor problem. We first obtain the reference primal-dual solution trajectory by solving the full problem. Then, the perturbed trajectories are obtained by solving the problem with the perturbation on the given initial state $\bbx_{0}$, and on the terminal state $\bx^{\text{ref}}_N$. In particular, we solved the full problem with random perturbations $\Delta\bar{\bx}_0$ and $ \Delta \bx^\text{ref}_N$ drawn from a zero-mean normal distribution. 

The reference trajectory and 30 samples of the perturbed trajectories are shown in Fig. \ref{fig:eds}. One can see that the solution trajectories coalesce in the middle of the time domain and increase the spread at two boundaries. This result indicates that the sensitivity is decreasing toward the middle of the interval and verifies our theoretical results. 

\begin{figure}
  \centering
  \includegraphics[width=.41\textwidth]{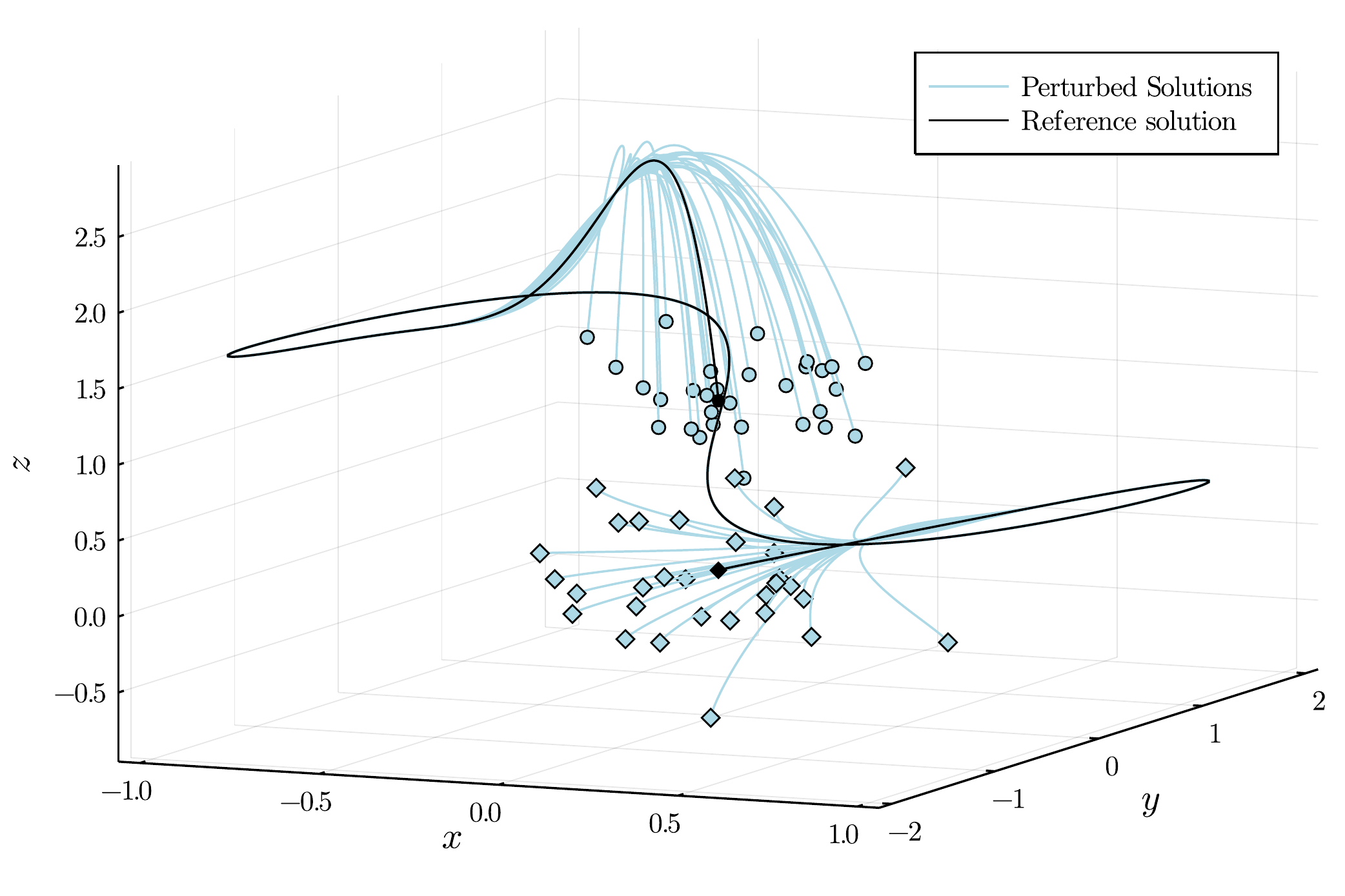}\\
  \includegraphics[width=.41\textwidth]{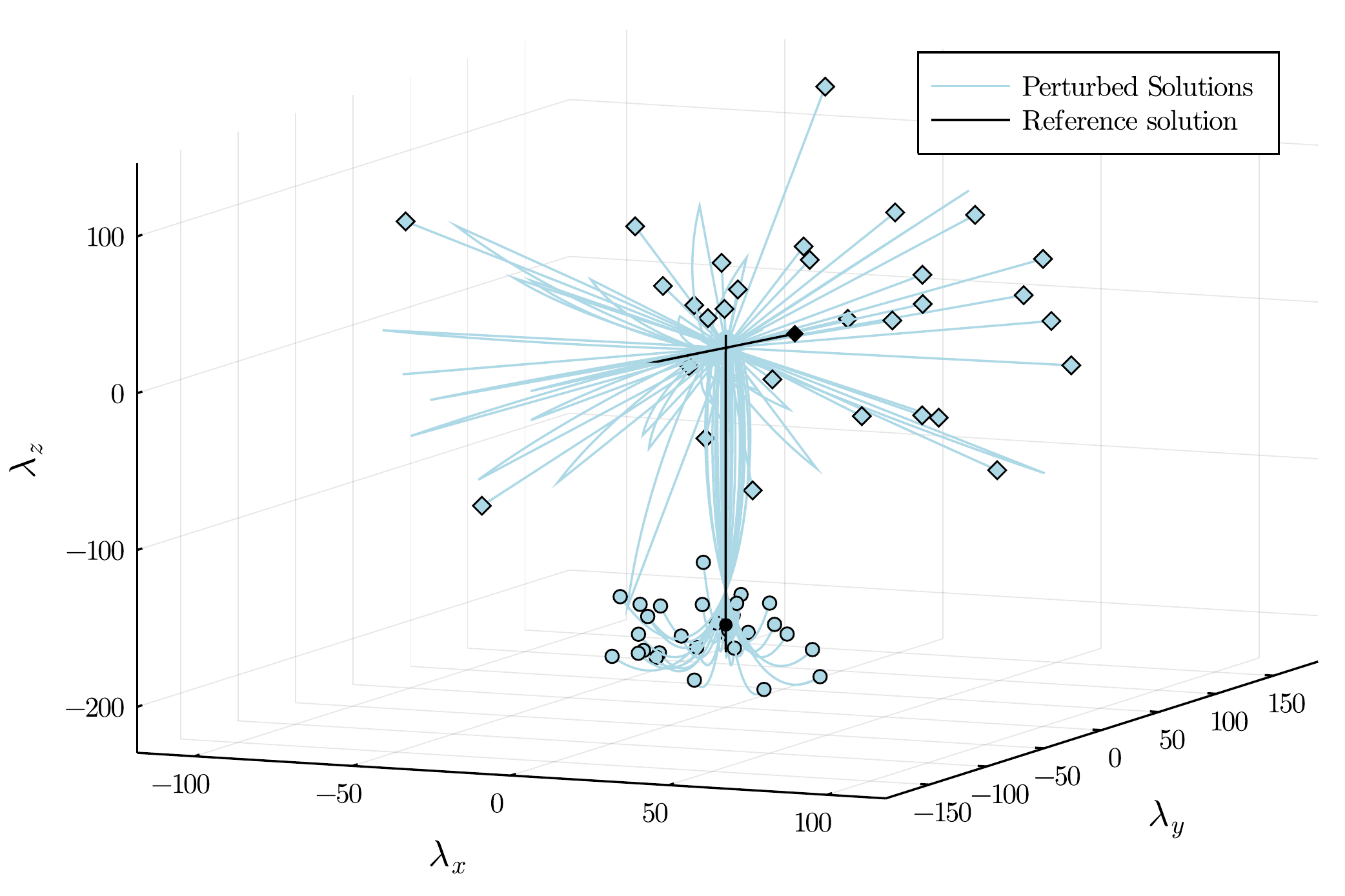}
  \caption{Primal-dual exponential decay of sensitivity for quadrotor problem. The black line represents the reference trajectory; the light blue lines represent the perturbed trajectories; the diamonds represent the initial state; and the circles represent the terminal state. Observe the collapse of the perturbed trajectories to the reference one in the middle of the time intervals.}\label{fig:eds}
\end{figure}

\begin{table}[hb]\centering
  \caption{Iterations and solution time as a function of overlap size.}\label{tbl:conv}
  \begin{tabular}{|c|c|c|c|}
    \hline
    &$\widetilde{\tau}=0.3$&$\widetilde{\tau}=0.5$&$\widetilde{\tau}=1.0$\\
    \hline
    Iterations &31&11&7\\
    \hline
    Solution Time (sec)&6.03 & 2.41 & 2.46\\
    \hline
  \end{tabular}
\end{table}

We now illustrate the convergence of the Schwarz scheme for the quadrotor problem with 3 subdomains. The evolution of KKT errors with different overlap sizes are plotted in Fig. \ref{fig:conv}. Here, we expand the domain until the size of the extended domain reaches $\widetilde{\tau}$ times the original non-overlapping domain. Such relative criteria are often more practical because the scaling of the problem changes with discretization mesh size. We observe that the convergence rate improves dramatically as $\widetilde{\tau}$ increases. This result verifies Theorem \ref{thm:conv}. Fig. \ref{fig:conv-2} further illustrates convergence of the trajectories for $\widetilde{\tau}=0.1$. At the first iteration, the error is large at the boundaries and small in the middle of the domain. The error decays rapidly as the high-error components of the solution are discarded and the low-error components are kept. This behavior illustrates why EDS is central to achieve convergence. A computational trade-off exists for the Schwarz scheme when increasing $\widetilde{\tau}$ (since the subproblem complexity increases with $\widetilde{\tau}$). This trade-off is revealed from time per iteration (see Table \ref{tbl:conv}): we find the scheme takes 0.219 sec/iter when $\widetilde{\tau}=0.5$ and 0.352 sec/iter when $\widetilde{\tau}=1.0$.

\begin{figure}
\centering
  \includegraphics[width=.41\textwidth]{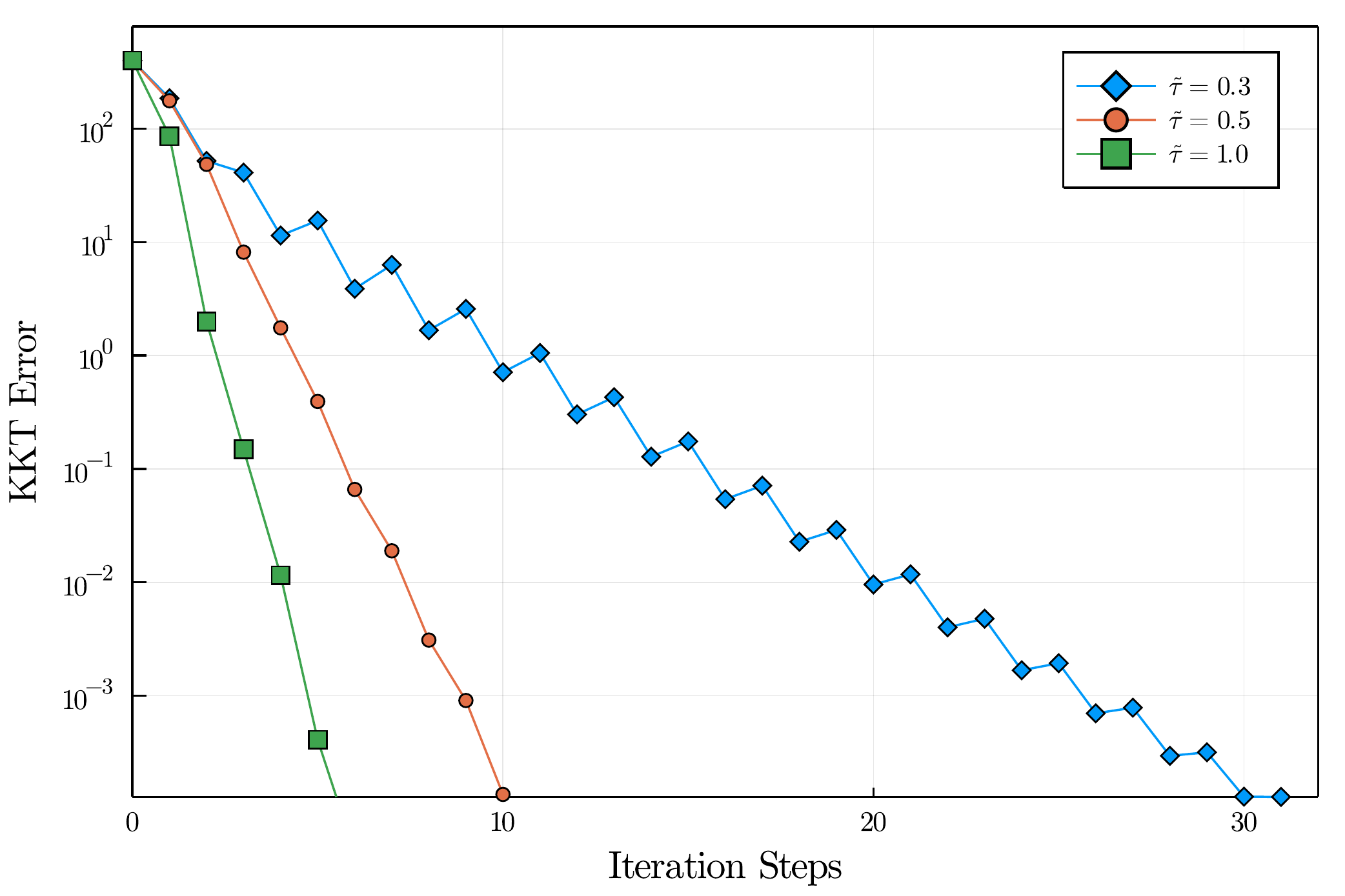}
  \caption{Convergence of KKT residuals for overlapping Schwarz scheme.}\label{fig:conv}
\end{figure}
\begin{figure}
  \centering
    \includegraphics[width=.41\textwidth]{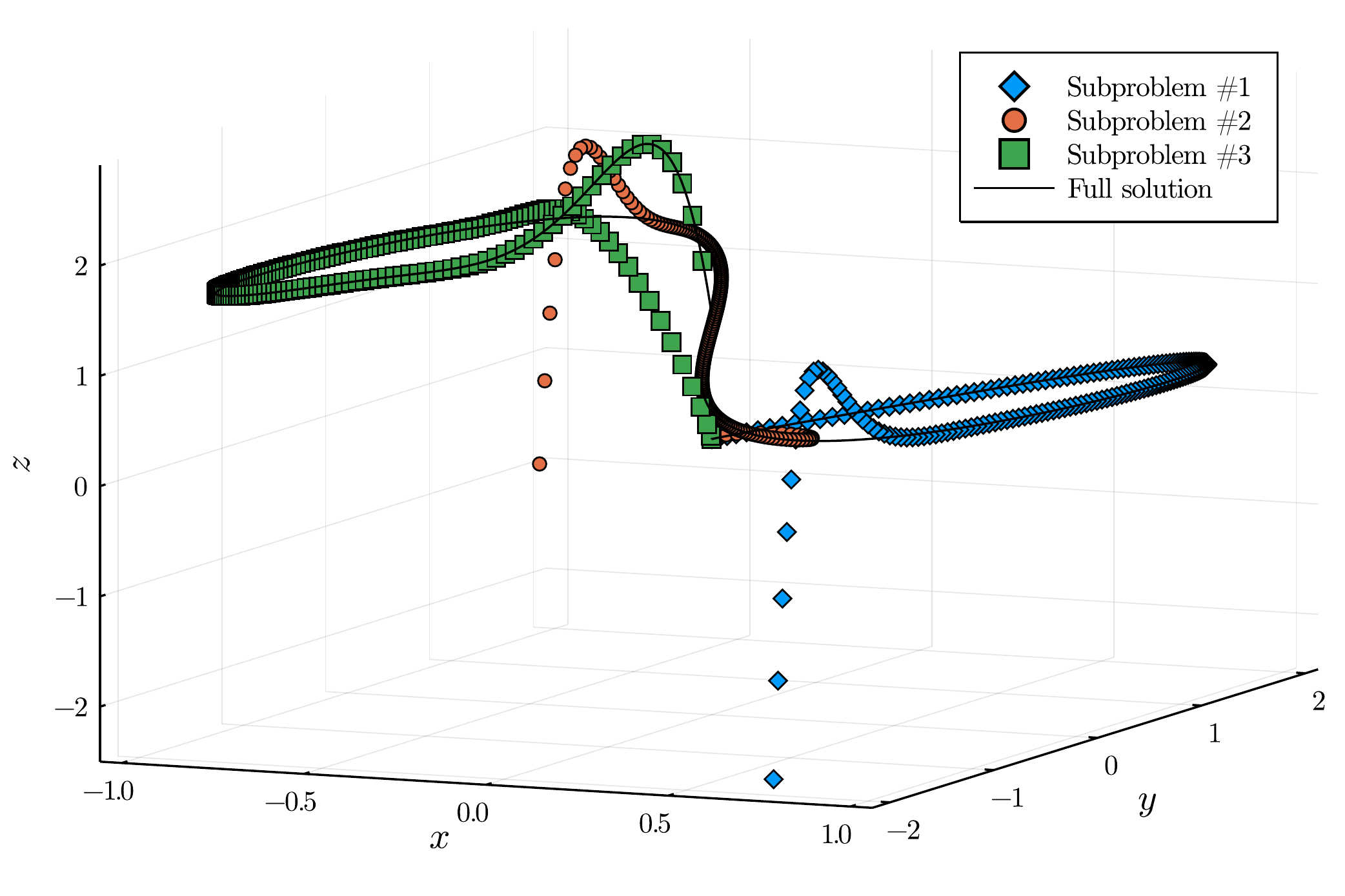}\\
    \includegraphics[width=.41\textwidth]{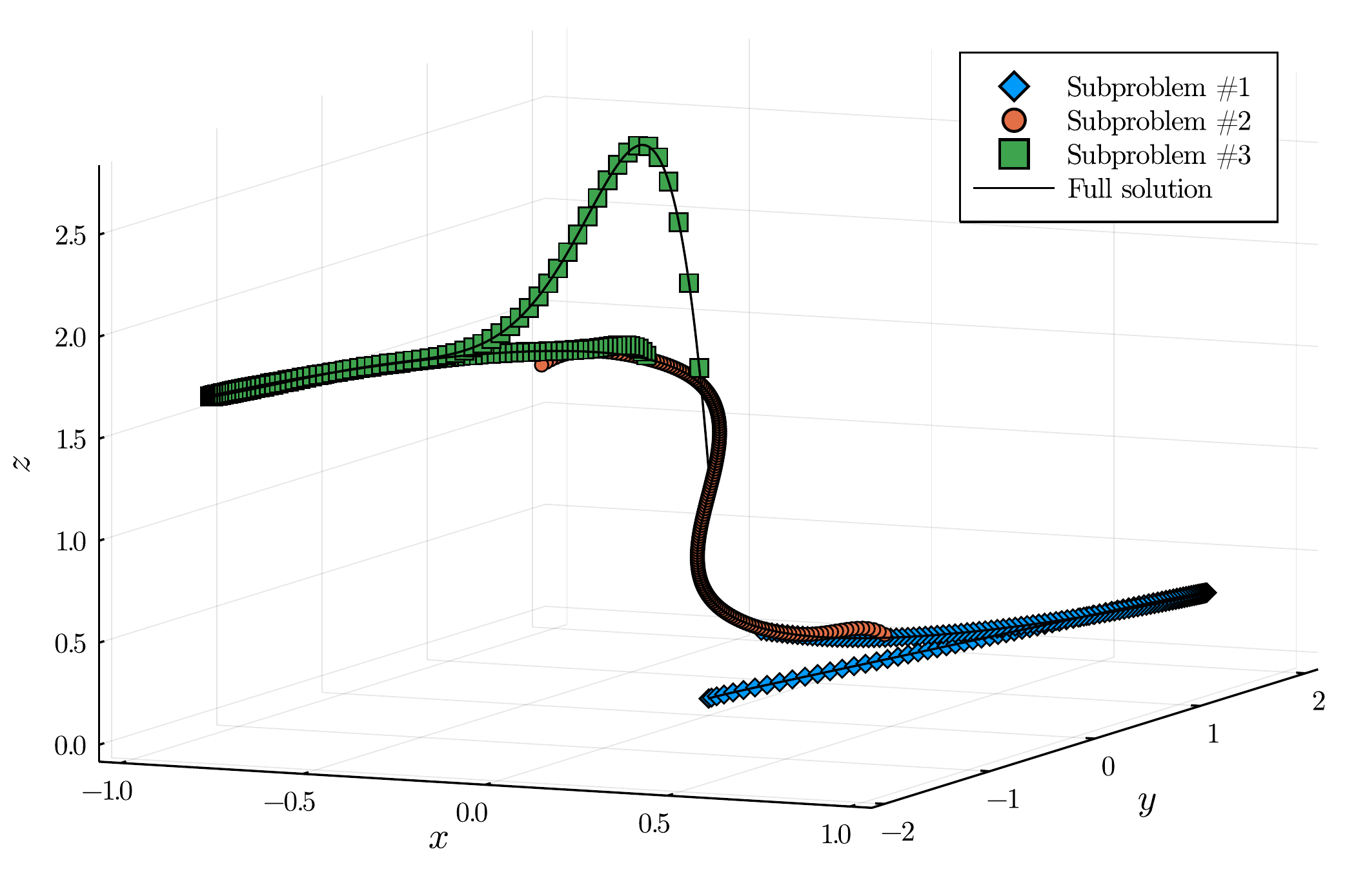}\\
    \includegraphics[width=.41\textwidth]{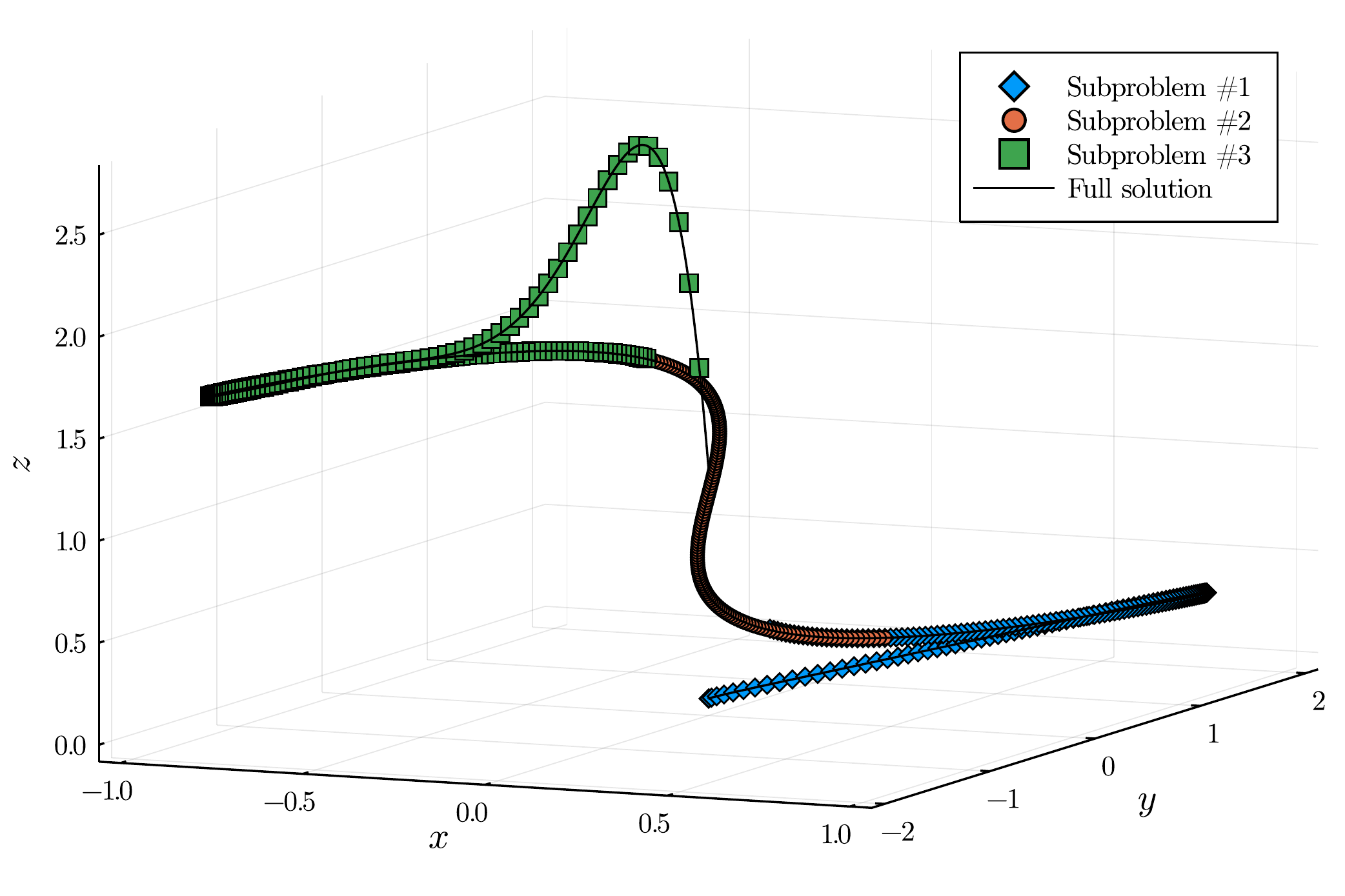}
  \caption{Convergence of primal trajectory with $\tilde{\tau} = 0.1$. Top-to-bottom: iterations 1,2, and 3; blue, red, green markers are solutions from subproblems 1, 2, and 3, respectively; black line is solution trajectory.}\label{fig:conv-2}
\end{figure}

We also benchmark the Schwarz scheme against a centralized solver (Ipopt) and against a popular decomposition scheme (ADMM) for solving the above two problems. {\red For both the Schwarz and ADMM schemes}, we partition the domain into 20 intervals with the same length. {\red For the Schwarz scheme, we expand each interval by \eqref{equ:n1in2i} with the relative size of overlap $\tilde{\tau} = 1.0$.} For ADMM, subproblems are formulated by introducing duplicate variables and decomposing on the time domain \cite{Rodriguez2018Benchmarking}. To ensure consistency, subproblems in the Schwarz and ADMM schemes are all solved with Ipopt \cite{Waechter2005implementation}, configured with the sparse solver {\tt MA27} \cite{A2007collection}. The study is run on a multicore parallel computing server (shared memory and 40 cores of Intel Xeon Gold 6140 CPU running at 2.30GHz) using the multi-thread parallelism in {\tt Julia}. {\red For both the Schwarz and ADMM schemes, we vary the penalty parameter as indicated in Figure \ref{fig:benchmark}.} All results can be reproduced using the provided scripts in \url{https://github.com/zavalab/JuliaBox/tree/master/SchwarzOCP}.

One can see that, for both problems, the overlapping Schwarz scheme has much faster convergence than ADMM (Fig. \ref{fig:benchmark}) {\red regardless of the choice of $\mu$. One can also observe that the performance of ADMM is sensitive to the choice of $\mu$, while the performance Schwarz scheme is insensitive to it.} We note that ADMM tends to decrease the overall error but eventually the error settles to a rather high value. We also emphasize that ADMM does not have convergence guarantees for the general nonconvex OCPs considered here {\red (as discussed in Section \ref{sec:1})}. In contrast, the overlapping Schwarz scheme converges almost as fast as centralized solver Ipopt. The final accuracy of the Schwarz scheme (10${}^{-6}$) is much higher than that achieved by ADMM, but not as that achieved by Ipopt (less than 10${}^{-8}$). The difference between the accuracy between Schwarz and Ipopt are due to the fact that Schwarz is an iterative scheme, while the linear algebra performed inside Ipopt uses a direct linear solver (MA27). Direct solvers are known for delivering high accuracy. We highlight, however, that in {\red many} control applications there is often flexibility to deliver moderate accuracy, {\red so being able to deliver moderately accurate solution in a reasonably short time can be a favorable characteristic}. For the thin plate temperature control problem, the accuracy of ADMM is notably worse than that achieved with the Schwarz scheme. 

To sum up, the overlapping Schwarz scheme is an efficient method to solve OCPs and offers flexibility to be implemented in different computing hardware architectures. 

\begin{figure}
\centering
\includegraphics[width=.41\textwidth]{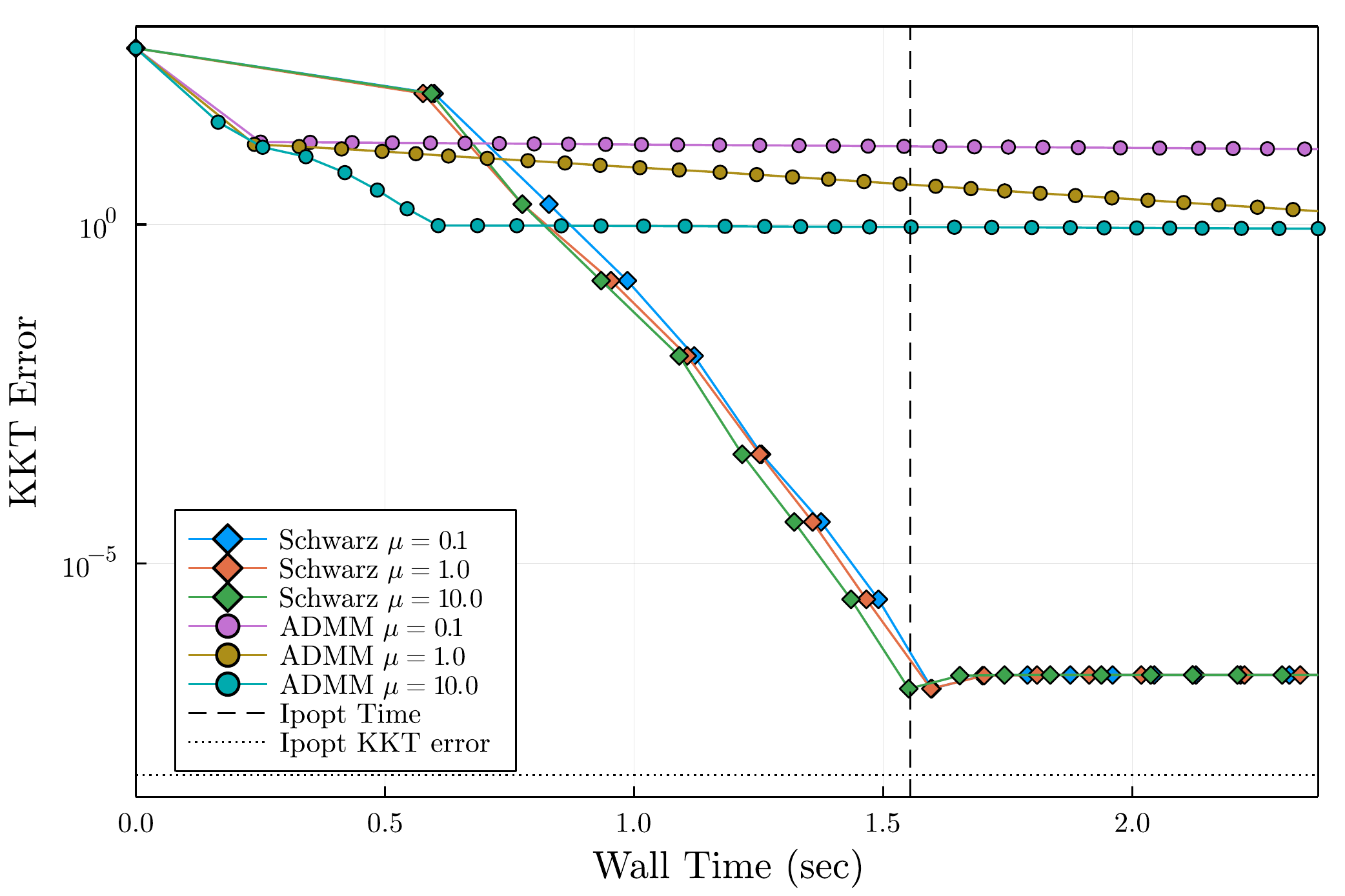}\\
\includegraphics[width=.41\textwidth]{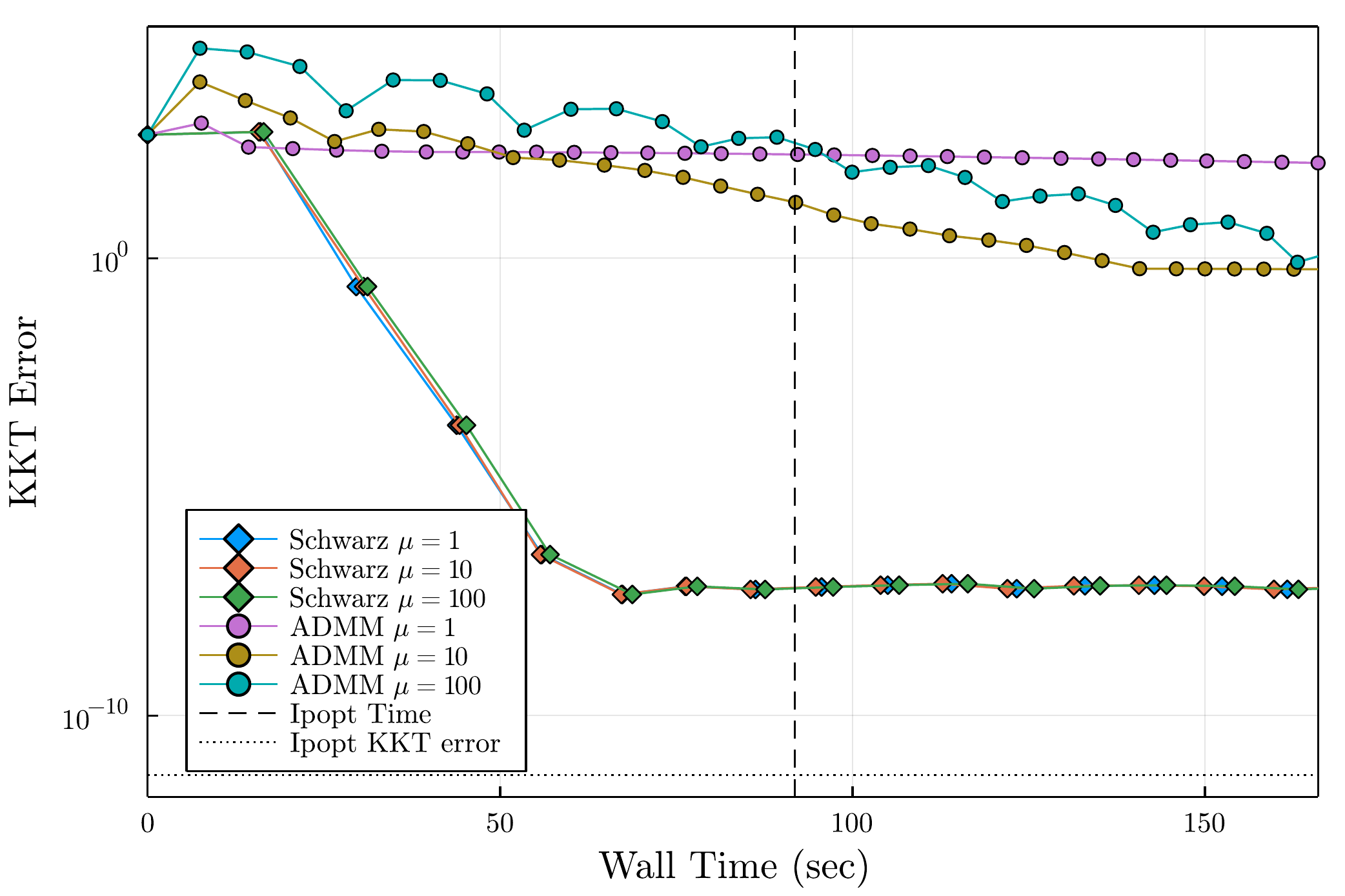}
\caption{Benchmark of overlapping Schwarz against Ipopt and ADMM. Top: Quadrotor control problem; bottom: Thin plate temperature control problem.}\label{fig:benchmark}
\end{figure}

\section{Conclusions}\label{sec:7}

We established the convergence properties of an overlapping Schwarz decomposition scheme for general nonlinear optimal control problems. Under standard SOSC, controllability, and boundedness conditions for the full problem, we showed~that the scheme enjoys linear convergence locally, with a linear~rate that improves exponentially with the size of the overlap. Central to our convergence proof is a primal-dual parametric sensitivity result that we call exponential decay of sensitivity. We also provided a global convergence proof for the Schwarz scheme for the linear-quadratic OCP case. This result is of relevance, as it suggests that the scheme could be used to solve linear systems inside NLP solvers. Computational results reveal that the Schwarz scheme is significantly more efficient than ADMM and as efficient as the centralized NLP solver Ipopt. In future work, we will seek to expand our results to alternative problem structures (e.g., networks and stochastic programs). Moreover, it will be interesting to compare performance in different hardware architectures (e.g., embedded systems) and against different decomposition schemes (e.g., Riccati and block cyclic reduction).

\section*{Acknowledgment}
This material is based upon work supported by the U.S. Department of Energy, Office of Science, Office of Advanced Scientific Computing Research (ASCR) under Contract DE-AC02-06CH11347 and by NSF through award CNS-1545046 and ECCS-1609183. We also acknowledge partial support from the National Science Foundation under award NSF-EECS-1609183.

\section*{Appendix}

The missing proofs in Section \ref{sec:3} are presented here.

\subsection{Proof of Theorem \ref{thm:1}}\label{pf:thm:1}

Under Assumption \ref{ass:1}, we know that $(\tp, \tq, \tzeta)$ is a unique global solution of $\mL\mQ\mP$. When executing Algorithm~\ref{alg:convex:proce} with $\beta\in(0, \gamma_H)$, {\red we know from \cite[Theorem 3.8]{Na2020Exponential} that $\tilde{H}_k\succ 0$ (i.e. $\tilde{H}_k(\beta)$ in their notation).} Thus, $(\tpc, \tqc, \tzetac)$ is also a~unique global solution of $\mC\mL\mQ\mP$. By \cite[Lemma 3.4]{Na2020Exponential}, {\red we know that $\mL\mQ\mP$ and $\mC\mL\mQ\mP$ have the same objective. Since Algorithm \ref{alg:convex:proce} does not change constraint matrices $A_k, B_k, C_k$ of \eqref{pro:3}, we have} $\tp = \tpc$ and $\tq = \tqc$. {\red We now establish the relation of dual solutions $\tzeta$ and $\tzetac$ by studying KKT conditions of Problem \eqref{pro:3}}. To simplify notation, we denote the $k$-th component of the objective by
\begin{align*}
O_k(\bp_k, \bq_k) &= \biggl(\begin{smallmatrix}
\bp_k\\
\bq_k\\
\bl_k
\end{smallmatrix}\biggr)^T\biggl(\begin{smallmatrix}
Q_k &S_k^T & D_{k1}^T\\
S_k & R_k & D_{k2}^T\\
D_{k1} & D_{k2} & \0
\end{smallmatrix}\biggr)\biggl(\begin{smallmatrix}
\bp_k\\
\bq_k\\
\bl_k
\end{smallmatrix}\biggr) , \forall k\in[N-1],\\
O_N(\bp_N) &= \biggl(\begin{smallmatrix}
\bp_N\\
\bl_N
\end{smallmatrix}\biggr)^T\biggl(\begin{smallmatrix}
Q_N & D_N^T\\
D_N & \0
\end{smallmatrix}\biggr)\biggl(\begin{smallmatrix}
\bp_N\\
\bl_N
\end{smallmatrix}\biggr).
\end{align*}
Similarly, we define $\tilde{O}_k(\bp_k, \bq_k)$ and $\tilde{O}_N(\bp_N)$ by replacing $H_k, D_k$ by $\tilde{H}_k, \tilde{D}_k$. The KKT system of $\mL\mQ\mP$ is then given~by
\begin{equation}\label{eq:KKTLQP}
\hskip-9pt \0 = \left\{\begin{alignedat}{3}
& \nabla_{\bp_k}O_k(\bp_k, \bq_k) + \bzeta_{k-1} - A_k^T\bzeta_k, && \forall k\in[N-1],\\
& \nabla_{\bq_k}O_k(\bp_k, \bq_k)  - B_k^T\bzeta_k,  \hskip1.5cm  && \forall k\in[N-1],\\
&\nabla_{\bp_N}O_N(\bp_N) + \bzeta_{N-1}, \\
& \bp_{k+1} - (A_k\bp_k + B_k\bq_k  + C_k\bl_k), && \forall k\in[N-1],\\
& \bp_0 - \bl_{-1}.
\end{alignedat}\right.
\end{equation}
For the KKT system of $\mC\mL\mQ\mP$, we replace $\nabla O_k$ by $\nabla \tilde{O}_k$~and $\bzeta$ by $\bzeta^c$ in \eqref{eq:KKTLQP} {\red since two problems have the same linear-quadratic form.} By Algorithm \ref{alg:convex:proce}, we know that $\forall k\in[N-1]$,
\begin{align}\label{pequ:1}
\nabla_{\bp_k}\tilde{O}_k&(\bp_k, \bq_k) =  2\tilde{Q}_k\bp_k + 2\tilde{S}_k^T\bq_k + 2\tilde{D}_{k1}^T\bl_k \nonumber\\
= & 2(\hat{Q}_k - \bar{Q}_k)\bp_k + 2\tilde{S}_k^T\bq_k + 2\tilde{D}_{k1}^T\bl_k \nonumber\\
= & 2Q_k\bp_k + 2S_k^T\bq_k + 2D_{k1}^T\bl_k - 2\bar{Q}_k\bp_k \nonumber\\
& + 2A_k^T\bar{Q}_{k+1}(A_k\bp_k + B_k\bq_k + C_k\bl_k) \nonumber\\
= & \nabla_{\bp_k}O_k(\bp_k, \bq_k)- 2\bar{Q}_k\bp_k + 2A_k^T\bar{Q}_{k+1}\bp_{k+1},
\end{align}
where the last equality results from definition of $O_k$ and the $k$-th dynamic constraint. We can also show that
\begin{equation}\label{pequ:2}
\begin{aligned}
\nabla_{\bq_k}\tilde{O}_k(\bp_k, \bq_k) =& \nabla_{\bq_k}O_k(\bp_k, \bq_k) + 2B_k^T\bar{Q}_{k+1}\bp_{k+1},\\
\nabla_{\bp_N}\tilde{O}_N(\bp_N) = & \nabla_{\bp_N}O_N(\bp_N)  - 2\bar{Q}_N\bp_N. 
\end{aligned}
\end{equation}
Plugging \eqref{pequ:1}, \eqref{pequ:2} back into \eqref{eq:KKTLQP}, we obtain that the KKT system of $\mL\mQ\mP$ is {\em equivalent} to
\begin{equation*}
\0 = \left\{\begin{alignedat}{3}
&\nabla_{\bp_k}\tilde{O}_k + (\bzeta_{k-1} + 2\bar{Q}_k\bp_k) - A_k^T(\bzeta_k + 2\bar{Q}_{k+1}\bp_{k+1}),\\
& \nabla_{\bq_k}\tilde{O}_k - B_k^T(\bzeta_k + 2\bar{Q}_{k+1}\bp_{k+1}),  \hskip0.8cm  \forall k\in[N-1],\\
&\nabla_{\bp_N}\tilde{O}_N + (\bzeta_{N-1} + 2\bar{Q}_N\bp_N), \\
& \bp_{k+1} - (A_k\bp_k + B_k\bq_k  + C_k\bl_k), \hskip1cm \forall k\in[N-1],\\
& \bp_0 - \bl_{-1}.
\end{alignedat}\right.
\end{equation*}
Comparing the above equation with the KKT system of $\mC\mL\mQ\mP$, and using the invariance of the primal solution, we~see that $(\tpc, \tqc, \tzeta + 2\bar{Q}\tp)$ satisfies the KKT system of $\mC\mL\mQ\mP$. Since LICQ holds for $\mC\mL\mQ\mP$, the dual solution is unique. This implies $\tzetac = \tzeta + 2\bar{Q}\tp$ and we complete the proof.

\subsection{Proof of Theorem \ref{thm:2}}\label{pf:thm:2}

First of all, the invertibility of $W_k$ is guaranteed by~Assumption \ref{ass:1}, as directly shown in \cite[Lemma 3.5(i)]{Na2020Exponential}. We use reverse induction to prove the formula of $\tzeta_k$. According to \eqref{eq:KKTLQP}, for $k = N-1$ we have
\begin{equation*}
\tzeta_{N-1} = -\nabla_{\bp_N}O_N(\tp_N) = -2Q_N\tp_N - 2D_N^T\bl_N,
\end{equation*}
which satisfies \eqref{equ:dual:form} and proves the first induction step. Suppose $\tzeta_k$ satisfies \eqref{equ:dual:form}. From \eqref{eq:KKTLQP}, we have 	
\begin{align*}
\tzeta_{k-1} = &  A_k^T\tzeta_k - \nabla_{\bp_k}O_k(\tp_k, \tq_k)\\
= & A_k^T\tzeta_k - 2Q_k\tp_k - 2S_k^T\tq_k - 2D_{k1}^T\bl_k.
\end{align*}	
Plugging the expression for $\tzeta_k$ from \eqref{equ:dual:form}, we get
\begin{align*}
&\tzeta_{k-1} = -2A_k^TK_{k+1}\tp_{k+1} + 2A_k^T\big(\sum_{i=k+1}^N(M_i^{k+1})^T\bl_i \\
& \quad + \sum_{i=k+1}^{N-1}(V_i^{k+1})^TC_i\bl_i\big) - 2Q_k\tp_k - 2S_k^T\tq_k - 2D_{k1}^T\bl_k\\
&=  -2(A_k^TK_{k+1}A_k + Q_k)\tp_k - 2(S_k + B_k^TK_{k+1}A_k)^T\tq_k\\
& \quad + 2A_k^T\cbr{\sum_{i=k+1}^N(M_i^{k+1})^T\bl_i  + \sum_{i=k+1}^{N-1}(V_i^{k+1})^TC_i\bl_i}\\
& \quad -2A_k^TK_{k+1}C_k\bl_k - 2D_{k1}^T\bl_k\\
& = -2(A_k^TK_{k+1}A_k + Q_k)\tp_k + 2P_k^TW_k\tq_k \\
& \quad + 2A_k^T\cbr{\sum_{i=k+1}^N(M_i^{k+1})^T\bl_i  + \sum_{i=k+1}^{N-1}(V_i^{k+1})^TC_i\bl_i}\\
& \quad -2A_k^TK_{k+1}C_k\bl_k - 2D_{k1}^T\bl_k,
\end{align*}	 
where the second equality follows from $\tp_{k+1}-(A_k\tp_k+B_k\tq_k+C_k\bl_k)=\0$, and the third equality follows from the definition of $P_k$. By \cite[Lemma 3.5(ii)]{Na2020Exponential}, we have	
\begin{align*}
\tq_k = &P_k\tp_k + W_k^{-1}B_k^T\sum_{i=k+1}^{N}(M_i^{k+1})^T\bl_i - W_k^{-1}D_{k2}^T\bl_k\\
& + W_k^{-1}B_k^T\sum_{i=k+1}^{N-1}(V_i^{k+1})^TC_i\bl_i - W_k^{-1}B_k^TK_{k+1}C_k\bl_k.
\end{align*}
Combining the above two displays, we obtain	
\begin{align*}
&\tzeta_{k-1} 
= -2(A_k^TK_{k+1}A_k + Q_k)\tp_k + 2P_k^TW_k\bigg(P_k\tp_k \\
& \quad + W_k^{-1}B_k^T\cbr{\sum_{i=k+1}^N(M_i^{k+1})^T\bl_i + \sum_{i=k+1}^N(V_i^{k+1})^TC_i\bl_i}\\
& \quad - W_k^{-1}B_k^TK_{k+1}C_k\bl_k - W_k^{-1}D_{k2}^T\bl_k\bigg) - 2D_{k1}^T\bl_k\\
& \quad + 2A_k^T\bigg(\sum_{i=k+1}^N(M_i^{k+1})^T\bl_i  + \sum_{i=k+1}^{N-1}(V_i^{k+1})^TC_i\bl_i\bigg)\\
& \quad - 2A_k^TK_{k+1}C_k\bl_k\\
& =  -2(A_k^TK_{k+1}A_k + Q_k - P_k^TW_kP_k)\tp_k \\
& \quad + 2E_k^T\bigg(\sum_{i=k+1}^N(M_i^{k+1})^T\bl_i  + \sum_{i=k+1}^{N-1}(V_i^{k+1})^TC_i\bl_i\bigg)\\
& \quad -2E_k^TK_{k+1}C_k\bl_k - 2(D_{k1} + D_{k2}P_k)^T\bl_k\\
& = -2K_k\tp_k + 2\sum_{i=k}^N(M_i^k)^T\bl_i + 2\sum_{i=k}^N(V_i^k)^TC_i\bl_i,
\end{align*}
where the second equality follows from the definition of $E_k$ and the third equality follows from definitions of $K_k$, $M^k_i$, and $V^k_i$. This verifies the induction step and finishes the proof.

\subsection{Proof of Lemma \ref{lem:dual}}\label{pf:lem:dual}
	
We use the closed form of $\tzetac$ established in Theorem \ref{thm:2}. We mention that all matrices are calculated based on $\{\tilde{H}_k, \tilde{D}_k\}$. {\red For any $k\in[-1, N-1]$, we consider two cases.}

\noindent{\bf (a)} $\bl = \be_i$ for $i\in[N]$. {\red We then have three subcases.}

\noindent{\bf (a1)} $i\leq k$. In this case, we apply \eqref{equ:dual:form} and immediately have for some constant $\Upsilon_1>0$ that
\begin{equation}\label{equ:b}
\|\tzetac_k\| = \|-2K_{k+1}\tp_{k+1}\|\leq 2\|K_{k+1}\|\|\tp_{k+1}\|\leq {\red 2\Upsilon_1\Upsilon}\rho^{k+1-i}.
\end{equation}
Here, the last inequality is due to Theorem \ref{thm:primal:sensitivity} that $\|\bp_{k+1}^\star\| \leq \Upsilon\rho^{k+1-i}$, and the fact that $\|K_{k+1}\|\leq \Upsilon_1$ for some constant $\Upsilon_1>0$, stated precisely in \cite[(4.7)]{Na2020Exponential}.

\noindent{\bf (a2)} $k+1\leq i\leq N-1$. We apply \eqref{equ:dual:form} and have for~some~constants $\Upsilon_2, \Upsilon_3 >0$ that
\begin{align*}
\|\tzetac_k\| = &\|-2K_{k+1}\tp_{k+1} + 2(M_i^{k+1})^T\be_i + 2(V_i^{k+1})^TC_i\be_i\|\\
\leq & {\red 2\Upsilon_1\Upsilon}\rho^{i-k-1} + {\red 2\Upsilon_2}\rho^{i-k-1} + {\red 2\Upsilon_3\Upsilon_{\text{upper}}}\rho^{i-k}\\
\leq & 2(\Upsilon_1\Upsilon + \Upsilon_2 + \Upsilon_3\Upsilon_{\text{upper}})\rho^{i-k-1}.
\end{align*}
The second inequality is due to \eqref{equ:b}, the fact that $\|C_i\|\leq \Upsilon_{\text{upper}}$ in Assumption \ref{ass:3}, and the fact that
\begin{equation*}
\|M_i^{k+1}\|\leq \Upsilon_2\rho^{i-k-1},\quad\quad \|V_i^{k+1}\|\leq \Upsilon_3\rho^{i-k}
\end{equation*}	
for some constants $\Upsilon_2, \Upsilon_3>0$, stated precisely in \cite[(5.11)]{Na2020Exponential}.  
	
\noindent{\bf (a3)} $i = N$. {\red Analogous to the derivations in {\bf (a2)}, we apply~\eqref{equ:dual:form} and have 
\begin{align*}
& \|\tzetac_k\| = \|-2K_{k+1}\tp_{k+1} + 2(M_N^{k+1})^T\be_N\|\\
& \leq  2\Upsilon_1\Upsilon\rho^{N-k-1} + 2\Upsilon_2\rho^{N-k-1}=  2(\Upsilon_1\Upsilon + \Upsilon_2 )\rho^{N-k-1}.
\end{align*}}
\hskip-3pt\noindent{\bf (b)} $\bl = \be_{-1}$. Applying \eqref{equ:dual:form} and Theorem \ref{thm:primal:sensitivity}, we obtain that
\begin{equation*}
\|\tzetac_k\| = \|-2K_{k+1}\tp_{k+1}\|\leq 2\Upsilon_1\Upsilon\rho^{k+1}.
\end{equation*}
Combining the above two cases, we let {\red $\Upsilon' = 2(\Upsilon_1\Upsilon + \Upsilon_2 + \Upsilon_3\Upsilon_{\text{upper}})$} and complete the proof.

\subsection{Proof of Theorem \ref{thm:dual:sensitivity}}\label{pf:thm:dual:sensitivity}
By Lemma \ref{lem:dual} and Theorem \ref{thm:1} we have for all $k\in[-1, N-1]$ that
\begin{equation*}
\|\tzeta_k\| = \|\tzetac_k - 2\bar{Q}_{k+1}\tp_{k+1}\|\leq  \|\tzetac_k\| + 2\|\bar{Q}_{k+1}\|\|\tp_{k+1}\|.
\end{equation*}
By \cite[Theorem 3.8, Claim 1]{Na2020Exponential} and \cite[Lemma 4.3]{Na2020Exponential}, we know $\bar{Q}_{k+1}$ (which is $\bar{Q}_{k+1}(\delta)$ in their context) satisfies $\|\bar{Q}_{k+1}\|\leq \Upsilon_Q$ for some constant $\Upsilon_Q$. Thus, by Theorem \ref{thm:primal:sensitivity}, we can let $\Upsilon'' =\Upsilon' + 2\Upsilon_Q\Upsilon$ to complete the proof.

\ifCLASSOPTIONcaptionsoff
  \newpage
\fi

\bibliographystyle{IEEEtran}
\bibliography{ref}

\begin{IEEEbiographynophoto}{Sen Na}
is a fifth-year Ph.D. student in the Department of Statistics at the University of Chicago under the supervision of Mihai Anitescu and Mladen Kolar. Before coming to UChicago, he received B.S. degree in mathematics from Nanjing University, China. His research interests lie in nonlinear dynamic programming, high-dimensional statistics, semiparametric modeling, and their interface. He is also serving as a reviewer of the SIAM Journal on Optimization, and Journal of Machine Learning Research.
\end{IEEEbiographynophoto}
\vspace{-0.3in}
\begin{IEEEbiographynophoto}{Sungho Shin}
is a Ph.D. candidate in the Department of Chemical and Biological Engineering at the University of Wisconsin-Madison. He received his B.S. in chemical engineering and mathematics from Seoul National University, South Korea, in 2016. His research interests include control theory and optimization algorithms for complex networks.
\end{IEEEbiographynophoto}
\vspace{-0.3in}
\begin{IEEEbiographynophoto}{Mihai Anitescu}
is a senior computational mathematician in the Mathematics and Computer Science Division at Argonne National Laboratory and a professor in the Department of Statistics at the University of Chicago. He obtained his engineer diploma (electrical engineering) from the Polytechnic University of Bucharest in 1992 and his Ph.D. in applied mathematical and computational sciences from the University of Iowa in 1997. He specializes in the areas of numerical optimization, computational science, numerical analysis, and uncertainty quantification. He is on the editorial board of the SIAM Journal on Optimization, and he is a senior editor for Optimization Methods and Software. He is a past member of the editorial boards of Mathematical Programming A and B,  SIAM Journal on Scientiﬁc Computing, and SIAM/ASA Journal in Uncertainty Quantification.
\end{IEEEbiographynophoto}
\vspace{-0.3in}
\begin{IEEEbiographynophoto}{Victor M. Zavala}
is the Baldovin-DaPra Associate Professor in the Department of Chemical and Biological Engineering at the University of Wisconsin-Madison. He holds a B.Sc. degree from Universidad Iberoamericana and a Ph.D. degree from Carnegie Mellon University, both in chemical engineering. He is an associate editor for the Journal of Process Control and for IEEE Transactions on Control and Systems Technology.  He is also a technical editor of Mathematical Programming Computation. His research interests are in the areas of energy systems, high-performance computing, stochastic programming, and predictive control.
\end{IEEEbiographynophoto}

\begin{flushright}
	\scriptsize \framebox{\parbox{3.4in}{Government License: The
			submitted manuscript has been created by UChicago Argonne,
			LLC, Operator of Argonne National Laboratory (``Argonne").
			Argonne, a U.S. Department of Energy Office of Science
			laboratory, is operated under Contract
			No. DE-AC02-06CH11357.  The U.S. Government retains for
			itself, and others acting on its behalf, a paid-up
			nonexclusive, irrevocable worldwide license in said
			article to reproduce, prepare derivative works, distribute
			copies to the public, and perform publicly and display
			publicly, by or on behalf of the Government. The Department of Energy will provide public access to these results of federally sponsored research in accordance with the DOE Public Access Plan. http://energy.gov/downloads/doe-public-access-plan. }}
	\normalsize
\end{flushright}

\end{document}